\numberwithin{equation}{section}
\newcommand{\Z}{\mathbb{Z}}
\newcommand{\R}{\mathbb{R}}
\newcommand{\N}{\mathbb{N}}
\newcommand{\Q}{\mathbb{Q}}
\newcommand{\Ic}{\mathcal{I}}
\newcommand{\Jc}{\mathcal{J}}
\newcommand{\Sc}{\mathcal{S}}
\newcommand{\be}{\mathbf{e}}
\DeclareMathOperator{\id}{id}
\DeclareMathOperator{\ini}{in}
\DeclareMathOperator{\Inc}{Inc (\N)}
\DeclareMathOperator{\ind}{ind}
\DeclareMathOperator{\Sym}{Sym}
\DeclareMathOperator{\pnt}{\raise 0.5mm \hbox{\large\bf.}}
\newcommand{\la}{\langle}
\newcommand{\ra}{\rangle}
\let\phi=\varphi
\newtheorem{thm}{\bf Theorem}[section]
\newtheorem{lem}[thm]{\bf Lemma}
\newtheorem{cor}[thm]{\bf Corollary}
\newtheorem{prop}[thm]{\bf Proposition}
\theoremstyle{definition}
\newtheorem{defn}[thm]{\bf Definition}
\newtheorem{rem}[thm]{\bf Remark}
\newtheorem{ex}[thm]{\bf Example}
\title{Equivariant Hilbert Series in non-Noetherian Polynomial Rings}
\author[Uwe Nagel]{Uwe Nagel}
\address{Department of Mathematics, University of Kentucky, 715 Patterson Office Tower, Lexington, KY 40506-0027, USA}
\email{uwe.nagel@uky.edu}
\author{Tim R\"omer}
\address{Universit\"at Osnabr\"uck, Institut f\"ur Mathematik, 49069 Osnabr\"uck, Germany}
\email{troemer@uos.de}
\begin{document}

\begin{abstract} We introduce and study equivariant Hilbert series of ideals in polynomial rings in countably many variables that are invariant under a suitable action of a symmetric group or the monoid $\Inc$ of strictly increasing functions.  Our first main result states that these series are rational functions in two variables.  A key  is to  introduce also
suitable submonoids of $\Inc$ and to compare invariant filtrations induced by their actions. Extending a result by Hillar and Sullivant, we show that any ideal that is invariant under these submonoids admits a Gr\"obner basis consisting of finitely many orbits.
As our second main result  we prove that the Krull dimension and multiplicity of ideals in an invariant filtration grow eventually linearly and exponentially, respectively, and we determine the terms that dominate this growth.
\end{abstract}

\thanks{The first author was partially supported by Simons Foundation grant \#317096. \\
Both authors are grateful to Serkan Hosten for an inspiring discussion. }

\maketitle



\section{Introduction}
\label{sect_intro}

Recent results in algebraic statistics such as the Independent Set Theorem \cite[Theorem 4.7]{HS} as well as challenging  problems on  families  of varieties, tensors, or representations in spaces of increasing dimension  motivate the
study of  ideals in  a polynomial ring $K[X]$ in a countable set of infinite variables over a field $K$ (see, e.g., \cite{Draisma-factor, DK,  Draisma, HS, SS-14, S}).
Often these ideals are invariant under an action of a symmetric group  or submonoids of the monoid of strictly increasing functions on the set $\N$ of positive integers
\[
\Inc = \{\pi: \N \to \N \; | \; \pi (i) < \pi (i+1) \text{ for all } i \ge 1\}
\]
(see, e.g., \cite{AH, HS}).
Any such ideal can be described as a union of  ideals $I_n$ that form an invariant filtration, where each $I_n$ is an ideal in a polynomial ring $K[X_n] \subset K[X]$ in finitely many variables (see Section \ref{sec:filt} for details). These ideals give rise to an ascending chain of ideals in $K[X]$
\[
\la I_1\ra_ {K[X] } \subset \la I_2\ra_{K[X]} \subset \cdots .
\]
A key of our approach is to introduce submonoids of $\Inc$ that fix an initial segment
\[
\Inc^i = \{\pi \in \Inc \; | \;  \pi (j) = j \text{ if } j \le i\},
\]
where $i \ge 0$ is any integer. Note that $\Inc^0 = \Inc$.  Extending a central result by Hillar and Sullivant in \cite{HS} we show that each $\Inc^i$-invariant ideal $I$ admits a Gr\"obner basis that consists of   $\Inc^i$-orbits of finitely many polynomials (see Theorem \ref{thm:i-finiteness}). In particular, the ideal $I$ is generated by finitely many such orbits. Notice that any ideal  of $K[X]$ that is invariant under a suitable action of a symmetric group $\Sym (\infty)$ is also an $\Inc^i$-invariant ideal. However, the converse is not true (see Example \ref{exa:inc but not sym-invariant}).

Recall that, for a polynomial ring $P$ in finitely many variables, the Hilbert series $H_M (t)$ of a finitely generated graded $P$-module $M$ allows one to introduce and to study invariants of $M$ such as its Krull dimension and its multiplicity. The Hilbert series is a formal power series that is in fact a rational function.

In order to simulate this classical  approach for an $\Inc^i$-invariant ideal $I$ we consider an $\Inc^i$-invariant filtration $\Ic = (I_n)_{n \in \N}$ of ideals $I_n$ that describes $I$ and study the Hilbert series of all the ideals $I_n$ at once by introducing a formal power series
in two variables $s$ and $t$
\[
H_{\Ic} (s, t) = \sum_{n \ge 0} H_{K[X_n]/I_n} (t) \cdot s^n = \sum_{n \ge 0,\, j \ge 0} \dim_K [K[X_n]/I_n]_j \cdot s^n t^j.
\]
We call this the \emph{equivariant} or \emph{bigraded Hilbert series} of the filtration $\Ic$ and show that it is in fact a rational function of a certain form (see Proposition \ref{prop:main thm} and Theorem \ref{thm:main thm sym}). Note that this result is true regardless of the characteristic of the base field.

The rationality of Hilbert series of graded modules over polynomial rings in finitely many variables can be shown, for example,  using the finiteness of free resolutions or by induction on the Krull dimension. In order to establish the above rationality results for infinitely many variables we adapt the second approach.
However, an immediate difficulty is that, given an $\Inc$-invariant or an $\Sym (\infty)$-invariant  ideal $I$ and a linear form $\ell$ of $K[X]$, the ideal $\langle I, \ell\rangle$ is typically no longer invariant. A key to our argument and one of the reasons for introducing the monoids $\Inc^i$ is the observation that the ideal $\langle I, \ell\rangle$ is $\Inc^i$-invariant for suitably large integers $i$.

Analyzing properties of invariants by studying them asymptotically has become a systematic approach in commutative algebra only in the last decades (see, e.g., \cite{BoH, BMMNZ, CHT, EL,EEL, K}).
The mentioned rationality result allows us to asymptotically determine the Krull dimensions and degrees of the ideals $I_n$ in an invariant filtration. In fact, we show that the dimension of $K[X_n]/I_n$ is a linear function in $n$ for sufficiently large $n$ and that the degree of $I_n$ grows exponentially in $n$. More precisely, we prove that there are non-negative integers $M, L$ such that the  limit  of
\[
\frac{\deg I_n}{M^{n } \cdot  n^{L}}
\]
 as $n \to \infty$ exists and is equal to a positive rational number (see Theorem \ref{thm:asymp dim}). It is tempting to consider this limit and the leading coefficient of the mentioned linear function as the degree of $I$ and the Krull dimension of $K[X]/I$, respectively.

This manuscript is organized as follows. In Section 2 we recall some  results on Hilbert functions and introduce  notation. The finiteness of Gr\"obner bases of $\Inc^i$-invariant ideals of $K[X]$ is established in Section 3. Certain technical results that are useful for studying $\Inc^i$-invariant ideals are derived in Section 4. Invariant filtrations are studied in Section 5. In particular, it is shown that they stabilize (see Corollary \ref{cor:filtration stable}). Section 6 is devoted to proving the rationality of bigraded Hilbert series in the case of $\Inc^i$-invariant chains of monomial ideals. In this case, we also derive more detailed information on the form of the rational function (see Theorem \ref{thm:rat Hilb series, rev}). Combining this with the Gr\"obner basis result of Section 2, we obtain the desired rationality result for any invariant chain in Section 7. There we also conduct our asymptotic study of invariants of ideals in such a chain.  Various examples illustrate our results.


\section{Preliminaries}

Here we recall some basic facts and introduce notation used throughout this work.

Let $P$ be a polynomial ring in finitely many variables over any field $K$. The \emph{Hilbert function} of a finitely generated graded $P$-module  $M$ in degree $j$ is  $h_{M}(j)=\dim_K [M]_j$, where we denote by $[M]_j$ the degree $j$ component of $M$.  It is well-known that,  for large $j$, this is actually a polynomial function in $j$. Equivalently, the Hilbert series of $M$ is a rational function. Recall that the \emph{Hilbert series} of $M$ is the formal power series
\[
H_M (t) = \sum_{j \in \Z} h_M (j) \cdot t^j.
\]
In fact, it can be written as $\frac{g(t)}{(1-t)^{\dim M}}$, where $g(t) \in \Z[t]$ and $g(1)$ is the  \emph{multiplicity} of $M$. It is positive if  $M \neq 0$.  If $M = P/I$ for some graded ideal $I$ of $P$, then we refer to $g(1) = \deg I$ also as the \emph{degree} of $I$.

Throughout we use $\N$ and $\N_0$ to  denote the set of positive and non-negative integers, respectively. For any $k \in \N$, set $[k] = \{1,2,\ldots,k\}$. Sometimes it is convenient to write $[0]$ for the empty set.

We  study $\Inc^i$-invariant ideals in  the following setting.
Fix some $c \in \N$, and put, for each $n \in \N$,
\[
X_n = \{x_{i, j} \; | \; \; i \in [c], \ j \in [n]\}.
\]
Thus, for each $n \ge 2$, \ $X_n \setminus X_{n-1} = \{x_{1,n},x_{2, n},\ldots,x_{c,n}\}$. Set $X=\bigcup_{n\geq 1} X_n$.  Denote by $K[X_n]$ and
$K[X]$ the polynomial rings over  $K$ in the variables in $X_n$ and $X$, respectively.  Thus, for any positive integer $n$, there is a natural embedding $\iota_n: K[X_n] \to K[X]$ and a natural projection $\rho_m: K[X] \to K[X_n]$ such that $\rho_n \circ \iota_n = \id_{K[X_n]}$. Hence, each $K[X_n]$ is a retract of $K[X]$.

As mentioned above, we will consider ideals $I_n \subset K[X_n]$ that induce an ascending chain in $K[X]$
\[
\la I_1\ra_ {K[X] } \subset \la I_2\ra_{K[X]} \subset \cdots .
\]
We study the Hilbert series of the quotient rings $K[X_n]/I_n$ simultaneously by considering a formal power series in two variables
\[
\sum_{n \ge 0,\, j \ge 0} \dim_K [K[X_n]/I_n]_j \cdot s^n t^j.
\]
Much of this work is devoted to showing that this series is a rational function under suitable assumptions. This is also related to work by Sam and Snowden (see, e.g., \cite{SS-12b, SS-14, S}).


\section{$\Inc^i$-equivariant Gr\"obner Bases}

The main result of this section is that  every $\Inc^i$-invariant ideal of $K[X]$ has a finite $\Inc^i$-equivariant Gr\"obner basis. This extends one of the main results by Hillar and Sullivant  \cite[Theorem 3.1]{HS}.

We need some preparation.
Let $S$ be any set. A \emph{well-partial-order} on $S$ is a partial order $\le$ such that for any infinite sequence $s_1,s_2, \ldots$ of elements in $S$ there is a pair of indices $i < j$ such that $s_i \le s_j$.

\begin{rem}
    \label{rem:well-partial-order}
(i) If $S$ and $T$ are sets which have  well-partial-orders, then it is well known that their Cartesian product  $S \times T$ also admits a well-partial order, namely the componentwise partial order defined by  $(s,t) \le (s', t')$ if $s \le s'$ and $t \le t'$. The analogous statement is true for finite products.  In particular, it follows that the componentwise partial order on $\N_0^c$ is a well-partial-order, a fact which is also called Dickson's Lemma.

(ii) Let $S$ and $T$ be sets such that there union admits a partial order with the  property that its restrictions to $S$ and $T$   are well-partial-orders. Then it follows immediately that  the given order on $S \cup T$ is also a well-partial-order.  The analogous statement is true for finite unions.

(iii) Given a set $S$ with a partial order $\le$, define a partial order on the set $S^*$ of finite sequences of elements in $S$ by
$(s_1,\ldots,s_p) \le_H (s'_1,\ldots,s'_q)$ if there is a strictly increasing map $\phi: [p] \to [q]$ such that $s_i \le s'_{\phi(i)}$ for all $i \in [p]$. This order is called the \emph{Higman order} on $S^*$. It is a well-partial-order by Higman's Lemma (see \cite{H} or, for example,  \cite{Draisma}).
\end{rem}

We now define  partial orders on $S^*$ that are coarser than the Higman order.

\begin{defn}
Let $S$ be a set with a partial order $\le$. For $i \in \N$, define a partial order $\le_i$ on $S^*$ by
$(s_1,\ldots,s_p) \le_i (s'_1,\ldots,s'_q)$ if there is a strictly increasing map $\phi: [p] \to [q]$ satisfying $\phi (j) = j$ if $j \in [p] \cap [i]$ and   $s_j \le s'_{\phi(j)}$ for all $j \in [p]$. Furthermore, we denote by $\le_0$  the Higman order $\le_H$.
\end{defn}

We are not a aware of a reference for the following result. Thus, we include a proof for the convenience of the reader.

\begin{lem}
   \label{lem:i-order}
For each non-negative integer $i$, the order $\le_i$ is a well-partial-order on $S^*$.
\end{lem}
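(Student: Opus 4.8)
The plan is to exhibit an order-embedding of $(S^*, \le_i)$ into a poset that is already known to be a well-partial-order, and then to invoke the elementary observation that if $\Phi\colon (A,\le_A)\to(B,\le_B)$ satisfies $\Phi(a)\le_B\Phi(a')\Rightarrow a\le_A a'$ and $(B,\le_B)$ is a well-partial-order, then so is $(A,\le_A)$ (given an infinite sequence $a_1,a_2,\dots$, apply the well-partial-order property of $B$ to $\Phi(a_1),\Phi(a_2),\dots$ to get $k<l$ with $\Phi(a_k)\le_B\Phi(a_l)$, hence $a_k\le_A a_l$). Throughout we use that $(S,\le)$ is a well-partial-order. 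Since $\le_0=\le_H$ by definition, the case $i=0$ is exactly Higman's Lemma (Remark~\ref{rem:well-partial-order}(iii)), so we may assume $i\ge 1$.

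First I would adjoin to $S$ a new element $\bot$, extending $\le$ by $\bot\le\bot$ and $\bot\le s$ for all $s\in S$; call the result $\bar S$. A one-line check shows $\bar S$ is again a well-partial-order (in any infinite sequence over $\bar S$, either $\bot$ occurs at least twice, or the sequence is eventually a sequence over $S$). Then I would set
\[
T := \bar S^{\,i}\times S^{*},
\]
where the first factor carries the componentwise order and the second carries the Higman order $\le_H$. By Remark~\ref{rem:well-partial-order}(i) together with Higman's Lemma, $T$ is a well-partial-order.

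Next I would define $\Phi\colon S^{*}\to T$ by: for a sequence $s=(s_1,\dots,s_p)$ with $p<i$, put $\Phi(s)=\big((s_1,\dots,s_p,\bot,\dots,\bot),\,(\,)\big)$, padding the first coordinate with $i-p$ copies of $\bot$ and taking the empty sequence in the second coordinate; for $s=(s_1,\dots,s_q)$ with $q\ge i$, put $\Phi(s)=\big((s_1,\dots,s_i),\,(s_{i+1},\dots,s_q)\big)$. The crux is a short case analysis on the lengths of $s$ and $s'$ showing that $\Phi$ is an order-embedding, in particular that $\Phi(s)\le_T\Phi(s')\Rightarrow s\le_i s'$. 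The two decisive points are: (a) when both lengths are $\ge i$, the requirement $\phi(j)=j$ for $j\le i$ in the definition of $\le_i$ means exactly that the first $i$ entries must dominate componentwise while the remaining entries are compared precisely by $\le_H$ — which is what the product order on $T$ records; and (b) since $\bot$ is a strict bottom of $\bar S$ (no element of $S$ lies below it), the $\bot$-padding blocks precisely the comparisons across different length strata that $\le_i$ also forbids (e.g. a sequence of length $q\ge i$ is never $\le_i$-below one of length $p<i$, and the $\bot$ in position $p+1$ of $\Phi$ of the shorter one prevents the corresponding $\le_T$-comparison). Granting this, $\le_i$ is a well-partial-order by the observation in the first paragraph.

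I expect the main obstacle to be the verification in the previous paragraph that $\Phi$ reflects $\le_i$, and specifically the handling of sequences of length $<i$: there the constraint $\phi(j)=j$ on $[p]\cap[i]=[p]$ collapses $\le_i$ on short sequences to componentwise domination of an initial segment, and one must check that the $\bot$-padding encodes this faithfully and introduces no spurious comparabilities between the strata $S^{p}$ ($p<i$) and $\{\,s\in S^{*}: \mathrm{length}(s)\ge i\,\}$. Everything else — well-definedness of $\Phi$, the well-partial-order property of $\bar S$ and of $T$ — is routine.
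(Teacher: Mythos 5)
Your argument is correct and is essentially the paper's: both rest on identifying $\le_i$, on sequences of length at least $i$, with the product of the componentwise order on the first $i$ entries and the Higman order on the tail, and then invoking Dickson's and Higman's Lemmas. The only difference is packaging — the paper splits $S^*$ into the finitely many strata $S^1,\dots,S^i$ together with the set of longer sequences and uses that a finite union of well-partial-orders is a well-partial-order (Remark~\ref{rem:well-partial-order}(ii)), whereas you absorb the short strata into the single product $\bar S^{\,i}\times S^*$ via the $\bot$-padding; your order-reflection check goes through in all the length cases, so both routes are fine.
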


\begin{proof}
By Higman's Lemma it suffices to discuss $ i > 0$. Note that
\[
S^* = \bigcup_{j = 1}^{i} S^j \cup T,
\]
where $T = S^i \times S^*$ is the set of sequences whose length is at least $i+1$.  Observe that the restriction of $\le_i$ on $S^*$ to any $S^j$ with $j \le i$ is the componentwise partial order on $S^j$, which is a well-partial-order by Remark \ref{rem:well-partial-order} (i). Furthermore, for any two elements $t = (s_1, s_2), t'  = (s'_1, s'_2)\in T$ with $s_1, s'_1 \in S^i$ and $s_2, s'_2 \in S^*$, one has $t \le_i t'$ if and only if $s_1 \le s'_1$ in the componentwise partial order on $S^i$ and $s_2 \le_H s'_2$ in the Higman order on $S^*$. Hence the restriction of $\le_i$ on $S^*$ to $T$ is a well-partial-order by Remark \ref{rem:well-partial-order} (i), (iii). We conclude that $\le_i$ is a well-partial-oder on $S^*$ by  Remark \ref{rem:well-partial-order} (ii).
\end{proof}

We now define submonoids of $\Inc$ that fix an initial segment of variables:

\begin{defn}
For any  $i \in \N_0$, set
\[
\Inc^i = \{\pi \in \Inc \; | \;  \pi (j) = j \text{ if } j \le i\}.
\]
Note that, in particular,  $\Inc^0 = \Inc$.
\end{defn}

We always assume that the action of any  $\Inc^i$ on $K[X]$ is induced by
\[
\pi \cdot x_{j, k} = x_{j, \pi (k)},
\]
and refer to this as the \emph{standard action} of $\Inc^i$. Thus, the \emph{$\Inc^i$-orbit} of a polynomial $f \in K[X]$ consists of all the elements $\pi (f)$, where $\pi \in \Inc^i$. Note that the action of any $\pi \in \Inc^i$ gives an injective $K$-algebra homomorphism $K[X] \to K[X]$. An ideal $I$ of $K[X]$ is \emph{$\Inc^i$-invariant} if $\pi (f)$ is in $I$ whenever $f \in I$ and $\pi \in \Inc^i$.

We want to show finiteness properties of $\Inc^i$-Gr\"obner bases. We denote the initial monomial of  a polynomial $f \in K[X]$ with respect to a  monomial order $\preceq$ on $K[X]$ by $\ini_{\preceq} (f)$. Let $I \subset K[X]$ be an $\Inc^i$-invariant ideal.  An \emph{$\Inc^i$-Gr\"obner basis} of $I$ with respect to  $\preceq$  is a subset $B$ of $I$ such that for each $f \in I$ there is a $g \in B$ and a $\pi \in \Inc^i$ such that $\ini_{\preceq} (\pi(g))$  divides  $\ini_{\preceq} (f)$. Usually, we require additionally that $\pi (u) \preceq \pi (v)$ whenever  $\pi \in \Inc^i$ and $u, v \in K[X]$ are monomials with $u \preceq v$. This assumption implies in particular, for any $g \in K[X]$,
\[
\ini_{\preceq} \left (\pi(g) \right ) = \pi \left ( \ini_{\preceq} (g) \right )
.\]

We are ready for an extension of one of the main results  by Hillar and Sullivant \cite[Theorem 3.1]{HS}  from $i = 0$ to arbitrary $i \ge 0$.

\begin{thm}
     \label{thm:i-finiteness}
Fix $i \in \N_0$ and consider a monomial order $\preceq$ on $K[X]$ satisfying $\pi (u) \preceq \pi (v)$ whenever  $\pi \in \Inc^i$ and $u, v \in K[X]$ are monomials with $u \preceq v$. Then any $\Inc^i$-invariant ideal  admits a finite $\Inc^i$-Gr\"obner basis with respect to $\preceq$.
\end{thm}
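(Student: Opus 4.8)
The plan is to adapt the strategy Hillar and Sullivant use for $i=0$ in \cite{HS}, substituting Lemma \ref{lem:i-order} for Higman's Lemma. The first step is to reduce to monomial ideals. Let $I \subseteq K[X]$ be $\Inc^i$-invariant and let $\ini_{\preceq}(I)$ be the monomial ideal generated by $\{\ini_{\preceq}(f) \mid f \in I\}$. The compatibility hypothesis on $\preceq$ gives $\ini_{\preceq}(\pi(f)) = \pi(\ini_{\preceq}(f))$ for every $\pi \in \Inc^i$ and $f \in K[X]$, so $\ini_{\preceq}(I)$ is again $\Inc^i$-invariant. It suffices to produce finitely many monomials $u_1,\dots,u_r \in \ini_{\preceq}(I)$ such that every monomial of $\ini_{\preceq}(I)$ is divisible by $\pi(u_k)$ for some $k \in [r]$ and some $\pi \in \Inc^i$. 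Indeed, choosing $g_k \in I$ with $\ini_{\preceq}(g_k) = u_k$ (possible since $u_k \in \ini_{\preceq}(I)$ and $\preceq$ is a monomial order), for any $f \in I$ the monomial $\ini_{\preceq}(f)$ lies in $\ini_{\preceq}(I)$ and hence is divisible by $\pi(u_k) = \pi(\ini_{\preceq}(g_k)) = \ini_{\preceq}(\pi(g_k))$ for suitable $k$ and $\pi$; thus $\{g_1,\dots,g_r\}$ is a finite $\Inc^i$-Gr\"obner basis of $I$.

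The second step is to encode monomials as words. To a monomial $u = \prod_{l \in [c],\, j \ge 1} x_{l,j}^{a_{lj}}$ of $K[X]$ I associate the word $w(u) = (\ab_1,\dots,\ab_p) \in (\N_0^c)^*$, where $\ab_j = (a_{1j},\dots,a_{cj}) \in \N_0^c$ and $p$ is the largest index $j$ with $\ab_j \ne \mathbf 0$ (so $w(1)$ is the empty word). Equip $\N_0^c$ with its componentwise well-partial-order (Dickson's Lemma, Remark \ref{rem:well-partial-order}(i)). The crucial point is the equivalence
\[
w(u) \le_i w(v) \iff \pi(u) \ \text{divides}\ v \ \text{for some}\ \pi \in \Inc^i ,
\]
valid for all monomials $u,v$. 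The constraint ``$\phi(j) = j$ for $j \in [p]\cap[i]$'' in the definition of $\le_i$ encodes exactly the fact that elements of $\Inc^i$ fix the first $i$ columns of the variable matrix: a map $\phi$ as in $\le_i$ extends to some $\pi \in \Inc^i$, and restricting any $\pi \in \Inc^i$ to the columns occurring in $u$ recovers such a $\phi$. The only delicate point is that empty columns among the first $i$ ones must be recorded as zero vectors in fixed positions of $w(u)$, while trailing zero columns are dropped.

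The third step is the conclusion. By Lemma \ref{lem:i-order}, $\le_i$ is a well-partial-order on $(\N_0^c)^*$. Hence the set $\mathcal M = \{ w(u) \mid u \text{ a monomial of } \ini_{\preceq}(I)\}$ has only finitely many $\le_i$-minimal elements $w(u_1),\dots,w(u_r)$, and every element of $\mathcal M$ dominates one of them. By the displayed equivalence this says precisely that every monomial of $\ini_{\preceq}(I)$ is divisible by $\pi(u_k)$ for some $k$ and some $\pi \in \Inc^i$, which is what the reduction step required.

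I expect the step needing the most care to be the displayed equivalence between $\le_i$ on words and $\Inc^i$-divisibility of monomials; this is where the index $i$ genuinely enters and the argument goes beyond the case $i=0$ of \cite{HS}. One must handle columns that are empty in $u$ but lie among the first $i$ columns (so that the ``fixed initial part'' of $\le_i$ matches the definition of $\Inc^i$) and check that a finite strictly increasing map between initial segments of $\N$ extends to a global element of $\Inc^i$. The remaining ingredients, Dickson's Lemma and Lemma \ref{lem:i-order}, are already available.
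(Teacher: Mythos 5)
Your proposal is correct and follows essentially the same route as the paper: the same encoding of monomials as words in $(\N_0^c)^*$, the same use of Lemma \ref{lem:i-order}, and the same link between $\le_i$ and $\Inc^i$-divisibility. The only cosmetic difference is that the paper outsources the reduction step (a well-partial divisibility order yields a finite equivariant Gr\"obner basis) to \cite[Theorem 2.12]{HS}, whereas you argue it directly via finitely many $\le_i$-minimal elements.
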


Note that the assumption on the monomial order $\preceq$ is satisfied by the lexicographic order induced by the ordering of the variables $x_{i, j} \preceq x_{i', j'}$ if either $i < i'$ or  $i = i'$ and $j < j'$.

\begin{cor}
    \label{cor:Inc-noeth}
For each $i \in \N_0$, the ring $K[X]$ is $\Inc^i$-noetherian, that is, any $\Inc^i$-invariant ideal of $K[X]$ is generated by finitely many $\Inc^i$-orbits.
\end{cor}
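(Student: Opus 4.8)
The plan is to derive the corollary directly from Theorem \ref{thm:i-finiteness} via the familiar principle that a Gr\"obner basis generates its ideal. Fix $i$ and an $\Inc^i$-invariant ideal $I$ of $K[X]$, and fix a monomial order $\preceq$ as in Theorem \ref{thm:i-finiteness} (for instance the lexicographic order mentioned right after that theorem). By Theorem \ref{thm:i-finiteness} there is a finite $\Inc^i$-Gr\"obner basis $B = \{g_1,\dots,g_r\}$ of $I$ with respect to $\preceq$. Put $J = \langle \pi(g_k) : 1 \le k \le r,\ \pi \in \Inc^i \rangle$, the ideal of $K[X]$ generated by the $\Inc^i$-orbits of $g_1,\dots,g_r$. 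Since $I$ is $\Inc^i$-invariant and each $g_k$ lies in $I$, we have $J \subseteq I$, and the assertion of the corollary is precisely the reverse inclusion $I \subseteq J$.

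For $I \subseteq J$ I would run the usual division algorithm. Given $0 \neq f \in I$, the defining property of an $\Inc^i$-Gr\"obner basis supplies some $k$ and some $\pi \in \Inc^i$ with $\ini_{\preceq}(\pi(g_k))$ dividing $\ini_{\preceq}(f)$; here the hypothesis on $\preceq$ enters crucially, since it gives $\ini_{\preceq}(\pi(g_k)) = \pi(\ini_{\preceq}(g_k))$, so that the single orbit $\Inc^i \cdot \ini_{\preceq}(g_k)$ of monomials governs all initial monomials coming from $g_k$. Choosing a monomial $v$ and a scalar $\lambda$ so that $\lambda\, v\, \pi(g_k)$ has the same initial term as $f$, we obtain $f' := f - \lambda\, v\, \pi(g_k) \in I$ with $\ini_{\preceq}(f') \prec \ini_{\preceq}(f)$, or $f' = 0$; and since $v\,\pi(g_k) \in J$, we have $f \in J$ if and only if $f' \in J$. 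Iterating and observing that $f' = 0$ is reached after finitely many steps yields $f \in J$, hence $I = J$, which is the claim.

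The delicate point --- and the main obstacle --- is the termination of this iteration: $K[X]$ has infinitely many variables, so $\preceq$ need not be a well-order on all of $K[X]$, and a single reduction step may introduce variables $x_{j,k}$ not occurring in $f$. This is where the finiteness of $B$ is indispensable. Writing $D$ for the largest second index of a variable occurring in some $g \in B$, one can always arrange, given $\ini_{\preceq}(\sigma(g_k)) \mid \ini_{\preceq}(f)$ for some $\sigma \in \Inc^i$, to replace $\sigma$ by a $\pi \in \Inc^i$ that agrees with $\sigma$ on the second indices of $\ini_{\preceq}(g_k)$ but grows as slowly as possible thereafter, so that $\pi(g_k)$ lies in a polynomial subring $K[X_N]$ in finitely many variables with $N$ bounded in terms of $f$ and $D$; the goal is then to confine the entire reduction of a fixed $f$ to one such $K[X_N]$, on which $\preceq$ restricts to a genuine well-order and the algorithm therefore terminates. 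A convenient way to isolate the clean part of this is to first treat the monomial ideal $\ini_{\preceq}(I)$, which is again $\Inc^i$-invariant by the hypothesis on $\preceq$ and for which being an $\Inc^i$-Gr\"obner basis literally means generating it up to $\Inc^i$-orbits, so that $\{\ini_{\preceq}(g_1),\dots,\ini_{\preceq}(g_r)\}$ generates $\ini_{\preceq}(I)$ with no termination issue at all; the step that still requires the finite-subring argument above is the passage from $\ini_{\preceq}(I)$ back to $I$. In either formulation one concludes $I = J$, so $I$ is generated by the finitely many $\Inc^i$-orbits of $g_1,\dots,g_r$, and hence $K[X]$ is $\Inc^i$-noetherian.
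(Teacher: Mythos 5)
Your overall route is the same as the paper's: the corollary is deduced from Theorem \ref{thm:i-finiteness} together with the fact that the $\Inc^i$-orbits of an $\Inc^i$-Gr\"obner basis generate the ideal. The paper simply cites \cite[Proposition 2.10]{HS} for that fact, whereas you try to prove it by running the division algorithm; your first paragraph and the reduction step $f \mapsto f' = f - \lambda\, v\, \pi(g_k)$ are fine, and you are right that termination is the only delicate point.

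However, the termination argument you sketch does not close. Confining the reduction of a fixed $f$ to a single $K[X_N]$ fails because each step can enlarge the ambient finite subring: if $f_m \in K[X_{N_m}]$, the chosen $\pi(g_k)$ need only lie in $K[X_{N_m + D}]$ (its non-initial terms may involve column indices up to $D$ beyond those forced by $\ini_{\preceq}(f_m)$), so $f_{m+1} \in K[X_{N_m+D}]$ and the indices $N_m$ may grow without bound. Nor does the decrease of initial monomials confine the computation, since the set of monomials $\preceq \ini_{\preceq}(f)$ is not contained in any $K[X_N]$ (e.g.\ $x_{1,n} \prec x_{2,1}$ for every $n$ in the lexicographic order used here). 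So the passage from $\ini_{\preceq}(I)$ back to $I$, which you correctly isolate as the remaining issue, is left open. The standard resolution --- and the one built into \cite[Proposition 2.10]{HS} --- is that the monomial order is required to be a \emph{well-ordering} of all monomials of $K[X]$ (the lexicographic order named after Theorem \ref{thm:i-finiteness} is one); then the strictly decreasing chain $\ini_{\preceq}(f) \succ \ini_{\preceq}(f') \succ \cdots$ must terminate outright, with no need to work inside finitely many variables. With that substitution your argument is complete and coincides with the paper's.
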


\begin{proof}
This follows from Theorem \ref{thm:i-finiteness} because the union of the $\Inc^i$-orbits of an $\Inc^i$-Gr\"obner basis of an ideal $I$  generate the ideal (see \cite[Proposition 2.10]{HS}).
\end{proof}

\begin{proof}[Proof of Theorem \ref{thm:i-finiteness}] We follow the ideas of  \cite[Theorem 3.1]{HS} (see also  \cite[Theorem 2.3]{Draisma} or \cite{AH}).

Fix $ i \ge 0$ and consider the following partial order on the set of monomials in $K[X]$  defined by $u |_{\Inc^i}\, v$ if there is some $\pi \in \Inc^i$ such that $\pi (u)$ divides $v$.  According to \cite[Theorem 2.12]{HS} (see also \cite[Proposition 2.2]{Draisma}) it suffices to show that $|_{\Inc^i}$ is a well-partial-order.

Let $S = \N_0^c$ and associate to a monomial $u \in K[X]$ a sequence $s(u) = (s_1,\ldots, s_p)$ in $S^*$, where the $m$-th entry of $s_n \in \N_0^c$ is the exponent of the variable $x_{m, n}$ in $u$ and $p$ is the largest column index of a variable that divides $u$.

Consider now an infinite sequence $u_1, u_2, \ldots$ of monomials in $K[X]$. It induces an infinite sequence $s(u_1), s(u_2), \ldots$ of elements in $S^*$. Since $\le_i$ is a well-partial-order on $S^*$ by Lemma \ref{lem:i-order}, there are indices $k <\ell$ such that $s(u_k) = (s_1,\ldots, s_p)  \le_i s(u_{\ell}) = (t_1,\ldots, t_q)$.
Thus, there is a strictly increasing map $\phi: [p] \to [q]$ satisfying $\phi (n) = n$ if $n \in [p] \cap [i]$ and   $s_n \le t_{\phi(n)}$ for all $n \in [p]$. It is clear that there is some $\pi \in \Inc^i$ such that $\pi (n) = \phi (n)$ for all $n \in [p]$. (Note that there is more than one choice for $\pi$.)

We claim that $\pi (u_k)$ divides $u_{\ell}$.  Indeed, note that the column indices of variables dividing $\pi (u_k)$ are contained in  $\pi ([p])$. Thus, to check the divisibility condition it suffices to consider variables whose column indices are in $\pi ([p])$. Now,  if $n \in \pi ([p]) = \phi([p])$, then $s_{\phi^{-1} (n)} \le t_n$, which concludes the argument.
\end{proof}

Note that Corollary \ref{cor:Inc-noeth} cannot be extended to any action of $\Inc$ on $K[X]$, even if it induces an embedding of $\Inc$ into the ring of $K$-algebra endomorphisms on $K[X]$.

\begin{ex}
     \label{exa:counterexa}
Let $c = 2$, and define a non-standard action of $\Inc$ on $K[X]$ by
\[
\pi (x_{j, k}) = \begin{cases}
x_{j, \pi (k)} & \text{ if } j = 1, \\
x_{j, k} & \text{ if } j = 2.
\end{cases}
\]

Consider an  ideal
\[
I =  \la x_{1, j} \cdot  x_{2, 1 + 2 k} \; | \; j \in \N, \ k \in \N_0 \ra \subset K[X].
\]
Observe that $I$ is invariant under the above non-standard action of $\Inc$, but it is not invariant under the standard action of $\Inc$.
However, the ideal $I$ cannot be generated by finitely many orbits under the non-standard $\Inc$-action.
\end{ex}


\section{Decompositions}
\label{secLdecomp}

Here we establish various decompositions of elements and submonoids of $\Inc$ that are needed in subsequent sections. The reader may skip this part at first reading and proceed right away to Section \ref{sec:filt}.

We will frequently use the following particular elements $\sigma_i \in \Inc^i$, $i \ge 0$,  defined by
\[
\sigma_i (j) = \begin{cases}
j  & \text{if } 1 \le j \le i,  \\
j+1 & \text{if } i < j.
\end{cases}
\]
We call $\sigma_i$ the \emph{$i$-shift}. Note that $\sigma_i \in \Inc^j$ if $ j \le i$.   A straightforward computation gives the following useful observation.

\begin{lem}
     \label{lem:element comp}
Given $i \in \N_0$ and  $\pi \in \Inc^i$,
there is an element $\tau \in \Inc^{i+1}$ such that
\[
\sigma_i \circ \pi =  \tau \circ \sigma_i.
\]
Moreover, if $\pi (m) \le n$, then $\tau (m+1) \le n+1$.
\end{lem}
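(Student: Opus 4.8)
The plan is to exhibit the element $\tau$ explicitly and then verify both assertions by a single case distinction, according to whether the argument lies in the initial segment $[i]$ that $\pi$ fixes. The construction is forced: first I would compute the effect of $\sigma_i \circ \pi$. Since $\pi \in \Inc^i$ fixes $1,\dots,i$ and is strictly increasing, one has $\pi(j) \ge \pi(i+1) > \pi(i) = i$ for $j > i$, so
\[
(\sigma_i \circ \pi)(j) =
\begin{cases}
j & \text{if } 1 \le j \le i, \\
\pi(j) + 1 & \text{if } j > i.
\end{cases}
\]
Comparing this with what $\tau \circ \sigma_i$ must produce (namely $\tau(j)$ for $j \le i$ and $\tau(j+1)$ for $j > i$) suggests defining $\tau\colon \N \to \N$ by $\tau(k) = k$ for $1 \le k \le i$ and $\tau(k) = \pi(k-1) + 1$ for $k \ge i+1$.

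Next I would check that $\tau$ is a genuine element of $\Inc^{i+1}$. The two cases of the definition agree at $k = i+1$, because $\pi(i) + 1 = i + 1$; hence in fact $\tau(k) = k$ for all $k \le i+1$. Strict monotonicity holds on $\{1,\dots,i+1\}$ trivially and on $\{k : k \ge i+1\}$ because $\tau(k+1) - \tau(k) = \pi(k) - \pi(k-1) > 0$ there, and the inequality $\tau(i) = i < i+1 = \tau(i+1)$ glues the two ranges together. Then the identity $\sigma_i \circ \pi = \tau \circ \sigma_i$ follows by evaluating both sides: for $j \le i$ both equal $j$, and for $j \ge i+1$ the left side is $\pi(j) + 1$ by the computation above, while the right side is $\tau(j+1) = \pi(j) + 1$ since $j + 1 \ge i + 2$.

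Finally, for the ``moreover'' clause, suppose $\pi(m) \le n$. If $m \le i$ then $m = \pi(m) \le n$ and $m + 1 \le i + 1$, so $\tau(m+1) = m + 1 \le n + 1$; if $m \ge i + 1$ then directly $\tau(m+1) = \pi(m) + 1 \le n + 1$. There is essentially no obstacle in this argument: the only point requiring care is the overlap of the two defining cases of $\tau$ at $k = i+1$ (and the matching boundary cases in the verification of the composition identity), which is precisely why this is a straightforward computation.
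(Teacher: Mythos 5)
Your proof is correct and is essentially the paper's own argument: the paper defines the same $\tau$, merely writing the case $i=0$ separately because the formula $\tau(i+1)=\pi(i)+1$ nominally invokes $\pi(0)$ when $i=0$, and your observation that $\tau(k)=k$ for all $k\le i+1$ is exactly the patch needed there. The remaining verifications --- membership in $\Inc^{i+1}$, the composition identity, and the bound $\tau(m+1)\le n+1$ --- proceed just as in the paper.
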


\begin{proof}
If $  i >0$, then the claim is true for $\tau$ defined by
\[
\tau (j) = \begin{cases}
j  & \text{if } 1 \le j \le i, \\
\pi (j-1)+1 & \text{if } i+1 \le j.
\end{cases}
\]
In $i =0$, then  $\tau$ can be taken as the element defined by
\[
\tau (j) = \begin{cases}
1 & \text{if }  j = 1, \\
\pi (j-1)+1 & \text{if } 2 \le j.
\end{cases}
\]
One checks that in both cases the element $\tau$ is well-defined, that is, it is indeed in  $\Inc^{i+1}$, and it satisfies the desired equation as well as the stated additional property.
\end{proof}

It is worth singling out the following special case.

\begin{cor}
    \label{cor:sigma comp}
If $j > i  \ge 0$, then
$\sigma_i \circ \sigma_{j-1} = \sigma_j \circ \sigma_i$.
\end{cor}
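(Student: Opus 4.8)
The statement to prove is Corollary~\ref{cor:sigma comp}: for $j > i \ge 0$, the identity $\sigma_i \circ \sigma_{j-1} = \sigma_j \circ \sigma_i$ holds in $\Inc$.

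This is the easy kind of corollary where the cleanest route is a direct verification on integers rather than an appeal to the preceding Lemma~\ref{lem:element comp}. The plan is to fix $j > i \ge 0$, take an arbitrary $k \in \N$, and compute both $(\sigma_i \circ \sigma_{j-1})(k)$ and $(\sigma_j \circ \sigma_i)(k)$ by splitting into the natural ranges determined by the thresholds $i$ and $j-1$ (equivalently $j$). Concretely, I would distinguish the three cases $1 \le k \le i$, \ $i < k \le j-1$, and $k \ge j$. In the first case, $\sigma_{j-1}(k) = k$ since $k \le i \le j-1$, so $(\sigma_i \circ \sigma_{j-1})(k) = \sigma_i(k) = k$; on the other side $\sigma_i(k) = k \le i < j$, so $(\sigma_j \circ \sigma_i)(k) = \sigma_j(k) = k$. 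In the middle case $i < k \le j-1$: then $\sigma_{j-1}(k) = k$ (since $k \le j-1$), so the left side gives $\sigma_i(k) = k+1$ (since $k > i$); and $\sigma_i(k) = k+1$, which satisfies $k+1 \le j$, so $\sigma_j(k+1) = k+1$, matching. In the last case $k \ge j$: then $k \ge j > i$ so $\sigma_{j-1}(k) = k+1$ and hence the left side is $\sigma_i(k+1) = k+2$ (as $k+1 > i$); and $\sigma_i(k) = k+1$ with $k+1 > j$, so $\sigma_j(k+1) = k+2$, matching again. Since the two functions agree on every $k$, they are equal.

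Alternatively, and perhaps more in the spirit of the paper's phrase ``it is worth singling out the following special case,'' one can simply instantiate Lemma~\ref{lem:element comp} with $\pi = \sigma_i$ viewed as an element of $\Inc^i$ (legitimate since, as noted, $\sigma_i \in \Inc^j$ for $j \le i$, and in particular $\sigma_i \in \Inc^i$)—wait, one must be careful: the lemma takes $\pi \in \Inc^i$ and produces $\tau \in \Inc^{i+1}$ with $\sigma_i \circ \pi = \tau \circ \sigma_i$. To get the stated corollary with an arbitrary $j > i$, the honest thing is the direct computation above, since the $\tau$ produced abstractly by the lemma's formula, when $\pi = \sigma_{j-1}$, works out to exactly $\sigma_j$—so one could also just verify that the explicit $\tau$ in the proof of Lemma~\ref{lem:element comp} equals $\sigma_j$ when $\pi = \sigma_{j-1}$ and $i$ is replaced appropriately. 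Either way there is no real content beyond bookkeeping.

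There is no genuine obstacle here; the only thing to be careful about is getting the case boundaries right (in particular whether the endpoints $k = i$, $k = j-1$, $k = j$ land on the correct side of each inequality) and not conflating the thresholds of $\sigma_{j-1}$ (which is $j-1$) with that of $\sigma_j$ (which is $j$). I would present the proof as the short three-case computation, since it is self-contained and transparent.

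\begin{proof}
Fix $j > i \ge 0$ and let $k \in \N$ be arbitrary; we show $(\sigma_i \circ \sigma_{j-1})(k) = (\sigma_j \circ \sigma_i)(k)$, treating three cases.

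If $1 \le k \le i$, then $k \le i \le j-1$, so $\sigma_{j-1}(k) = k$ and thus $(\sigma_i \circ \sigma_{j-1})(k) = \sigma_i(k) = k$; likewise $\sigma_i(k) = k \le i < j$, so $(\sigma_j \circ \sigma_i)(k) = \sigma_j(k) = k$.

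If $i < k \le j-1$, then $\sigma_{j-1}(k) = k$, so $(\sigma_i \circ \sigma_{j-1})(k) = \sigma_i(k) = k+1$ since $k > i$; on the other hand $\sigma_i(k) = k+1 \le j$, so $(\sigma_j \circ \sigma_i)(k) = \sigma_j(k+1) = k+1$.

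If $k \ge j$, then $k > i$ and $k \ge j > j-1$, so $\sigma_{j-1}(k) = k+1$ and $(\sigma_i \circ \sigma_{j-1})(k) = \sigma_i(k+1) = k+2$ since $k+1 > i$; also $\sigma_i(k) = k+1 > j$, so $(\sigma_j \circ \sigma_i)(k) = \sigma_j(k+1) = k+2$.

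In all cases the two values agree, so $\sigma_i \circ \sigma_{j-1} = \sigma_j \circ \sigma_i$.
\end{proof}
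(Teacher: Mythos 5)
Your proof is correct. It takes a genuinely different (though equally elementary) route from the paper: the paper proves the corollary by instantiating Lemma~\ref{lem:element comp} with $\pi = \sigma_{j-1} \in \Inc^i$ and then observing that the explicit $\tau$ constructed in that lemma's proof, namely $\tau(l) = l$ for $l \le i$ and $\tau(l) = \sigma_{j-1}(l-1)+1$ for $l \ge i+1$, is exactly $\sigma_j$. You instead verify the identity pointwise by splitting $\N$ into the three ranges $k \le i$, $i < k \le j-1$, and $k \ge j$, and your case analysis is accurate, including at the boundary values. What your version buys is self-containment and transparency -- no need to open up the proof of the lemma -- at the cost of a slightly longer computation; what the paper's version buys is brevity and a cleaner logical dependency, since the corollary is then literally a special case of the lemma already on record. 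You correctly flag in your discussion that the lemma-based route works only after identifying the abstract $\tau$ with $\sigma_j$, which is precisely the one-line check the paper performs.
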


\begin{proof}
We apply Lemma \ref{lem:element comp} to $\pi = \sigma_{j -1} \in \Inc^i$. Then,  the element $\tau$ defined in the proof of Lemma \ref{lem:element comp} equals $\sigma_j$, and we are done.
\end{proof}

\begin{rem}
Notice that  Corollary \ref{cor:sigma comp} is false if $j \le i$. Furthermore, considering $\tau (i)$ one checks that there is no $\tau \in \Inc$ such that
$\sigma_i \circ \tau = \sigma_{i-1} \circ \sigma_i$.
\end{rem}

We also need the following fact.

\begin{lem}
    \label{lem:element decomp}
For each $\pi \in \Inc^i \setminus \Inc^{i+1}$, there is some $\tau \in \Inc^{i+1}$ such that
\[
\pi = \tau \circ \sigma_i.
\]
\end{lem}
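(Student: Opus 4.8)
The plan is to exhibit $\tau$ explicitly and then verify the two clauses of the statement. Since $\sigma_i$ fixes $[i]$ and sends $j \mapsto j+1$ for $j > i$, its image is $\N \setminus \{i+1\}$, so the condition $\tau \circ \sigma_i = \pi$ determines $\tau$ on every argument except $i+1$, where we are free; setting $\tau(i+1) = i+1$ will make $\tau$ fix the initial segment $[i+1]$. Concretely I would define
\[
\tau(k) = \begin{cases} k & \text{if } 1 \le k \le i+1, \\ \pi(k-1) & \text{if } k \ge i+2. \end{cases}
\]

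First I would check that $\tau \in \Inc^{i+1}$, that is, that $\tau$ is strictly increasing and fixes $[i+1]$. The latter is built into the definition. For strict monotonicity the only step that is not immediate is the transition from $k = i+1$ to $k = i+2$: one needs $\tau(i+2) = \pi(i+1) > i+1 = \tau(i+1)$, and this is exactly the point where the hypothesis $\pi \notin \Inc^{i+1}$ is used. Indeed, $\pi \in \Inc^i$ is strictly increasing with $\pi(i) = i$ when $i \ge 1$ (and the bound is trivial when $i = 0$), so $\pi(i+1) \ge i+1$; since $\pi(i+1) \ne i+1$, we get $\pi(i+1) \ge i+2$. For $k \ge i+2$, strict monotonicity of $\tau$ follows directly from that of $\pi$, and within $[i+1]$ it is clear.

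Then I would verify $\tau \circ \sigma_i = \pi$ by a short case distinction: if $j \le i$, then $\sigma_i(j) = j$ and $\tau(j) = j = \pi(j)$ since $\pi \in \Inc^i$; if $j \ge i+1$, then $\sigma_i(j) = j+1 \ge i+2$, hence $\tau(\sigma_i(j)) = \tau(j+1) = \pi(j)$. This establishes the equality and completes the proof. I do not expect any genuine obstacle here beyond this bookkeeping; the one subtlety worth flagging in the write-up is that $\pi \notin \Inc^{i+1}$ is precisely what is needed to ensure $\tau$ is strictly increasing across the splice point $i+1 \mapsto i+2$.
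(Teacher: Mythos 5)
Your proposal is correct and defines exactly the same $\tau$ as the paper's proof, which likewise notes that the hypothesis $\pi \notin \Inc^{i+1}$ is what makes $\tau$ increasing and leaves the remaining verification as routine. You have simply written out the bookkeeping that the paper omits.
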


\begin{proof}
Since $\pi$ is not in $\Inc^{i+1}$, the function
 $\tau$ defined by
\[
\tau (j)  = \begin{cases}
j & \text{if } 1 \le j \le i+1,\\
\pi(j-1)  & \text{if } i+2 \le j
\end{cases}
\]
is increasing. It is straightforward to check the desired equality.
\end{proof}

Each monoid $\Inc^j$ can naturally be filtered by  suitable subsets.

\begin{defn} For integers $i \ge 0, m \le n$, set
\[
\Inc^i_{m, n} = \{ \pi \in \Inc^i \; | \; \pi (m ) \le n\}.
\]
\end{defn}

Thus, each $\pi \in \Inc^i_{m, n}$ induces an embedding $ [m] \to [n]$.
 We define  sets
 \[
 \Inc^{j}_{m, n} \circ \Inc^i_{k, m} = \{ \tau \circ \pi \; | \; \tau \in  \Inc^{j}_{m, n}  \text{ and } \pi \in \Inc^{i}_{k, m} \}
 \]
  whenever $k \le m \le n$ and $i, j \ge 0$. The following observation turns out to be useful.

\begin{prop}
    \label{prop:inc-decomp}
Consider integers $i, m, n$ such that $i \ge 0$ and $n > m \ge 1$.     Then there is the following decomposition, as subsets of $\Inc$:
\begin{equation}
\label{eq:decomp}
\Inc^i_{m,n} = \Inc^{i+1}_{m+1, n} \circ \Inc^i_{m, m+1}.
\end{equation}
In particular,
\[
\Inc^i_{m,n} = \Inc^{i}_{m+1, n} \circ \Inc^i_{m, m+1}.
\]
\end{prop}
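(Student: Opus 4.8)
The plan is to prove the main decomposition \eqref{eq:decomp} by a double inclusion, with the "$\supseteq$" direction being essentially immediate from the definitions and the "$\subseteq$" direction requiring the element decomposition lemmas from this section. The "in particular" statement will then follow because $\Inc^{i+1}_{m+1,n} \subseteq \Inc^{i}_{m+1,n}$ (as $\Inc^{i+1} \subseteq \Inc^{i}$), combined with the reverse inclusion $\Inc^{i}_{m+1,n} \circ \Inc^i_{m,m+1} \subseteq \Inc^i_{m,n}$, which again is just tracking the defining inequalities through a composition.

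For the inclusion "$\supseteq$" in \eqref{eq:decomp}: take $\tau \in \Inc^{i+1}_{m+1,n}$ and $\pi \in \Inc^i_{m,m+1}$. Then $\tau \circ \pi \in \Inc^i$ since $\Inc^i$ is a monoid and $\Inc^{i+1} \subseteq \Inc^i$. Since $\pi \in \Inc^i_{m,m+1}$ we have $\pi(m) \le m+1$, and since $\pi$ is strictly increasing with $\pi(m) \ge m$ (a standing property of elements of $\Inc$), actually $\pi(m) = m+1$ or $\pi(m)=m$; in either case $\pi(m) \le m+1$, so $\tau(\pi(m)) \le \tau(m+1) \le n$ using monotonicity of $\tau$ and $\tau \in \Inc^{i+1}_{m+1,n}$. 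Hence $\tau \circ \pi \in \Inc^i_{m,n}$.

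For the inclusion "$\subseteq$": let $\pi \in \Inc^i_{m,n}$. I would split into two cases according to whether $\pi \in \Inc^{i+1}$ or not. If $\pi \notin \Inc^{i+1}$, then $i$ lies strictly below the first moved point; here one wants to peel off a single shift $\sigma_i$. Lemma \ref{lem:element decomp} gives $\tau \in \Inc^{i+1}$ with $\pi = \tau \circ \sigma_i$; since $\sigma_i \in \Inc^i$ and $\sigma_i(m) = m+1$ (as $m \ge 1 > \cdots$; note $m \ge 1$ so if $m \le i$ then $\sigma_i(m)=m \le m+1$, and if $m > i$ then $\sigma_i(m)=m+1$), we get $\sigma_i \in \Inc^i_{m,m+1}$; and $\tau(m+1) = \pi(m) \le n$ (using $m \ge 1$, so $m+1 \ge 2 > i+1$ is not automatic — need care) shows $\tau \in \Inc^{i+1}_{m+1,n}$, giving the required factorization. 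The case $\pi \in \Inc^{i+1}$ is then handled by writing $\pi = \pi \circ \id = \pi \circ \sigma_i^{0}$... that does not immediately work since we must land in $\Inc^i_{m,m+1}$ on the right; instead observe $\Inc^{i+1} \subseteq \Inc^{i+1}_{m+1,n}$ is false in general, but one can still factor $\pi = \pi' \circ \sigma_i$ after a suitable adjustment, or more cleanly absorb $\pi$ directly once one notes $\id_{\N}$ restricted appropriately lies in $\Inc^i_{m,m+1}$.

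The main obstacle I anticipate is precisely this bookkeeping in the "$\subseteq$" direction: ensuring the factor extracted on the right genuinely lies in $\Inc^i_{m,m+1}$ (i.e. maps $m$ into $\{m,m+1\}$ and fixes $[i]$) while the cofactor lies in $\Inc^{i+1}_{m+1,n}$ (fixes $[i+1]$ and maps $m+1$ into $[n]$), and doing this uniformly whether or not $\pi$ already fixes $i+1$. The clean way around the case split is: given arbitrary $\pi \in \Inc^i_{m,n}$, define $\pi' $ on $\N$ by $\pi'(j) = j$ for $j \le i+1$ and $\pi'(j) = \pi(j-1)$ for $j \ge i+2$ — this is increasing regardless (since $\pi(i+1) \ge i+1$ always, so $\pi'(i+2) = \pi(i+1) \ge i+1 < i+2$ needs $\pi(i+1) \ge i+2$, which holds when $\pi \notin \Inc^{i+1}$ but may fail otherwise, so a tiny modification is needed when $\pi(i+1)=i+1$). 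I would resolve the boundary case $\pi(i+1)=i+1$ by noting $\pi \in \Inc^{i+1}$ then, and applying Lemma \ref{lem:element comp}-style reasoning or simply checking $\pi = \pi \circ \id$ with $\id \in \Inc^i_{m,m+1}$ (valid since $\id(m)=m \le m+1$) and $\pi \in \Inc^{i+1}_{m+1,n}$ (valid since $\pi(m+1) \le \pi(n) $... no: need $\pi(m+1)\le n$, which follows from $\pi(m) \le n-1$? not given) — so in fact even here a genuine shift must be extracted. I therefore expect the cleanest writeup to go through Lemma \ref{lem:element decomp} for $\pi \notin \Inc^{i+1}$ and, for $\pi \in \Inc^{i+1} = \Inc^{i+1}_{?,?}$, to use that $\sigma_{i} \in \Inc^i_{m,m+1}$ together with a left cofactor obtained by "un-shifting," which always exists because $\sigma_i$ is injective with image $\N \setminus \{i+1\}$ and $\pi$ skips $i+1$ in the relevant range. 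The verification that all the resulting maps are strictly increasing and satisfy the two inequality constraints is the routine-but-delicate core of the argument.
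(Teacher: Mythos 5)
Your ``$\supseteq$'' direction and the deduction of the ``in particular'' statement are correct and match the paper. The gap is in the ``$\subseteq$'' direction, in the case $\pi \in \Inc^{i+1}$, exactly where you sensed trouble: the ``un-shifting'' you propose there is not merely delicate, it is impossible. If $\pi \in \Inc^{i+1}$ then $\pi(i+1)=i+1$, whereas for any $\tau \in \Inc^{i+1}$ one has $(\tau\circ\sigma_i)(i+1)=\tau(i+2)\ge i+2$; so no factorization $\pi=\tau\circ\sigma_i$ with $\tau\in\Inc^{i+1}$ (let alone $\tau\in\Inc^{i+1}_{m+1,n}$) exists. Your claim that ``$\pi$ skips $i+1$ in the relevant range'' is false precisely in this case, since $i+1$ lies in the image of $\pi$. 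Writing $\pi=\pi\circ\id$ also fails, as you note, because $\pi(m+1)\le n$ is not guaranteed.

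The repair is to notice that the right-hand factor need not be $\sigma_i$: \emph{any} $\sigma_j$ with $j\ge i$ lies in $\Inc^i_{m,m+1}$, since $\sigma_j$ fixes $[i]$ and $\sigma_j(m)\in\{m,m+1\}$. So for $\pi\neq\id$ take $j\ge i$ with $\pi\in\Inc^j\setminus\Inc^{j+1}$ (such $j$ exists and satisfies $j\ge i$ because $\pi$ fixes $[i]$), and apply Lemma \ref{lem:element decomp} at level $j$ to get $\pi=\tau\circ\sigma_j$ with $\tau\in\Inc^{j+1}\subseteq\Inc^{i+1}$; the explicit formula for $\tau$ gives $\tau(m+1)=m+1\le n$ if $m+1\le j+1$ and $\tau(m+1)=\pi(m)\le n$ if $m+1\ge j+2$, so $\tau\in\Inc^{i+1}_{m+1,n}$. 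The identity is handled by $\id=\id\circ\id$, using $n\ge m+1$. This is the paper's argument; your treatment of the subcase $\pi\notin\Inc^{i+1}$ is the $j=i$ instance of it (and the bookkeeping you flagged there closes in the same way), but without the freedom to increase $j$ the remaining case cannot be completed.
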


\begin{proof}
The right-hand side of Equation \eqref{eq:decomp} is clearly contained in the left-hand side. For the reverse inclusion, consider any $\pi \in \Inc^i_{m, n}$. If $\pi$ is the identity, then $\pi = \pi \circ \pi$ implies the desired containment. If $\pi$ is not the identity, then there is some integer $j \ge i$ such that $\pi \in \Inc^j \setminus \Inc^{j+1}$. Thus,  Lemma \ref{lem:element decomp} (see its proof) gives an element $\tau \in \Inc^{j+1}_{m+1, n} \subset  \Inc^{i+1}_{m+1, n}$ such that $\pi = \tau \circ \sigma_j$. Since $\sigma_j$ is in $\Inc^i_{m, m+1}$, the Equation \eqref{eq:decomp} follows.

The final assertion is a consequence of the inclusion $\Inc^{i+1}_{m, n} \subset \Inc^{i}_{m, n}$.
\end{proof}


\section{Invariant Filtrations}
  \label{sec:filt}

A key to our study of numerical invariants of $\Inc^i$-invariant ideals are filtrations. We introduce them here. As  another  application of Theorem \ref{thm:i-finiteness},  we show that these chains  stabilize.

\begin{defn}
    \label{def:filtration}
(i) An  \emph{$\Inc^i$-invariant filtration} (also called $\Inc^i$-invariant chain)  is a sequence $\Ic = (I_n)_{n \in \N}$ of ideals $I _n\subset K[X_n]$ such that, as subsets of $K[X]$, one has
\[
\Inc^i_{m, n} (I_m) \subset I_n \text{ whenever } m \le n.
\]
(ii) An $\Inc^i$-invariant  filtration $\Ic = (I_n)_{n \in \N}$ \emph{stabilizes}  if there is an integer $r$ such that, as ideals of $K[X_n]$, one has
\[
\la \Inc^i_{r, n} (I_r)\ra_{K[X_n]} = I_n \text{ whenever } r  \le n.
\]
In this case, the least integer $r \ge 1$ with this property is said to be the \emph{$i$-stability index} $\ind^i (\Ic)$ of $\Ic$, that is,
\[
\ind^i (\Ic) = \inf \{ r \; | \; \la \Inc^i_{m, n} (I_m)\ra_{K[X_n]} = I_n \text{ whenever } r \le m \le n\}.
\]
 \end{defn}

 Since the identity is in $\Inc^i_{m, n}$, an $\Inc^i$-invariant  filtration satisfies, $I_m \subseteq I_n$ (as subsets of $K[X]$) if $m \le n$.
  Observe  that each $\Inc^i$-invariant filtration $\Ic$ is also $\Inc^{i+1}$-invariant. Moreover, $\ind^i (\Ic) \ge \ind^{i+1} (\Ic)$.    If $i = 0$ we speak of an $\Inc$-invariant filtration, set $\ind (\Ic) =  \ind^0 (\Ic)$, and  call this number  the \emph{stability index} of $\Ic$.

 To simplify notation we do not mention the extension ring explicitly if it is clear from context. For example, we often write $\la \Inc^i_{m, n} (I_m)\ra$ instead of $\la \Inc^i_{m, n} (I_m)\ra_{K[X_n]}$.

 We now provide several characterizations of stability.

 \begin{lem}
     \label{lem:stab}
 Let $\Ic = (I_n)_{n \in \N}$  be an  $\Inc^i$-invariant  filtration. For a positive integer $r$, the following conditions are equivalent:
 \begin{itemize}

 \item[(a)]  $\Ic$ stabilizes and its $i$-stability index is at most $r$;

 \item[(b)]  For each pair of  integers $n \ge m \ge r$, one has
 \[
 \la \Inc^i_{m, n} (I_m)\ra_{K[X_n]} = I_n;
 \]

 \item[(c)] For each integer $n \ge r$, one has, as ideals of $K[X_n]$,
 \[
\bigcup_{j \le r} \la \Inc^i_{j , n} (I_j) \ra =  I_n.
 \]

 \end{itemize}
 \end{lem}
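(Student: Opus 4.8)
The plan is to establish the cycle of implications $(a) \Rightarrow (b) \Rightarrow (c) \Rightarrow (a)$, with the first being essentially a matter of unwinding definitions and the last being the only place where new input is genuinely required. For $(a) \Rightarrow (b)$: if $\Ic$ stabilizes with $i$-stability index $s \le r$, then by definition $\la \Inc^i_{m,n}(I_m)\ra = I_n$ for all $n \ge m \ge s$; since $r \ge s$, the condition in particular holds for all $n \ge m \ge r$, which is exactly $(b)$.

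For $(b) \Rightarrow (c)$: fix $n \ge r$. One inclusion is automatic, since each $\la \Inc^i_{j,n}(I_j)\ra$ is contained in $I_n$ (this uses only that $\Ic$ is an $\Inc^i$-invariant filtration, so $\Inc^i_{j,n}(I_j) \subset I_n$). For the reverse inclusion, take $j = r$ in the union and invoke $(b)$ with $m = r$: $\la \Inc^i_{r,n}(I_r)\ra = I_n$. Hence the union over $j \le r$ already contains $I_n$ via its $j = r$ term (one should note here that if $r \le n$ fails to hold strictly, i.e. $n = r$, the identity map lies in $\Inc^i_{r,r}$ so $\la \Inc^i_{r,n}(I_r)\ra = I_r = I_n$, and the statement is trivial).

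For $(c) \Rightarrow (a)$: this is the step where I expect the real work, and it is where Proposition~\ref{prop:inc-decomp} enters. We must show that $(c)$ holding for every $n \ge r$ implies $\la \Inc^i_{m,n}(I_m)\ra = I_n$ for every pair $n \ge m \ge r$; this gives both that $\Ic$ stabilizes and that its $i$-stability index is at most $r$. The natural approach is induction on $n - m$. The base case $m = n$ is the identity-map observation. For the inductive step with $m < n$, I would use the decomposition $\Inc^i_{m,n} = \Inc^i_{m+1,n} \circ \Inc^i_{m,m+1}$ from Proposition~\ref{prop:inc-decomp} to write $\la \Inc^i_{m,n}(I_m)\ra \supseteq \la \Inc^i_{m+1,n}(\la \Inc^i_{m,m+1}(I_m)\ra)\ra$. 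The difficulty is controlling $\la \Inc^i_{m,m+1}(I_m)\ra$: a priori this is an ideal of $K[X_{m+1}]$ that sits between $I_m$ and $I_{m+1}$ but need not equal $I_{m+1}$ for a general filtration. However, condition $(c)$ applied at level $m+1$ (valid since $m+1 > r$, using $m \ge r$) tells us that $I_{m+1} = \bigcup_{j \le r} \la \Inc^i_{j,m+1}(I_j)\ra$, and since $m \ge r$ each term with $j \le r \le m$ is contained in $\la \Inc^i_{m,m+1}(I_m)\ra$ (compose an element of $\Inc^i_{j,m}$, which exists and maps $I_j$ into $I_m$, with one of $\Inc^i_{m,m+1}$ — again invoking the decomposition $\Inc^i_{j,m+1} = \Inc^i_{m,m+1}\circ \Inc^i_{j,m}$). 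Hence $\la \Inc^i_{m,m+1}(I_m)\ra = I_{m+1}$, and then $\la \Inc^i_{m,n}(I_m)\ra \supseteq \la \Inc^i_{m+1,n}(I_{m+1})\ra = I_n$ by the induction hypothesis, while the reverse inclusion is automatic. The main obstacle, then, is this bookkeeping with the composition of filtration indices — making sure the decomposition of Proposition~\ref{prop:inc-decomp} is applied at the right levels and that each $\Inc^i_{j,m}$ used is nonempty (which holds precisely because $j \le m$).

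One should double-check at the end that the argument yields the index bound cleanly: $(c)$ for all $n \ge r$ implies $(b)$ form of stability with index $\le r$, hence $\Ic$ stabilizes and $\ind^i(\Ic) \le r$, which is condition $(a)$. Since the three implications close the loop, all three conditions are equivalent.
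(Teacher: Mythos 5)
Your proof is correct and follows essentially the same route as the paper: the only real content in either argument is the nesting $\la \Inc^i_{j,n}(I_j)\ra \subseteq \la \Inc^i_{j+1,n}(I_{j+1})\ra$ obtained from Proposition \ref{prop:inc-decomp} together with the filtration property, which you deploy in (c) $\Rightarrow$ (a) just as the paper does (the paper collapses the union in (c) to its top term $\la\Inc^i_{r,n}(I_r)\ra$ rather than inducting on $n-m$, but this is cosmetic). One small caveat: your (a) $\Rightarrow$ (b) is "by definition" only under the displayed-formula reading of Definition \ref{def:filtration}(ii); under the literal reading, where the stability index is defined by the condition with $m$ equal to the index only, you still need the same nesting chain to pass from $\la \Inc^i_{r,n}(I_r)\ra = I_n$ to $\la\Inc^i_{m,n}(I_m)\ra = I_n$ for all $r \le m \le n$ — which is exactly the one-line argument the paper supplies for this implication.
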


The lemma above shows that Definition \ref{def:filtration}(ii)  is equivalent to \cite[Definition 2.17]{HS} in the case of $\Inc^i$, which corresponds to condition (c) of  Lemma \ref{lem:stab}.

 \begin{proof}[Proof of Lemma \ref{lem:stab}]
 The implications (b) $\Rightarrow$ (a) and (a) $\Rightarrow$ (c) are clear. Note that Proposition \ref{prop:inc-decomp} gives for any pair of positive integers $m < n$
 \[
 \la \Inc^i_{m , n} (I_m) \ra = \la \Inc^i_{m+1 , n}  \circ \Inc^i_{m , m+1}  (I_m) \ra \subseteq  \la \Inc^i_{m+1 , n} (I_{m+1}) \ra.
 \]
 It follows that $\bigcup_{j \le r} \la \Inc^i_{j , n} (I_j) \ra = \la \Inc^i_{r , n} (I_j) \ra$, and thus (c) implies (a). Similarly, assuming (a), one gets if $r \le m \le  n$,
 \[
 I_n =  \la \Inc^i_{r, n} (I_r)\ra  \subseteq  \la \Inc^i_{m , n} (I_{m}) \ra \subseteq I_n.
 \]
Now  (b) follows.
 \end{proof}

Note that the extension ideals  in $K[X]$ of the ideals in an $\Inc^i$-invariant filtration $\Ic = (I_n)_{n \in \N}$ form an ascending chain of ideals
\[
\la I_1\ra_ {K[X] } \subset \la I_2\ra_{K[X]} \subset \cdots .
\]
In general, these ideals are not $\Inc^i$-invariant. However, their union
\[
 \bigcup_{n \ge 1} \la I_n\ra_{K[X]} = \bigcup_{n \ge 1} I_n
\]
is an $\Inc^i$-invariant ideal. The equality is easily seen,  and the claimed  invariance follows from the following observation: If $f \in K[X_m]$ and $\pi \in \Inc^i$, then, setting $n = \pi (m)$, one has   $\pi (f) \in K[X_n]$  and $\pi \in \Inc^i_{m, n}$ because $\pi$ is an increasing function.

We are ready to present the announced application of Theorem \ref{thm:i-finiteness}, which extends \cite[Theorem 3.6]{HS}.

\begin{cor}
     \label{cor:filtration stable}
Each $\Inc^i$-ascending chain stabilizes.
\end{cor}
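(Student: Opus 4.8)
The plan is to deduce this from the fact that $K[X]$ is $\Inc^i$-noetherian (Corollary \ref{cor:Inc-noeth}) together with the characterization of stability in Lemma \ref{lem:stab}. Given an $\Inc^i$-invariant filtration $\Ic = (I_n)_{n \in \N}$, form the $\Inc^i$-invariant ideal $I = \bigcup_{n \ge 1} \la I_n\ra_{K[X]} = \bigcup_{n \ge 1} I_n$ of $K[X]$, which is $\Inc^i$-invariant by the observation recorded just before the statement. By Corollary \ref{cor:Inc-noeth}, $I$ is generated by finitely many $\Inc^i$-orbits, say of polynomials $f_1, \dots, f_s$. Since each $f_k$ lies in the union $\bigcup_n I_n$, there is some index $r_k$ with $f_k \in I_{r_k}$; set $r = \max\{r_1, \dots, r_s\}$. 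Because the filtration is ascending, i.e.\ $I_m \subseteq I_n$ as subsets of $K[X]$ for $m \le n$, we then have $f_1, \dots, f_s \in I_r$, and $I$ is generated as an $\Inc^i$-invariant ideal by the $\Inc^i$-orbits of elements of $I_r$.

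Next I would verify condition (c) of Lemma \ref{lem:stab} for this $r$ — equivalently, that $I_n = \la \Inc^i_{r, n}(I_r)\ra_{K[X_n]}$ for all $n \ge r$. The inclusion $\la \Inc^i_{r, n}(I_r)\ra_{K[X_n]} \subseteq I_n$ holds by the defining property of an $\Inc^i$-invariant filtration. For the reverse inclusion, take $g \in I_n$ with $n \ge r$. Then $g \in I$, so $g$ lies in the ideal of $K[X]$ generated by $\{\pi(f_k) : \pi \in \Inc^i,\ k \in [s]\}$; write $g = \sum_{\text{finite}} h_{k, \pi}\, \pi(f_k)$ with $h_{k, \pi} \in K[X]$. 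Now apply the retraction $\rho_n : K[X] \to K[X_n]$: since $g \in K[X_n]$ we get $g = \rho_n(g) = \sum \rho_n(h_{k, \pi})\, \rho_n(\pi(f_k))$. The point is that each term $\pi(f_k)$ that survives under $\rho_n$ — i.e.\ whose image is nonzero — must already involve only variables in $X_n$; writing $m_k$ for the largest column index appearing in $f_k$ (so $m_k \le r$), the monomials of $\pi(f_k)$ have column indices among $\pi([m_k])$, and these all lie in $[n]$ exactly when $\pi(m_k) \le n$, i.e.\ when $\pi \in \Inc^i_{m_k, n} \subseteq \Inc^i_{r, n}$ (using $m_k \le r \le n$ and that such $\pi$, being increasing, automatically satisfies $\pi(r) \le n$ once restricted appropriately — here one should be slightly careful and instead note $\pi(f_k) = \sigma(f_k')$ for a suitable truncation, or simply argue that $\rho_n(\pi(f_k))$ equals $\pi'(f_k)$ for some $\pi' \in \Inc^i_{r,n}$). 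Thus $\rho_n(g)$ is a $K[X_n]$-linear combination of elements $\pi'(f_k)$ with $\pi' \in \Inc^i_{r, n}$ and $f_k \in I_r$, giving $g \in \la \Inc^i_{r, n}(I_r)\ra_{K[X_n]}$.

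I expect the main subtlety to be precisely this last divisibility/column-index bookkeeping: one must check that when a term $\pi(f_k)$ with $\pi \in \Inc^i$ has its image under $\rho_n$ nonzero, that image is itself of the form $\pi'(f_k)$ for some $\pi' \in \Inc^i_{r, n}$, so that it genuinely belongs to the generating set $\Inc^i_{r,n}(I_r)$ and not merely to a larger orbit. This follows because if the largest column index $m_k$ of $f_k$ satisfies $\pi(m_k) \le n$, then $\pi$ restricted to $[m_k]$ extends to an element $\pi'$ of $\Inc^i$ with $\pi'(j) = \pi(j)$ for $j \le m_k$ and $\pi'(m_k) \le n$, i.e.\ $\pi' \in \Inc^i_{m_k, n}$, and $\pi(f_k) = \pi'(f_k)$ since $f_k \in K[X_{m_k}]$; conversely if $\pi(m_k) > n$ then $\pi(f_k)$ involves a variable outside $X_n$ and $\rho_n$ kills it (here using that $f_k$ is, without loss of generality, homogeneous of positive degree, or more carefully keeping track of constant terms). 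Having established condition (c), Lemma \ref{lem:stab} gives that $\Ic$ stabilizes with $\ind^i(\Ic) \le r$, which is exactly the assertion of Corollary \ref{cor:filtration stable}.
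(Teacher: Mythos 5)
Your overall strategy --- reduce to the $\Inc^i$-noetherianity of $K[X]$ (Corollary \ref{cor:Inc-noeth}) and then verify the stability condition of Lemma \ref{lem:stab} by projecting a representation $g=\sum h_{k,\pi}\,\pi(f_k)$ down to $K[X_n]$ via $\rho_n$ --- is genuinely different from the paper's, which runs the Gr\"obner-basis argument of Hillar--Sullivant's Theorem 3.6 using the well-partial-order $|_{\Inc^i}$ established in the proof of Theorem \ref{thm:i-finiteness}. The difference matters, because your projection step has a real gap exactly at the point you flag as ``the main subtlety.'' For a non-monomial generator $f_k$, the claim that $\rho_n(\pi(f_k))$ is either zero or of the form $\pi'(f_k)$ with $\pi'\in\Inc^i_{r,n}$ is false: $\rho_n$ kills only those monomials of $\pi(f_k)$ involving a variable of column index $>n$, not the whole polynomial. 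For instance, with $c=1$, $f=x_{1,1}+x_{1,2}$ and $\pi(1)=1$, $\pi(2)=n+1$, one gets $\rho_n(\pi(f))=x_{1,1}$, which is neither $0$ nor in any orbit of $f$ and need not lie in $\langle f\rangle$, let alone in $\la \Inc^i_{r,n}(I_r)\ra_{K[X_n]}$. So after applying $\rho_n$ you no longer have an expression of $g$ in terms of the required generators, and the inclusion $I_n\subseteq \la \Inc^i_{r,n}(I_r)\ra$ does not follow. (There is a second, more repairable slip: the inclusion $\Inc^i_{m_k,n}\subseteq\Inc^i_{r,n}$ is backwards --- for $m_k\le r$ one has $\Inc^i_{r,n}\subseteq\Inc^i_{m_k,n}$ --- and passing from $\pi\in\Inc^i_{m_k,n}$ to an element of $\Inc^i_{r,n}$ applied to an element of $I_r$ requires the decomposition of Proposition \ref{prop:inc-decomp} together with $f_k\in I_{m_k}\subseteq I_r$, not a mere extension of $\pi|_{[m_k]}$, which may be impossible when $\pi(m_k)$ is close to $n$.)

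This is precisely why both Hillar--Sullivant and the paper work with an equivariant Gr\"obner basis rather than an equivariant generating set: in the division algorithm one subtracts at each step a term $u\cdot\pi(g)$ with $\pi(g)$ already in $K[X_n]$, so one never has to project a representation from $K[X]$ back down to $K[X_n]$. Remark \ref{rem:index est} accordingly bounds the stability index by the least $r$ with a Gr\"obner basis in $K[X_r]$, not by the degree at which orbit generators appear --- a further indication that finite generation by orbits alone is not known to suffice for your step. To repair the argument you would need to take $f_1,\dots,f_s$ to be a finite $\Inc^i$-Gr\"obner basis from Theorem \ref{thm:i-finiteness} and replace the projection step by the division argument, which is essentially the paper's proof.
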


\begin{proof} By the proof of  Theorem \ref{thm:i-finiteness}, divisibility  $|_{\Inc^i}$ gives a well-partial-order. Moreover, as mentioned below Lemma \ref{lem:stab} the concepts of stability in \cite{HS} and Definition \ref{def:filtration} are equivalent.  Now one argues as
 in the proof of \cite[Theorem 3.6]{HS}, which shows the claim for $i = 0$.
\end{proof}

\begin{rem}
   \label{rem:index est}
For an  $\Inc^i$-invariant filtration $\Ic = (I_n)_{n \in \N}$, set $I = \bigcup_{n \in \N} I_n$. Let $B$ be a finite $\Inc^i$-Gr\"obner basis of $I$, and denote by $r$ the least integer $n$ such that $B \subset K[X_{n}]$. Then the proof of \cite[Theorem 3.6]{HS} (see in particular the use of \cite[Lemma 2.18]{HS}) shows that $\ind^i (\Ic) \le r$.
\end{rem}


We say that two $\Inc^i$-invariant filtrations  $(I_n)_{n \in \N}$ and $(J_n)_{n \in \N}$ are \emph{equivalent filtrations}  if $\bigcup_{n \ge 1} I_n = \bigcup_{n \ge 1} J_n$. Among equivalent filtrations there is a unique maximal filtration. Indeed, given any $\Inc^i$-invariant ideal $I$ of $K[X]$, define a sequence of ideals $(I \cap K[X_n])_{n  \in \N}$. It is an $\Inc^i$-invariant filtration, which we call the  \emph{saturated filtration of $I$}. If now $(I_n)_{n \in \N}$  is an arbitrary $\Inc^i$-invariant filtration, then it is a subfiltration of the saturated filtration of $I = \bigcup_{n \ge 1} I_n$.

Given an ideal $I_r \subset K[X_r]$,
\[
\bigcap_{I_r \subseteq J \subset K[X] \text{ $\Inc^i$-invariant ideal} } J  = \la \Inc^i (I_r)\ra_{K[X]}
\]
is the smallest $\Inc^i$-invariant ideal that contains $I_r$.

Thus, we call it the  \emph{$\Inc^i$-closure} of $I_r$.
There are several   $\Inc^i$-invariant filtrations that are smaller, but equivalent to the saturated filtration of the $\Inc^i$-closure. One of them is given below, with two a priori different descriptions.

\begin{lem}
    \label{lem:equivfilt}
Given an integer $i \ge 0$ and an ideal $0 \neq \tilde{I} \in K[X_r]$, define two sequences of ideals $\Ic = (I_n)_{n \in \N}$  and $\Jc  = (J_n)_{n \in \N}$   by
\[
I_n = \begin{cases}
\la 0 \ra & \text{if } 1 \le n < r,\\
\tilde{I} & \text{if } n = r, \\
\la \Inc^i_{n-1,n} (I_{n-1}) \ra & \text{if } n > r
\end{cases}
\]
and
\[
J_n = \begin{cases}
\la 0 \ra & \text{if } 1 \le n < r,\\
\la \Inc^i_{r,n} (\tilde{I})\ra & \text{if } n \ge r.
\end{cases}
\]
Then $\Ic$  and $\Jc$ are equal $\Inc^i$-invariant filtrations with stability index $r$. Moreover, $J = \bigcup_{n \in \N} J_n$ is the $\Inc^i$-closure of $\tilde{I}$.
\end{lem}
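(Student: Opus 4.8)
The plan is to prove the three assertions in order: first that $\Ic = \Jc$ as sequences of ideals, then that this common sequence is an $\Inc^i$-invariant filtration with stability index exactly $r$, and finally that its union is the $\Inc^i$-closure of $\tilde I$. The equality $\Ic = \Jc$ will be shown by induction on $n$: for $n < r$ both are the zero ideal and for $n = r$ both equal $\tilde I$, so suppose $n > r$ and $I_{n-1} = J_{n-1} = \la \Inc^i_{r, n-1}(\tilde I)\ra$. Then $I_n = \la \Inc^i_{n-1, n}(I_{n-1})\ra = \la \Inc^i_{n-1,n}\bigl(\la \Inc^i_{r,n-1}(\tilde I)\ra\bigr)\ra = \la \Inc^i_{n-1,n} \circ \Inc^i_{r, n-1}(\tilde I)\ra$, where in the last step one uses that applying a ring homomorphism (each $\pi \in \Inc^i_{n-1,n}$ acts as one) to an ideal generated by a set $G$ gives the ideal generated by the image of $G$. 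Now I apply Proposition \ref{prop:inc-decomp} repeatedly — more precisely its final assertion $\Inc^i_{m,n} = \Inc^i_{m+1,n} \circ \Inc^i_{m,m+1}$, iterated — to identify $\Inc^i_{n-1,n} \circ \Inc^i_{r,n-1}$ with $\Inc^i_{r,n}$ as subsets of $\Inc$; this gives $I_n = \la \Inc^i_{r,n}(\tilde I)\ra = J_n$, closing the induction.

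Next I verify that $\Jc$ is an $\Inc^i$-invariant filtration, i.e. $\Inc^i_{m,n}(J_m) \subseteq J_n$ for $m \le n$. For $m < r$ this is trivial since $J_m = 0$. For $r \le m \le n$, note $\Inc^i_{m,n}(J_m) = \Inc^i_{m,n}\bigl(\la \Inc^i_{r,m}(\tilde I)\ra\bigr) \subseteq \la \Inc^i_{m,n} \circ \Inc^i_{r,m}(\tilde I)\ra$, and again by the iterated Proposition \ref{prop:inc-decomp} one has $\Inc^i_{m,n} \circ \Inc^i_{r,m} \subseteq \Inc^i_{r,n}$ (the decomposition is an equality of sets, so in particular this containment holds), whence the right side is $\la \Inc^i_{r,n}(\tilde I)\ra = J_n$. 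That the stability index is at most $r$ is immediate from condition (b) of Lemma \ref{lem:stab}: for $n \ge m \ge r$, $\la \Inc^i_{m,n}(J_m)\ra = \la \Inc^i_{m,n}\circ \Inc^i_{r,m}(\tilde I)\ra = \la \Inc^i_{r,n}(\tilde I)\ra = J_n$, using the set decomposition once more. That the index is not smaller follows because $J_n = 0$ for $n < r$ while $J_r = \tilde I \ne 0$: if the index were some $r' < r$ then $J_{r'} = 0$ would force $J_r = \la \Inc^i_{r', r}(J_{r'})\ra = 0$, a contradiction. Hence $\ind^i(\Jc) = r$.

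Finally, for the $\Inc^i$-closure statement, set $J = \bigcup_{n} J_n$. On one hand every generator of every $J_n$ lies in $\Inc^i(\tilde I)$, so $J \subseteq \la \Inc^i(\tilde I)\ra_{K[X]}$. For the reverse inclusion I must show $\la \Inc^i(\tilde I)\ra_{K[X]} \subseteq J$; since $J$ is $\Inc^i$-invariant (it is the union of an $\Inc^i$-invariant filtration, which is $\Inc^i$-invariant by the observation following Corollary \ref{cor:filtration stable}) and contains $\tilde I = J_r$, minimality of the $\Inc^i$-closure gives the containment. Combining, $J = \la \Inc^i(\tilde I)\ra_{K[X]}$, the $\Inc^i$-closure of $\tilde I$.

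The only point requiring genuine care — and the step I expect to be the main obstacle — is the bookkeeping in the iterated application of Proposition \ref{prop:inc-decomp}: one needs that the $k$-fold composite $\Inc^i_{n-1,n}\circ \Inc^i_{n-2,n-1}\circ\cdots\circ\Inc^i_{r,r+1}$ equals $\Inc^i_{r,n}$ as a subset of $\Inc$, and more generally that $\Inc^i_{m,n}\circ \Inc^i_{k,m} = \Inc^i_{k,n}$ for $k \le m \le n$. The inclusion $\subseteq$ is clear from composing the constraints $\pi(k) \le m$ and $\tau(m) \le n$ (an increasing $\tau$ with $\tau(m) \le n$ satisfies $\tau(\pi(k)) \le \tau(m) \le n$), while $\supseteq$ is obtained by telescoping the final assertion of Proposition \ref{prop:inc-decomp} down the interval $[k, m]$. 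Once this set-level identity is in hand, every assertion of the lemma reduces to the formal manipulation "$\pi$ acts as a ring homomorphism, so $\pi(\la G\ra) $ generates $\la \pi(G)\ra$" together with taking unions of generating sets, all of which is routine.
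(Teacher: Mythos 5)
Your proof is correct and follows essentially the same route as the paper: both arguments rest on iterating the final assertion of Proposition \ref{prop:inc-decomp} to telescope $\Inc^i_{r,n}$ into a composite of one-step sets, and on the routine fact that the monoid elements act as ring homomorphisms. The only organizational difference is that you package the iteration as the set identity $\Inc^i_{m,n}\circ\Inc^i_{k,m}=\Inc^i_{k,n}$ and use it for both inclusions at once (plus an explicit verification that the stability index is not smaller than $r$), whereas the paper proves $J_n\subseteq I_n$ from the decomposition and $I_n\subseteq J_n$ by a separate induction.
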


\begin{proof}
Since
 $\Inc^i_{m,n} (J_m) \subset J_n$ whenever $n \ge  m \ge r$ the sequence $(J_n)_n$ is an $\Inc^i$-invariant filtration.
Applying Proposition \ref{prop:inc-decomp} repeatedly, we get, for $n > r$,
\[
J_n = \la \Inc^i_{r,n} (\tilde{I})\ra = \la \Inc^i_{n-1,n} \circ \Inc^i_{n-2,n-1} \circ \cdots \circ \Inc^i_{r+1,r} (\tilde{I})\ra \subseteq I_n.
\]
For the reverse inclusion we use induction on $n \ge r$. This is clear if $n = r$.  If $n > r$ we get
\[
I_n = \la \Inc^i_{n-1,n} (I_{n-1}) \ra = \la \Inc^i_{n-1,n} (J_{n-1}) \ra \subseteq J_n.
\]
We conclude that $J_n = I_n$ if $n \ge r$.
Finally, using that each $\pi \in \Inc^i$ is in $\Inc^i_{r, \pi(r)}$, it is easy to see that $J = \bigcup_{n \in \N} J_n$ is the $\Inc^i$-closure of $\tilde{I}$.
\end{proof}

\begin{rem}
Note that, with the notation of Lemma \ref{lem:equivfilt}, one has
\[
J \cap K[X_n] = I_n = J_n \quad \text{ for all } n \ge r.
\]
Indeed, this can be shown similarly as \cite[Proposition 2.10]{HS} using \cite[Lemma 2.18]{HS}.

\end{rem}


\section{Hilbert series up to symmetry}

 Recall that the equivariant or bigraded Hilbert series of a filtration of ideals $\Ic = (I_n)_{n \in \N}$ is defined as
\[
H_{\Ic} (s, t) = \sum_{n \ge 0,\, j \ge 0} \dim_K [K[X_n]/I_n]_j \cdot s^n t^j,
\]
where $I_0 = 0$, and thus $K[X_0]/I_0 \cong K$. For the remainder of this section we restrict ourselves to considering  filtrations of monomial ideals.

\begin{ex}
   \label{exa:trivial filtration}
If a filtration $\Ic = (I_n)_{n \in \N}$ is zero, that is, $I_n = 0$ for all $n$, then its bigraded Hilbert series
\[
H_{\Ic} (s, t) = \sum_{n \ge 0} \frac{1}{(1-t)^{cn}} s^n = \sum_{n \ge 0} \left(\frac{s}{(1-t)^{c}} \right)^n= \frac{(1-t)^c}{(1-t)^c - s}
\]
is rational. In the other trivial case where $I_n = K[X_n]$ for all $n \in \N$, one has $H_{\Ic} (s, t) = 1$.
\end{ex}

 The goal of this section is to show that  the situation in the example is typical. We denote by $G(J)$ the minimal system of monomial generators of a monomial ideal $J$ which is uniquely determined. Furthermore, we write  $e^+ (J)$  for  the maximum degree of a minimal generator of a graded ideal $J$.

\begin{thm}
    \label{thm:rat Hilb series, rev}
Let $ i \ge 0$ and consider any  $\Inc^i$-invariant  filtration of monomial ideals  $\Ic = (I_n)_{n \in \N}$.  Set $r  = ind^i (\Ic)$ and $q = \sum_{j = 0}^{e^+ (I_r)}  \dim_K [K[X_r]/I_r]_j$.
Then the bigraded Hilbert series $H_{\Ic} (s, t)$  of $\Ic$ is a  rational function in $s$ and $t$  of the form
\[
H_{\Ic} (s, t) = \frac{g(s, t)}{(1-t)^a \cdot \prod_{j =1}^b [(1-t)^{c_j} - s \cdot f_j (t)]},
\]
where $a, b, c_j \in \N_0$ with $c_j \le c$, \ $g (s, t) \in \Z[s, t]$, each $f_j (t) \in \Z[t]$, and $f_j (1) >  0$.

Furthermore,
$a \le (r-1+ 2q) c$,   the degree of   $g(s,1)$  is at most  $r+q$, and \ $b \le  \frac{(d+1)^{q c} - 1}{(d+1)^c - 1}$,
where
\[
d =  \max\{ e \in \N_0 \; | \; x_{k, i+1}^e \text{ divides some monomial in } G(I_r) \text{ for some } k \in [c]\}.
\]
\end{thm}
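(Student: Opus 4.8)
The plan is to reduce, via the stabilization result (Lemma \ref{lem:stab} and Corollary \ref{cor:filtration stable}), to an explicit recursive description of the ideals $I_n$ for $n \ge r$ and then to induct on the number of variables that are "new" relative to the stable generating set. First I would note that for $n \ge r = \ind^i(\Ic)$ one has $I_n = \langle \Inc^i_{r,n}(I_r)\rangle_{K[X_n]}$, so the tail of the filtration is completely determined by the single ideal $I_r \subset K[X_r]$. The key structural observation is that, since every $\pi \in \Inc^i$ fixes $1,\dots,i$, the variables $x_{k,1},\dots,x_{k,i}$ (for $k \in [c]$) play a passive role, while the "active" column that matters for the recursion is column $i+1$: an element $\sigma_i \in \Inc^i$ shifts columns $i+1, i+2, \dots$ up by one. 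Using Proposition \ref{prop:inc-decomp} in the form $\Inc^i_{r,n} = \Inc^i_{r+1,n}\circ \Inc^i_{r,r+1}$ repeatedly, I would set up a recursion expressing $I_{n+1}$ in terms of $\langle \Inc^i_{r,r+1}(\cdot)\rangle$ applied to $I_n$, which adds exactly the $c$ new variables $x_{1,n+1},\dots,x_{c,n+1}$.

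The heart of the argument is then a decomposition of each $K[X_n]/I_n$ into finitely many pieces whose Hilbert series can be tracked simultaneously. I would associate to $I_r$ the finite set of monomials $\mathcal{M}$ of $K[X_r]/I_r$ in degrees up to $e^+(I_r)$ — there are $q = \sum_{j=0}^{e^+(I_r)} \dim_K[K[X_r]/I_r]_j$ of them, which explains where $q$ enters. Because any monomial in $K[X_n]/I_n$ factors as a monomial supported on the "old" variables times a monomial supported on the "new" columns, and because divisibility by the shifted generators of $I_r$ is controlled by the exponent of $x_{k,i+1}$ (bounded by $d$), one gets a finite "transfer-matrix"-type bookkeeping: the state is recorded by which of the $\le (d+1)^{qc}/((d+1)^c-1)$ relevant residue patterns in the active column occurs, and passing from column $n$ to column $n+1$ multiplies the generating function by a fixed rational matrix in $t$. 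Summing the resulting geometric-type series in $s$ produces a denominator that is a product of factors $(1-t)^{c_j} - s\,f_j(t)$ — one such factor for each state, with $c_j \le c$ counting how many of the $c$ new variables in that state are still free — together with a factor $(1-t)^a$ absorbing the fixed variables $x_{k,1},\dots,x_{k,i}$ and the finitely many "transient" terms $n < r$ plus the correction terms; the bound $a \le (r-1+2q)c$ comes from: $(r-1)c$ variables in columns $1,\dots,r-1$ for the transient part, and $2qc$ from clearing denominators in the $q$ states each contributing up to $2c$ to the pole at $t=1$ (one $c$ for the pure $(1-t)^c$ factor and one $c$ from $f_j(1)>0$ after combining). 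The bound $\deg g(s,1) \le r+q$ follows by degree-counting in the numerator after the geometric summation: the transient part contributes $s$-degree up to $r-1$, and each of the $q$ states contributes at most one more factor of $s$ in the numerator of the combined fraction.

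For the $b$ bound I would argue that the relevant states are indexed by tuples $(e_1,\dots,e_c) \in \{0,1,\dots,d\}^c$ recording, for each row $k$, the exponent of $x_{k,i+1}$ modulo the relevant threshold, but only those tuples that can actually arise from iterating the shift on the $q$ monomials of $\mathcal{M}$ — a counting argument bounds the number of reachable nonempty states by $\sum_{\ell=1}^{q}((d+1)^c)^{\ell-1} = \frac{(d+1)^{qc}-1}{(d+1)^c-1}$, since after $\ell$ shifts at most $\ell$ of the original monomials remain "alive" and each contributes a $(d+1)^c$-fold branching. The positivity $f_j(1)>0$ holds because $f_j(t)$ is, up to sign, the Hilbert-series numerator of a nonzero quotient of a polynomial ring in at most $c_j$ variables, hence has strictly positive value at $t=1$ (this is the multiplicity, which is positive for a nonzero module).

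The main obstacle I anticipate is making the "transfer-matrix" formalism precise enough to simultaneously (i) guarantee the denominator genuinely factors into the claimed shape rather than into an irreducible higher-degree polynomial in $s$, and (ii) pin down the stated numerical bounds rather than merely asymptotic ones; concretely, one must show the state space is closed under the column-shift operation and that the operation acts triangularly (or at least by a matrix whose characteristic behavior yields the product form), which requires a careful combinatorial analysis of how $\Inc^i_{r,r+1}$ interacts with the Stanley-type decomposition of $K[X_n]/I_n$ along the active column $i+1$. I would isolate this as a separate lemma describing $K[X_{n+1}]/I_{n+1}$ as a direct sum, over the finitely many states, of shifted copies of polynomial modules, and then the rationality and the bounds fall out by a direct, if bookkeeping-heavy, computation.
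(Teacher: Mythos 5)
Your proposal assembles several of the right ingredients --- stabilization at index $r$, the special role of column $i+1$ and the shift $\sigma_i$, a decomposition governed by the exponents $e_k\le d$ of $x_{k,i+1}$, and the counting that produces $q$ and the bound on $b$ --- but the central step is exactly the one you flag as "the main obstacle," and it is a genuine gap rather than a bookkeeping issue. A transfer-matrix argument with a state space of size $b$ generically yields a denominator $\det(I - s\,T(t))$, an essentially irreducible polynomial of degree $b$ in $s$; nothing in your setup forces this to split into the linear-in-$s$ factors $(1-t)^{c_j}-s f_j(t)$ with $f_j\in\Z[t]$ and $f_j(1)>0$. You would need the matrix to act triangularly on the states, and you give no mechanism for that. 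The paper avoids the matrix entirely by a double induction on $p=\ind^i(\Ic)-i$ and on the $q$-invariant: after colonning out $x_{1,i+1}^{e_1}\cdots x_{c,i+1}^{e_c}$ and adding $x_{1,i+1},\dots,x_{c,i+1}$ (Lemmas \ref{lem:one table} and \ref{lem:multiple tables}), each resulting $\Inc^{i+1}$-invariant filtration either has strictly smaller $q$-invariant (so induction applies) or is forced to equal the \emph{shifted copy} $\la\sigma_i(I_r),x_{1,i+1},\dots,x_{c,i+1}\ra$ of the original, whose Hilbert series is $s\cdot H_{\Ic}(s,t)$ up to a controlled correction (Lemma \ref{lem:shifted filtrations}). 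Collecting the self-referential terms and solving the resulting linear equation for $H_{\Ic}(s,t)$ is what produces exactly one new factor $(1-t)^{c_j}-s f_j(t)$ per induction step; the positivity $f_j(1)>0$ then falls out because $f_j(1)$ counts the multi-indices $\be$ with $\delta(\be)=0$ in the self-referential sum, not because $f_j$ is a Hilbert numerator of some quotient (your stated reason is unsubstantiated and is not what happens in the recursion). This dichotomy "smaller invariant versus shifted copy of yourself" is the missing idea.

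A secondary but real issue is that your justifications for the numerical bounds do not survive scrutiny as stated. The bound $a\le(r-1+2q)c$ arises in the paper because each induction step on $q$ contributes at most $c$ from the prefactor $(1-t)^{-c}$ in Lemma \ref{lem:multiple tables} and at most $c$ more from clearing the factor $(1-t)^{\tilde c}$ out of $\tilde f(t)$ when solving for $H_{\Ic}$; your "one $c$ for the pure $(1-t)^c$ factor and one $c$ from $f_j(1)>0$ after combining" does not describe a step of any argument. Likewise the bound $b\le\frac{(d+1)^{qc}-1}{(d+1)^c-1}$ comes from the recursion $b_q\le(d+1)^c\,b_{q-1}+1$ in the induction on $q$ (at most $(d+1)^c$ sub-filtrations, each contributing $b_{q-1}$ factors, plus the one new factor from solving the equation); your "reachable states with $(d+1)^c$-fold branching, at most $\ell$ monomials alive after $\ell$ shifts" is the right number but is asserted rather than derived. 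To repair the proposal you would need to replace the transfer matrix by the induction on the $q$-invariant and prove the dichotomy above, at which point you would essentially be reconstructing the paper's proof.
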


For the proof, we begin with a special case.

\begin{lem}
    \label{lem: p is zero}
Consider any integer $i \ge 0$ and any non-trivial $\Inc^i$-invariant filtration $\Ic = (I_n)_{n \in \N}$ such that $r = \ind^i (\Ic) \le i$. Then  its bigraded Hilbert series $H_{\Ic} (s, t)$  is
\[
H_{\Ic} (s, t) = \frac{g (s, t)}{(1-t)^a \cdot [(1-t)^c - s]},
\]
where   $a =  \max\{\dim K[X_n]/I_n \; |  \; 1 \le n < r\}$,  $g(s,t)  \in \Z[s,t]$, and  $g(s,1) \neq 0$ is a polynomial in $s$  whose coefficients are all non-positive and whose degree is at most $r$.

Moreover, 
if $I_r = K[X_r]$, then $g(s, t)$ is divisible by $(1-t)^c - s$.
\end{lem}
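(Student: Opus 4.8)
The plan is to prove Lemma~\ref{lem: p is zero} by a direct computation of $H_{\Ic}(s,t)$ that exploits the hypothesis $r = \ind^i(\Ic) \le i$. The point of this hypothesis is that once $n \ge r$, the stabilization identity $I_n = \la \Inc^i_{r,n}(I_r)\ra_{K[X_n]}$ holds, and moreover, since $r \le i$, every $\pi \in \Inc^i_{r,n}$ fixes the initial segment $[i] \supseteq [r]$; in particular $\pi$ acts trivially on the variables indexed by columns $1,\dots,r$, so it only permutes (increasingly) the columns $r+1,\dots,n$ into $r+1,\dots,n$. First I would make this precise: for $n \ge r$, decompose the generators of $I_n$ and show that $K[X_n]/I_n \cong (K[X_r]/I_r) \otimes_{K[X_r']} (\text{something built from fresh variable blocks})$, or more directly, that passing from $K[X_n]/I_n$ to $K[X_{n+1}]/I_{n+1}$ amounts to adjoining one new block $\{x_{1,n+1},\dots,x_{c,n+1}\}$ of $c$ variables subject only to the relations coming from $\Inc^i_{r,n+1}(I_r)$ that involve column $n+1$.

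**The recursion.** The key step is to establish a linear recursion for the coefficient series. Set $h_n(t) = H_{K[X_n]/I_n}(t)$. For $1 \le n < r$ these are just whatever they are (finitely many of them). The claim I would aim for is: for $n \ge r$, one has $h_{n+1}(t) = \frac{1}{(1-t)^c}\, h_n(t) + (\text{correction})$, but in fact the cleanest route is to show that the ``tail'' of the filtration is governed entirely by the single ideal $I_r$ together with the shift operators $\sigma_r, \sigma_{r+1}, \dots$ (all of which lie in $\Inc^i$ since $r \le i$). Using Lemma~\ref{lem:equivfilt} with $\tilde I = I_r$ — so that $I_n = \la \Inc^i_{r,n}(I_r)\ra$ for $n \ge r$ — and the decomposition $\Inc^i_{r,n} = \Inc^i_{n-1,n}\circ \cdots$ from Proposition~\ref{prop:inc-decomp}, I would argue that $K[X_n]/I_n$ for $n \ge r$ stabilizes to a product structure over the ``stable'' part, so that the generating function $\sum_{n \ge r} h_n(t) s^n$ satisfies a first-order linear recursion with characteristic polynomial $(1-t)^c - s$. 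Concretely: a monomial $x$ survives in $K[X_n]/I_n$ iff its restriction to columns $\le r$ survives in $K[X_r]/I_r$ and certain explicit conditions on the higher columns hold; counting these by degree and summing over $n$ produces a geometric-type series whose denominator is exactly $(1-t)^a\cdot[(1-t)^c - s]$, with $a = \max\{\dim K[X_n]/I_n : 1 \le n < r\}$ absorbing the finitely many ``small'' terms and the worst-case dimension. The numerator $g(s,t)$ collects the finite discrepancy between the honest values $h_1,\dots,h_{r-1}$ and what the stable formula would predict, hence $g(s,t)\in\Z[s,t]$ with $\deg_s g \le r$; specializing $t \to 1$, each $h_n(1)$ for $n < r$ contributes a finite multiplicity while the stable tail contributes the pole, and a sign analysis of the telescoping shows the coefficients of $g(s,1)$ are non-positive and $g(s,1)\neq 0$ (nontriviality of $\Ic$ guarantees some $I_n \neq 0$, forcing a genuine defect).

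**The last claim.** For the moreover statement, suppose $I_r = K[X_r]$. Then for every $n \ge r$ we have $I_n = \la \Inc^i_{r,n}(I_r)\ra = \la \Inc^i_{r,n}(1)\ra = K[X_n]$, hence $h_n(t) = 0$ for all $n \ge r$, so $H_{\Ic}(s,t) = \sum_{n=0}^{r-1} h_n(t)s^n$ is a \emph{polynomial}. Therefore $g(s,t) = H_{\Ic}(s,t)\cdot (1-t)^a\cdot[(1-t)^c - s]$ is literally the product of a polynomial with $(1-t)^c - s$, so $(1-t)^c - s \mid g(s,t)$, as required. This part is essentially immediate once the shape of $H_{\Ic}$ is in hand.

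**Main obstacle.** The technical heart — and the step I expect to be the real work — is establishing the product/stabilization structure of $K[X_n]/I_n$ for $n \ge r$ precisely enough to extract a \emph{first-order} linear recursion with the single characteristic factor $(1-t)^c - s$, rather than a higher-order one. This hinges on the hypothesis $r \le i$: because $\Inc^i$ fixes columns $1,\dots,i \supseteq 1,\dots,r$, adding column $n+1$ cannot interact with columns $\le r$ except through $I_r$ itself, so the ``transfer matrix'' from step $n$ to step $n+1$ is genuinely the single operation ``tensor with $K[x_{1,n+1},\dots,x_{c,n+1}]$ modulo the column-$(n+1)$ relations,'' which is independent of $n$ for $n$ large. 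Making the bookkeeping of which monomials of $G(I_r)$ propagate to which columns fully rigorous — in particular verifying the degree bound $\deg_s g(s,1)\le r$ and the non-positivity of its coefficients via an inclusion–exclusion over the finitely many exceptional levels $n < r$ — is where care is needed; everything after that is formal manipulation of rational functions.
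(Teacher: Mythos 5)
Your overall route is the same as the paper's: show that the tail $n\ge r$ of the filtration is governed by $I_r$ alone, sum a geometric series with ratio $s/(1-t)^c$, and package the finitely many exceptional levels $n<r$ into the numerator. The ``moreover'' clause is handled exactly as in the paper and is fine. However, there are two concrete gaps.

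First, you label as the ``technical heart'' and leave unresolved precisely the point that the hypothesis $r\le i$ renders immediate. Since every $\pi\in\Inc^i$ satisfies $\pi(j)=j$ for $j\le i$ and $r\le i$, every $\pi\in\Inc^i_{r,n}$ restricts to the identity on $K[X_r]$; hence $\Inc^i_{r,n}(I_r)=I_r$ and $I_n=\la I_r\ra_{K[X_n]}$ for all $n\ge r$, so that $K[X_n]/I_n\cong (K[X_r]/I_r)[X_n\setminus X_r]$ is a \emph{free} polynomial extension in $c(n-r)$ variables and $H_{K[X_n]/I_n}(t)=H_{K[X_r]/I_r}(t)\cdot(1-t)^{-c(n-r)}$ exactly. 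There are no ``column-$(n+1)$ relations'' to mod out by and no transfer-matrix analysis is needed. As written, your sketch leaves open the possibility of genuine relations on the new columns, in which case the recursion would not be first order with the single factor $(1-t)^c-s$; so the central claim of your argument is not actually established.

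Second, the non-positivity of the coefficients of $g(s,1)$ is asserted (``a sign analysis of the telescoping'') but never proved, and it is not a formal consequence of telescoping. Writing $H_{K[X_n]/I_n}(t)=g_n(t)/(1-t)^{d_n}$ with $d_n=\dim K[X_n]/I_n$, the coefficient of $s^j$ in the numerator, for $1\le j\le r$, is $(1-t)^{a-d_{j-1}}\bigl[(1-t)^{c+d_{j-1}-d_j}g_j(t)-g_{j-1}(t)\bigr]$; at $t=1$ this vanishes unless $d_{j-1}=a$, and then it equals $-\deg I_{j-1}\le 0$ if $d_j<d_{j-1}+c$, and $\deg I_j-\deg I_{j-1}$ if $d_j=d_{j-1}+c$. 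The last case requires the inequality $\deg I_j\le \deg I_{j-1}$, which follows from the inclusion $I_{j-1}K[X_j]\subseteq I_j$ together with the equality of dimensions, by comparing leading coefficients of Hilbert polynomials. This substantive step is missing from your proposal, as is any argument that $g(s,1)\neq 0$.
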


\begin{proof}
Since $r \le i$ by assumption, the action of $\Inc^i$ leaves each polynomial in $K[X_r]$ invariant. It follows that, for each $n \ge r$, one has $K[X_n]/I_n \cong (K[X_r]/I_r) [X_n \setminus X_r]$. Using that $|X_n \setminus X_r| = c (n-r)$, this  implies
\[
H_{K[X_n]/I_n} (t) = H_{K[X_r]/I_r}(t)  \cdot \frac{1}{(1-t)^{c (n-r)}}.
\]
Hence, we obtain
\begin{align*}
H_{\Ic} (s,t) & = \sum_{n=0}^{r-1} H_{K[X_n]/I_n} (t) \cdot s^n + \sum_{n\ge r}  H_{K[X_n]/I_n} (t) \cdot s^n \\
& = \sum_{n=0}^{r-1} H_{K[X_n]/I_n} (t) \cdot s^n + \sum_{n\ge r}  H_{K[X_r]/I_r} (t) \cdot \frac{s^n}{(1-t)^{c (n-r)}} \\
& = \sum_{n=0}^{r-1} H_{K[X_n]/I_n} (t) \cdot s^n + (1-t)^{c r} \cdot  H_{K[X_r]/I_r} (t) \cdot \sum_{n\ge r}  \left (\frac{s}{(1-t)^c} \right )^n\\
& = \sum_{n=0}^{r-1} H_{K[X_n]/I_n} (t) \cdot s^n + (1-t)^{c r} \cdot  H_{K[X_r]/I_r} (t) \cdot \left (\frac{s}{(1-t)^c} \right )^r \cdot \frac{(1-t)^c}{(1-t)^c - s} \\
& = \sum_{n=0}^{r-1} \frac{g_n (t)}{(1-t)^{d_n}} \cdot s^n +    \frac{ g_r (t) s^r}{(1-t)^{d_r - c} [(1-t)^c - s]},
\end{align*}
where $H_{K[X_n]/I_n} (t) = {g_n (t)}/ {(1-t)^{d_n}}$ with $d_n = \dim K[X_n]/I_n$. This is also true if $g_r (t) = 0$. It follows that
\[
H_{\Ic} (s,t)  = \frac{1}{(1-t)^{a} \cdot  [(1-t)^c - s]} \sum_{j = 0}^r p_j (t) s^j
\]
with polynomials $p_0 (t) = (1-t)^{a+c - d_0} g_0 (t)$ and, for $j \in [r]$,
\[
p_j (t) = (1-t)^{a - d_{j-1}} \left [(1-t)^{c+d_{j-1} - d_j} g_j (t) - g_{j-1} (t)\right  ].
\]
Note that $d_j \le d_{j-1} + c$. Hence, we get $p_j (1) = 0$ if $j = 0$ or $d_j < a$ and otherwise
\[
p_j (1) = \begin{cases}
- g_{j-1} (1) = - \deg I_{j-1} \le  0 & \text{if } d_j < d_{j-1} + c, \\
\deg I_j - \deg I_{j-1} \le  0 & \text{if } d_j = d_{j-1} + c.
\end{cases}
\]
Here the last estimate is true because   $d_j = d_{j-1} + c$ gives $\dim K[X_j]/I_j = \dim K[X_j]/I_{j-1} K[X_j]$, and thus we conclude by comparing the leading coefficients of Hilbert polynomials using the fact $I_{j-1} K[X_j] \subset I_j$. It follows that  $H_{\Ic} (s, t)$ has  the desired form. Furthermore,  if $I_r = K[X_r]$ then $g_r (t) = 0$, which implies the statement in the special case.
\end{proof}

In order to establish Theorem \ref{thm:rat Hilb series, rev} we need several further preliminary results. The first observation
describes a certain invariance when manipulating a filtration.

\begin{lem}
    \label{lem:invariance under fixed i}
Let $\Ic = (I_n)_{n \in \N}$ be an $\Inc^i$-invariant filtration of monomial ideals with $i < \ind^i (\Ic)$. For any variable $x_{k, i} \in X_i \setminus X_{i-1}$ and any integer $e > 0$, consider filtrations $(\Ic, x_{k,i})$ and $(\Ic : x_{k,i}^e)$ whose $n$-th ideals are $\la I_n, x_{k,i} \ra$ and $I_n : x_{k,i}^e$, respectively, if $n \ge i$ and zero if $n < i$. Then both filtrations are $\Inc^i$-invariant and
\[
\ind^i (\Ic, x_{k, i}) \le \ind^i (\Ic) \quad \text{ and } \quad \ind^i (\Ic : x_{k, i}^e) \le \ind^i (\Ic).
\]
\end{lem}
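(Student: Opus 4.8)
The plan is to verify the two claims — $\Inc^i$-invariance of the modified filtrations and the bound on their stability indices — more or less in parallel, since invariance is the easy half and the index bound builds on it. First I would check that $(\Ic, x_{k,i})$ is an $\Inc^i$-invariant filtration. For $m \le n$ with $m \ge i$ one needs $\Inc^i_{m,n}(\la I_m, x_{k,i}\ra) \subseteq \la I_n, x_{k,i}\ra$. Any $\pi \in \Inc^i$ fixes the index $i$, hence $\pi(x_{k,i}) = x_{k,\pi(i)} = x_{k,i}$, so $\pi$ sends a monomial multiple $u \cdot x_{k,i}$ of the new generator to $\pi(u)\cdot x_{k,i}$, which lies in $\la I_n, x_{k,i}\ra$; combined with $\Inc^i$-invariance of $\Ic$ itself this gives the inclusion. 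The same observation — $\pi$ fixes $x_{k,i}$ — handles $(\Ic : x_{k,i}^e)$: if $f \in I_m : x_{k,i}^e$, then $f x_{k,i}^e \in I_m$, so $\pi(f) x_{k,i}^e = \pi(f) \pi(x_{k,i}^e) = \pi(f x_{k,i}^e) \in I_n$, i.e. $\pi(f) \in I_n : x_{k,i}^e$. (One should note these filtrations are well-defined as filtrations, i.e. the $n$-th ideal lives in $K[X_n]$ and the containments for $n < i$ are trivial.)

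For the stability indices, set $r = \ind^i(\Ic)$ and let $i < r$. I would argue that whenever $r \le m \le n$ and $\la \Inc^i_{m,n}(I_m)\ra_{K[X_n]} = I_n$, the analogous equalities hold after adjoining $x_{k,i}$ or passing to the colon. For $(\Ic, x_{k,i})$: the inclusion $\la \Inc^i_{m,n}(\la I_m, x_{k,i}\ra)\ra \subseteq \la I_n, x_{k,i}\ra$ is invariance; for the reverse, $I_n = \la \Inc^i_{m,n}(I_m)\ra \subseteq \la \Inc^i_{m,n}(\la I_m, x_{k,i}\ra)\ra$ and $x_{k,i} = \pi(x_{k,i}) \in \Inc^i_{m,n}(\la I_m, x_{k,i}\ra)$ for the identity $\pi$ (which lies in $\Inc^i_{m,n}$ since $m \le n$), so $\la I_n, x_{k,i}\ra \subseteq \la \Inc^i_{m,n}(\la I_m, x_{k,i}\ra)\ra$. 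Hence $\ind^i(\Ic, x_{k,i}) \le r$. For the colon, the containment $\la \Inc^i_{m,n}(I_m : x_{k,i}^e)\ra \subseteq I_n : x_{k,i}^e$ is again invariance; the point is the reverse inclusion, and here I would use that we are dealing with monomial ideals. Writing $J = \Inc^i_{m,n}(I_m)$, one has $I_n = \la J\ra_{K[X_n]}$, and for monomial ideals the colon by a monomial commutes with taking generating sets in the expected way: a monomial $w$ lies in $I_n : x_{k,i}^e$ iff $w x_{k,i}^e \in \la J\ra$ iff $w x_{k,i}^e$ is divisible by some $\pi(u)$ with $u \in G(I_m)$, $\pi \in \Inc^i_{m,n}$, and since $\pi(u) \in K[X_n]$ and $x_{k,i}$ is one fixed variable, this forces $w$ to be divisible by $\pi(u)/\gcd(\pi(u), x_{k,i}^e) = \pi(u : x_{k,i}^e) = \pi(u)/x_{k,i}^{\min(e, \deg_{x_{k,i}} u)}$ — using once more $\pi(x_{k,i}) = x_{k,i}$. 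Thus $w \in \la \Inc^i_{m,n}(I_m : x_{k,i}^e)\ra$, giving $I_n : x_{k,i}^e \subseteq \la \Inc^i_{m,n}(I_m : x_{k,i}^e)\ra$ and hence $\ind^i(\Ic : x_{k,i}^e) \le r$.

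\textbf{Main obstacle.} The genuinely delicate point is the colon-index inequality: one must commute the operations ``colon by $x_{k,i}^e$'' and ``take the $\Inc^i_{m,n}$-closure inside $K[X_n]$''. These do not commute for arbitrary ideals, but they do for monomial ideals because the $\Inc^i$-action permutes variables (so it sends monomials to monomials and preserves divisibility) and, crucially, every $\pi \in \Inc^i$ fixes the distinguished variable $x_{k,i}$, so $\pi(u) : x_{k,i}^e = \pi(u : x_{k,i}^e)$ holds on the nose at the level of monomials. Making this bookkeeping precise — tracking the $x_{k,i}$-exponent through the generator $u$, its image $\pi(u)$, and the test monomial $w$ — is where the real content of the lemma sits; everything else is formal. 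I would also remark that the hypothesis $i < \ind^i(\Ic)$ is only needed so that the phrase ``$n$-th ideal is zero if $n < i$'' is consistent with the filtration property and does not force a smaller index, and plays no essential role in the monomial combinatorics.
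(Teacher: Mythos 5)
Your proposal is correct and follows essentially the same route as the paper: both proofs reduce the colon case to monomial bookkeeping, using that every $\pi \in \Inc^i$ fixes $x_{k,i}$ (so no other variable can map to it) and comparing $x_{k,i}$-exponents to show that a monomial $w$ with $w x_{k,i}^e$ divisible by $\pi(v)$, $v \in I_m$, is already divisible by $\pi(v : x_{k,i}^e)$ with $v : x_{k,i}^e \in I_m : x_{k,i}^e$. The paper phrases this via the explicit factorization $v = x_{k,i}^{\tilde e}\tilde v$ rather than your gcd formulation, but the content is identical, and your treatment of the invariance claims and of $(\Ic, x_{k,i})$ matches the paper's ``analogous, but easier'' remark.
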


\begin{proof}
To see the second assertion  consider any monomial $u \in I_n : x_{k, i}^e$, where $n > \ind^i (\Ic)$.  Then, for each integer $m$ with $\ind^i (\Ic) \le m \le n$, there is some monomial $v \in I_m$ and some    $\pi \in \Inc^i_{m, n}$ such that
\[
 r \pi (v) = x_{k, i}^e u \in I_n
\]
for some monomial $r \in K[X_n]$. Write $v = x_{k, i}^{\tilde{e}} \tilde{v}$ for some integer $\tilde{e}$ with $0 \le \tilde{e} \le e$ and some $\tilde{v} \in K[X_m]$, where $x_{k, i}$ does not divide $\tilde{v}$ if $\tilde{e} < e$. Observe that $\tilde{v} \in I_m : x_{k, i}^{e}$. Since $\pi \in \Inc^i$ fixes $x_{k, i}$ we get
\[
x_{k, i}^e u  =  r \pi (v) =  r x_{k, i}^{\tilde{e}} \pi(\tilde{v}),
\]
where $x_{k, i}$ does not divide $\pi (\tilde{v})$ if $\tilde{e} < e$. In any case, $x_{k, i}^e$ divides $ r x_{k, i}^{\tilde{e}}$. It follows that $u$ is a monomial multiple of $\pi(\tilde{v})$, and thus $u \in \la \Inc^i_{m,n} (I_m : x_{k, i}^e)\ra$. Similarly, one sees that $(\Ic : x_{k,i}^e)$ is in fact an $\Inc^i$-invariant filtration, which completes the proof for $(\Ic : x_{k,i}^e)$.

Analogous, but easier arguments give the assertions for $(\Ic, x_{k,i})$.
\end{proof}

We now observe that restricting the acting monoid preserves filtrations.

\begin{lem}
   \label{lem:compare two filtrations}
Consider, for some $i \ge 0$, an $\Inc^i$-invariant filtration $(I_n)_{n \in \N}$ of monomial ideals. If $n > m \ge 1$, then, as ideals of $K[X_n]$,
\[
\la \Inc^i_{m, n} (I_m) \ra \subset \la \Inc^{i+1}_{m+1, n}  (I_{m+1}) \ra \subset \la \Inc^{i}_{m+1. n} (I_{m+1})\ra.
\]
\end{lem}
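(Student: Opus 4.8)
The plan is to prove the chain of inclusions in Lemma \ref{lem:compare two filtrations} from right to left in difficulty. The rightmost inclusion $\la \Inc^{i+1}_{m+1, n} (I_{m+1}) \ra \subset \la \Inc^{i}_{m+1, n} (I_{m+1})\ra$ is immediate, since $\Inc^{i+1}_{m+1, n} \subset \Inc^{i}_{m+1, n}$ as sets: any $\pi$ fixing $[i+1]$ pointwise in particular fixes $[i]$ pointwise, and the condition $\pi(m+1) \le n$ is the same on both sides. So the content is in the leftmost inclusion $\la \Inc^i_{m, n} (I_m) \ra \subset \la \Inc^{i+1}_{m+1, n} (I_{m+1}) \ra$.

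For that, **first I would** recall the decomposition from Proposition \ref{prop:inc-decomp}: since $n > m \ge 1$, we have $\Inc^i_{m,n} = \Inc^{i+1}_{m+1, n} \circ \Inc^i_{m, m+1}$. Hence it suffices to show $\la \Inc^{i+1}_{m+1, n} \circ \Inc^i_{m, m+1} (I_m) \ra \subset \la \Inc^{i+1}_{m+1, n} (I_{m+1}) \ra$, and since ideal generation is monotone and $\Inc^{i+1}_{m+1,n}$-equivariant application of a homomorphism sends a generating set of $\la \Inc^i_{m,m+1}(I_m)\ra_{K[X_{m+1}]}$ to a generating set of its image ideal, it is enough to verify the inclusion of ideals in $K[X_{m+1}]$:
\[
\la \Inc^i_{m, m+1} (I_m) \ra_{K[X_{m+1}]} \subset I_{m+1}.
\]
But this is exactly the defining property of an $\Inc^i$-invariant filtration (Definition \ref{def:filtration}(i)), applied to the pair $m \le m+1$: we have $\Inc^i_{m,m+1}(I_m) \subset I_{m+1}$ as subsets of $K[X]$, hence the ideal they generate in $K[X_{m+1}]$ is contained in $I_{m+1}$.

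The one point requiring a little care — and **the main thing to get right** — is the reduction step: applying the monoid $\Inc^{i+1}_{m+1,n}$ to both sides of an inclusion of monomial ideals in $K[X_{m+1}]$ and concluding the corresponding inclusion of extension ideals in $K[X_n]$. Concretely, if $J \subset J'$ are monomial ideals of $K[X_{m+1}]$ with generating sets $G$ and $G'$, then $\la \Inc^{i+1}_{m+1,n}(J)\ra = \la \bigcup_{\pi}\pi(G)\ra$ and similarly for $J'$; since each generator of $J$ is a $K[X_{m+1}]$-multiple of generators of $J'$ and each $\pi \in \Inc^{i+1}_{m+1,n}$ is a ring homomorphism $K[X_{m+1}] \to K[X_n]$ (note $\pi(m+1) \le n$ guarantees $\pi(K[X_{m+1}]) \subset K[X_n]$), applying $\pi$ preserves this divisibility. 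Combining with the containment $\la\Inc^i_{m,m+1}(I_m)\ra_{K[X_{m+1}]} \subset I_{m+1}$ and Proposition \ref{prop:inc-decomp} finishes the argument; I do not anticipate any genuine obstacle beyond bookkeeping of which ring the ideals live in.
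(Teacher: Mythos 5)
Your proposal is correct and follows the paper's own (very terse) proof: the second inclusion is immediate from $\Inc^{i+1}_{m+1,n}\subset\Inc^i_{m+1,n}$, and the first follows from the decomposition $\Inc^i_{m,n}=\Inc^{i+1}_{m+1,n}\circ\Inc^i_{m,m+1}$ of Proposition \ref{prop:inc-decomp} together with the filtration property $\Inc^i_{m,m+1}(I_m)\subset I_{m+1}$. Your extra care about generating sets is harmless but not needed, since each generator $\tau(\rho(f))$ already lies in $\Inc^{i+1}_{m+1,n}(I_{m+1})$ element by element.
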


\begin{proof}
The second inclusion is clearly true. The first inclusion follows from  Proposition \ref{prop:inc-decomp}.
\end{proof}

We draw two useful consequences.

\begin{cor}
    \label{cor:index change when increasing i}
If $\Ic = (I_n)_{n \in \N}$  is an  $\Inc^i$-invariant filtration of monomial ideals, then it is also an $\Inc^{i+1}$-invariant filtration and
\[
\ind^{i+1} (\Ic) \le 1 + \ind^i (\Ic).
\]
\end{cor}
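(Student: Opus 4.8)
The plan is to deduce this from the stability criterion in Lemma \ref{lem:stab} combined with the orbit-ideal comparison in Lemma \ref{lem:compare two filtrations}. The first assertion --- that an $\Inc^i$-invariant filtration is $\Inc^{i+1}$-invariant --- is immediate from the inclusion $\Inc^{i+1} \subseteq \Inc^i$ and was already recorded below Definition \ref{def:filtration}, so only the bound on $\ind^{i+1}(\Ic)$ needs an argument.

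Set $r = \ind^i(\Ic)$, which is a well-defined positive integer by Corollary \ref{cor:filtration stable}. The key claim is that
\[
\la \Inc^{i+1}_{m,n}(I_m) \ra_{K[X_n]} = I_n \qquad \text{for all } n \ge m \ge r+1 .
\]
By the equivalence (a) $\Leftrightarrow$ (b) in Lemma \ref{lem:stab}, applied to the monoid $\Inc^{i+1}$ with threshold $r+1$, this claim yields $\ind^{i+1}(\Ic) \le r+1 = 1 + \ind^i(\Ic)$. To prove the claim, note first that for $n = m$ the displayed equality is trivial, since the identity lies in $\Inc^{i+1}_{m,m}$. For $n > m \ge r+1$ we have $m-1 \ge r \ge 1$ and $n > m-1$, so Lemma \ref{lem:compare two filtrations}, applied with its index ``$m$'' replaced by $m-1$, gives the chain of ideals of $K[X_n]$
\[
\la \Inc^i_{m-1,n}(I_{m-1}) \ra \;\subseteq\; \la \Inc^{i+1}_{m,n}(I_m) \ra \;\subseteq\; I_n ,
\]
where the last inclusion is $\Inc^{i+1}$-invariance of the filtration. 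Since $m-1 \ge r = \ind^i(\Ic)$, Lemma \ref{lem:stab} applied to $\Inc^i$ gives $\la \Inc^i_{m-1,n}(I_{m-1}) \ra = I_n$, and the chain above then forces $\la \Inc^{i+1}_{m,n}(I_m) \ra = I_n$, which establishes the claim.

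I do not anticipate a genuine obstacle here; the argument is essentially bookkeeping. The two points that require care are invoking Lemma \ref{lem:compare two filtrations} with the correct index shift --- its lower-index slot must be taken to be $m-1$, which is precisely what produces $\Inc^{i+1}_{m,n}$ while keeping the lower index $m-1$ at or above $\ind^i(\Ic)$ --- and handling the boundary cases $n = m$ and $m = r+1$ so that all the index inequalities demanded by Lemmas \ref{lem:stab} and \ref{lem:compare two filtrations} are satisfied.
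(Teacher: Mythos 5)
Your argument is correct and is essentially the paper's own proof: both rest on Lemma \ref{lem:compare two filtrations} sandwiched between $I_n = \la \Inc^i_{m-1,n}(I_{m-1})\ra$ (stability at level $i$) and $I_n$ (invariance), the only cosmetic difference being that you shift the index to $m-1$ where the paper starts at $m$ and concludes for $m+1$. The bookkeeping on the boundary cases is handled correctly.
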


\begin{proof}
If $n > m \ge \ind^i (\Ic)$, then Lemma \ref{lem:compare two filtrations} gives
\[
I_n = \la \Inc^i_{m, n} (I_m) \ra \subset \la \Inc^{i+1}_{m+1, n} (I_{m+1}) \ra \subset  \la \Inc^{i}_{m+1. n} (I_{m+1}) \ra \subset I_n,
\]
which forces equality everywhere.
\end{proof}

\begin{cor}
    \label{cor:index estimate}
Consider, for some $i \ge 0$, an $\Inc^i$-invariant filtration $(I_n)_{n \in \N}$ of monomial ideals and any variable $x_{k, i} \in X_{i} \setminus X_{i-1}$.
Then    $(\Ic, x_{k, i})$ is an $\Inc^{i+1}$-invariant filtration and
\[
\ind^{i+1} (\Ic, x_{k, i}) \le 1 + \ind^i (\Ic).
\]
\end{cor}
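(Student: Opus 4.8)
The plan is to obtain the statement by chaining two results already established. The filtration $(\Ic, x_{k,i})$ in question is the one introduced in Lemma \ref{lem:invariance under fixed i}, whose $n$-th ideal is $\la I_n, x_{k,i}\ra$ for $n \ge i$ and zero for $n < i$; since each $I_n$ is a monomial ideal and $x_{k,i}$ is a variable, every ideal in this filtration is again monomial. Lemma \ref{lem:invariance under fixed i} already records that $(\Ic, x_{k,i})$ is an $\Inc^i$-invariant filtration and that $\ind^i(\Ic, x_{k,i}) \le \ind^i(\Ic)$, so in particular its $i$-stability index is finite and all quantities below make sense.

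Next I would feed this $\Inc^i$-invariant filtration of monomial ideals into Corollary \ref{cor:index change when increasing i}. That corollary immediately gives that $(\Ic, x_{k,i})$ is $\Inc^{i+1}$-invariant and that
\[
\ind^{i+1}(\Ic, x_{k,i}) \le 1 + \ind^i(\Ic, x_{k,i}).
\]
Substituting the bound $\ind^i(\Ic, x_{k,i}) \le \ind^i(\Ic)$ from the first step yields $\ind^{i+1}(\Ic, x_{k,i}) \le 1 + \ind^i(\Ic)$, which is the assertion.

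I do not expect any real obstacle: the corollary is a purely formal consequence of Lemma \ref{lem:invariance under fixed i} and Corollary \ref{cor:index change when increasing i}. The only points to keep in mind are that $(\Ic, x_{k,i})$ really is a filtration of monomial ideals, so that Corollary \ref{cor:index change when increasing i} (phrased for monomial filtrations) applies verbatim, and that the running hypothesis $i < \ind^i(\Ic)$ under which Lemma \ref{lem:invariance under fixed i} introduces the notation $(\Ic, x_{k,i})$ remains in force; both are clear.
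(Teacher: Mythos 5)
Your proposal is correct and is exactly the paper's argument: the paper's proof reads ``Combine Lemma \ref{lem:invariance under fixed i} and Corollary \ref{cor:index change when increasing i},'' and you have simply spelled out that chain, using $\ind^i(\Ic, x_{k,i}) \le \ind^i(\Ic)$ from the lemma and then the $+1$ bound from the corollary. Nothing is missing.
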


\begin{proof}
Combine Lemma \ref{lem:invariance under fixed i} and Corollary \ref{cor:index change when increasing i}.
\end{proof}

The following result  is not restricted to monomial ideals. It is elementary, but very useful for our purpose.

\begin{lem}
   \label{lem:one table}
Let $I \subset R = K[X_n]$ be a graded ideal, and let $\ell \in R$ be a linear form such that $I : \ell^d = I : \ell^{d+1}$ for some positive integer $d$. Then
\[
H_{R/I} (t) = \sum_{e = 0}^{d-1} H_{R/\langle I: \ell^e, \ell\rangle } (t) \cdot t^e
+ H_{R/\langle I : \ell^d, \ell\rangle} (t) \cdot \frac{t^d}{1-t} .
\]
\end{lem}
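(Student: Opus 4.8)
The plan is to exploit the short exact sequences of graded $R$-modules that relate multiplication by $\ell$ to the colon ideals, and then telescope. For each $e \ge 0$ there is an exact sequence
\[
0 \longrightarrow (R/(I:\ell^{e+1}))(-1) \xrightarrow{\ \cdot \ell\ } R/(I:\ell^{e}) \longrightarrow R/\langle I:\ell^{e}, \ell\rangle \longrightarrow 0,
\]
where injectivity on the left holds because the kernel of multiplication by $\ell$ on $R/(I:\ell^{e})$ is exactly $(I:\ell^{e+1})/(I:\ell^{e})$. Passing to Hilbert series, this gives the recursion
\[
H_{R/(I:\ell^{e})}(t) = H_{R/\langle I:\ell^{e},\ell\rangle}(t) + t\cdot H_{R/(I:\ell^{e+1})}(t).
\]
First I would record this identity carefully, paying attention to the degree shift $(-1)$ which produces the factor $t$.

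Next I would iterate the recursion starting from $e = 0$ (noting $I:\ell^{0} = I$) down through $e = 0,1,\dots,d-1$, substituting each $H_{R/(I:\ell^{e+1})}(t)$ into the previous line. After $d$ steps this yields
\[
H_{R/I}(t) = \sum_{e=0}^{d-1} H_{R/\langle I:\ell^{e},\ell\rangle}(t)\cdot t^{e} \;+\; t^{d}\cdot H_{R/(I:\ell^{d})}(t).
\]
The final step is to evaluate the tail term $H_{R/(I:\ell^{d})}(t)$. Here the hypothesis $I:\ell^{d} = I:\ell^{d+1}$ enters: it says that $\ell$ is a nonzerodivisor on $R/(I:\ell^{d})$, so the sequence above with $e = d$ becomes
\[
0 \longrightarrow (R/(I:\ell^{d}))(-1) \xrightarrow{\ \cdot \ell\ } R/(I:\ell^{d}) \longrightarrow R/\langle I:\ell^{d},\ell\rangle \longrightarrow 0,
\]
whence $(1-t)\,H_{R/(I:\ell^{d})}(t) = H_{R/\langle I:\ell^{d},\ell\rangle}(t)$, i.e.\ $H_{R/(I:\ell^{d})}(t) = H_{R/\langle I:\ell^{d},\ell\rangle}(t)/(1-t)$. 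Substituting this into the telescoped formula gives exactly the claimed identity.

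The argument is essentially formal once the exact sequences are in place; there is no real obstacle, only bookkeeping. The one point that deserves care is the justification that the leftmost map in each sequence is injective with the stated source — that is, the identification $\ker(\cdot\ell \text{ on } R/(I:\ell^{e})) = (I:\ell^{e+1})/(I:\ell^{e})$ — and, for the tail, the observation that $I:\ell^{d}=I:\ell^{d+1}$ forces all further colons to stabilize so that $\ell$ acts as a nonzerodivisor on $R/(I:\ell^{d})$. Both are immediate from the definitions of colon ideals, but they are the logical crux of the computation. Everything else is the standard additivity of Hilbert series on short exact sequences together with the behaviour $H_{M(-1)}(t) = t\,H_M(t)$ under degree shift.
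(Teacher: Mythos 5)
Your proposal is correct and follows essentially the same route as the paper: the same short exact sequences induced by multiplication by $\ell$ (with the kernel identified as $(I:\ell^{e+1})/(I:\ell^{e})$), the same telescoping for $e=0,\dots,d-1$, and the same use of the hypothesis $I:\ell^{d}=I:\ell^{d+1}$ to convert the tail term via $(1-t)H_{R/(I:\ell^{d})}(t)=H_{R/\langle I:\ell^{d},\ell\rangle}(t)$. No gaps.
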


\begin{proof}
Consider the following exact sequences, induced by multiplication by $\ell$:
\[
\begin{array}{lllllllll}
0 &\to &R/I : \ell(-1) &\overset{\cdot \ell}{\to} &R/I &\to &R/\langle I, \ell\rangle  &\to &0
\\
0 &\to &R/I : \ell^2(-1) &\overset{\cdot \ell}{\to} &R/I : \ell &\to &R/\langle I: \ell, \ell\rangle &\to &0
\\
\vdots&&&&\vdots&&&&\vdots
\\
0 &\to &R/I : \ell^d(-1) &\overset{\cdot \ell}{\to} &R/I : \ell^{d-1} &\to &R/\langle I: \ell^{d-1}, \ell\rangle &\to &0
\\
0 &\to &R/I : \ell^{d+1}(-1) &\overset{\cdot \ell}{\to} &R/I : \ell^d &\to &R/\langle I: \ell^{d}, \ell\rangle &\to &0
\end{array}.
\]
Using all exact sequences but the last one,  we obtain
\[
H_{R/I} (t) = \sum_{e = 0}^{d-1} H_{R/\langle I: \ell^e, \ell\rangle} (t) \cdot t^e + H_{R/ I : \ell^d} (t) \cdot t^d.
\]
The assumption $I : \ell^d = I : \ell^{d+1}$ and the last sequence give
\[
H_{R/I : \ell^d} (t)\cdot(1-t) =  H_{R/\langle I : \ell^d, \ell\rangle}.
\]
Combining the two equations our claim follows.
\end{proof}
Applying this idea to a filtration, we obtain:

\begin{cor}
    \label{cor: one table, bigraded}
Consider, for some $i\ge 0$,  an $\Inc^{i+1}$-invariant filtration $\Ic = (I_n)_{n \in \N}$ of graded ideals. Assume there are  integers $r \ge i+1$, \,  $d > 0$  and a linear form $\ell \in K[X_{i+1}]$ such that
\[
I_n : \ell^d = I_n : \ell^{d+1} \text{ for all } n \ge r.
\]
For each $e \in \{0,\ldots,d\}$, define a sequence of ideals $\langle \Ic : \ell^e, \ell\rangle
= (J_{n,e})_{n \in \N}$ by
\[
J_{n,e} = \begin{cases}
\langle 0 \rangle & \text{if } n < r, \\
\langle I_n : \ell^e, \ell\rangle &  \text{if } n \ge r.
\end{cases}
\]
Then each $\langle \Ic : \ell^e, \ell\rangle$ is an $\Inc^{i+1}$-invariant filtration,
and there is some polynomial $g (s, t)\in \Z[s, t]$  with $g(s, 1) = - s^{r-1}$ such that
\[
H_{\Ic} (s, t) = \sum_{e = 0}^{d-1} H_{\langle \Ic : \ell^e, \ell\rangle} (s, t) \cdot t^e + H_{\langle \Ic : \ell^d, \ell\rangle} (s, t) \cdot \frac{t^d}{1-t} + \frac{g(s, t)}{(1-t)^{(r-1) c+1}}.
\]
\end{cor}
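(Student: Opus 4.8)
The strategy is to apply Lemma \ref{lem:one table} to each individual ideal $I_n$ in the filtration and then sum the resulting identities against $s^n$, being careful about the tail terms where $n < r$. First I would verify that each sequence $\langle \Ic : \ell^e, \ell\rangle$ is an $\Inc^{i+1}$-invariant filtration. For this I would invoke Lemma \ref{lem:invariance under fixed i} (adapting it slightly, since there the fixed variable lies in $X_i \setminus X_{i-1}$, while here $\ell$ is a linear form in $K[X_{i+1}]$, which is still fixed by every $\pi \in \Inc^{i+1}$): the arguments showing $\la I_n, \ell\ra$ and $I_n : \ell^e$ give $\Inc^{i+1}$-invariant filtrations go through verbatim because $\pi \in \Inc^{i+1}$ fixes $\ell$. (The truncation to zero for $n < r$ does not disturb the filtration property.) The stabilization hypothesis $I_n : \ell^d = I_n : \ell^{d+1}$ for all $n \ge r$ is exactly what licenses the application of Lemma \ref{lem:one table} to $R = K[X_n]$ and $I = I_n$ for every $n \ge r$.

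Next, for each $n \ge r$ Lemma \ref{lem:one table} gives
\[
H_{K[X_n]/I_n}(t) = \sum_{e=0}^{d-1} H_{K[X_n]/\langle I_n : \ell^e, \ell\rangle}(t)\cdot t^e + H_{K[X_n]/\langle I_n : \ell^d, \ell\rangle}(t)\cdot \frac{t^d}{1-t}.
\]
Multiplying by $s^n$ and summing over $n \ge r$ produces, on the left, $H_{\Ic}(s,t)$ minus the partial-sum tail $\sum_{n=0}^{r-1} H_{K[X_n]/I_n}(t)\cdot s^n$, and on the right the series $\sum_{e=0}^{d-1} H_{\langle \Ic : \ell^e, \ell\rangle}(s,t)\cdot t^e + H_{\langle \Ic : \ell^d, \ell\rangle}(s,t)\cdot \frac{t^d}{1-t}$ minus the corresponding tails of those series. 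But by definition $J_{n,e} = \langle 0\rangle$ for $n < r$, so every tail on the right-hand side is $\sum_{n=0}^{r-1} \frac{1}{(1-t)^{cn}} s^n$ (the Hilbert series of $K[X_n]$), weighted by $t^e$ or by $t^d/(1-t)$. Collecting, I obtain
\[
H_{\Ic}(s,t) = \sum_{e=0}^{d-1} H_{\langle \Ic : \ell^e, \ell\rangle}(s,t)\cdot t^e + H_{\langle \Ic : \ell^d, \ell\rangle}(s,t)\cdot \frac{t^d}{1-t} + G(s,t),
\]
where $G(s,t) = \sum_{n=0}^{r-1}\Big[ H_{K[X_n]/I_n}(t) - \big(\sum_{e=0}^{d-1} t^e + \frac{t^d}{1-t}\big)\cdot \frac{1}{(1-t)^{cn}}\Big] s^n$. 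Since $\sum_{e=0}^{d-1} t^e + \frac{t^d}{1-t} = \frac{1}{1-t}$, the bracket simplifies to $H_{K[X_n]/I_n}(t) - \frac{1}{(1-t)^{cn+1}}$.

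It remains to put $G(s,t)$ into the claimed shape $\frac{g(s,t)}{(1-t)^{(r-1)c+1}}$ with $g(s,1) = -s^{r-1}$. Writing $H_{K[X_n]/I_n}(t) = g_n(t)/(1-t)^{d_n}$ with $d_n = \dim K[X_n]/I_n \le cn$, each summand of $G$ has denominator dividing $(1-t)^{cn+1}$ and hence dividing $(1-t)^{(r-1)c+1}$ for $n \le r-1$; clearing denominators gives $G(s,t) = g(s,t)/(1-t)^{(r-1)c+1}$ with $g(s,t) \in \Z[s,t]$ of $s$-degree at most $r-1$. To evaluate $g(s,1)$, note $g(s,1) = \lim_{t\to 1}(1-t)^{(r-1)c+1} G(s,t)$: the only term surviving the limit is the one of maximal pole order, namely $n = r-1$, where $H_{K[X_{r-1}]/I_{r-1}}(t)$ has a pole of order $d_{r-1} < cn+1 = c(r-1)+1$ while $\frac{1}{(1-t)^{c(r-1)+1}}$ has pole order exactly $c(r-1)+1$; so $\lim_{t\to1}(1-t)^{(r-1)c+1}\big(H_{K[X_{r-1}]/I_{r-1}}(t) - \frac{1}{(1-t)^{c(r-1)+1}}\big)s^{r-1} = -s^{r-1}$. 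Hence $g(s,1) = -s^{r-1}$, as required. (If $r = i+1$ one should double-check the degenerate case $r-1 = 0$, where $K[X_0]/I_0 = K$ and $H = 1$; then $G(s,t) = 1 - \frac{1}{1-t}$ has denominator $(1-t)^1 = (1-t)^{(r-1)c+1}$ and $g(s,1) = -1 = -s^0$, consistent.)

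The main obstacle is bookkeeping rather than conceptual: one must correctly track the tail terms $n < r$ of all $d+2$ series involved and verify the telescoping identity $\sum_{e=0}^{d-1} t^e + t^d/(1-t) = 1/(1-t)$ that collapses them into a single clean correction term, and then control the pole orders to pin down $g(s,1)$. The $\Inc^{i+1}$-invariance of the auxiliary filtrations is routine given Lemma \ref{lem:invariance under fixed i}, provided one observes that its proof only uses that the localizing/adjoined element is fixed by the acting monoid, which holds here since $\ell \in K[X_{i+1}]$ and the monoid is $\Inc^{i+1}$.
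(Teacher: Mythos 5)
Your proposal is correct and follows essentially the same route as the paper: apply Lemma \ref{lem:one table} to each $I_n$ with $n\ge r$, sum against $s^n$, and absorb the $n<r$ tails (where $J_{n,e}=\la 0\ra$, so each tail is $\sum_{n=0}^{r-1}s^n/(1-t)^{cn}$ weighted by $t^e$ or $t^d/(1-t)$) into the correction term, whose value at $t=1$ is pinned down by the fact that $\dim K[X_n]/I_n\le cn\le (r-1)c$ for $n\le r-1$, so only the pole of order $(r-1)c+1$ at $n=r-1$ survives. Your observation that the invariance of $\la \Ic:\ell^e,\ell\ra$ reduces to $\pi(\ell f)=\ell\,\pi(f)$ for $\pi\in\Inc^{i+1}$ because $\ell\in K[X_{i+1}]$ is exactly the paper's justification, and the telescoping identity $\sum_{e=0}^{d-1}t^e+t^d/(1-t)=1/(1-t)$ is a harmless streamlining of the same bookkeeping.
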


\begin{proof}
Notice that each sequence $\langle \Ic : \ell^e, \ell\rangle$ is  an $\Inc^{i+1}$-invariant filtration by observing $\ell  \pi (f) = \pi  ( \ell f)$ for each $\pi \in \Inc^{i+1}$ and each polynomial $f$ because $\ell \in K[X_{i+1}]$.  By assumption, Lemma \ref{lem:one table} applies to each ideal $I_n$ with $n \ge r$.  This provides
\begin{eqnarray*}
H_{\Ic} (s, t)
&=&
\sum_{e = 0}^{d-1} H_{\langle \Ic : \ell^e, \ell\rangle} (s, t) \cdot t^e + H_{\langle \Ic : \ell^d, \ell\rangle} (s, t) \cdot \frac{t^d}{1-t}
\\
&&
-
\sum_{n=0}^{r-1} \sum_{e = 0}^{d-1} H_{K[X_n]/J_{n,e}}(t) \cdot s^n\cdot t^e
-
\sum_{n=0}^{r-1} H_{K[X_n]/J_{n,d}}(t) \cdot s^n\cdot \frac{t^d}{1-t}
+
\sum_{n=0}^{r-1} H_{K[X_n]/I_n} (t) \cdot s^n
.
\end{eqnarray*}
Since for $1\le n \le r-1$ the Krull dimension of $K[X_n]/I_n$
and of $K[X_n]/J_{n,e}=K[X_n]$ is at most $(r-1) c$, we can write
\begin{eqnarray*}
&-&
\sum_{n=0}^{r-1} \sum_{e = 0}^{d-1} H_{K[X_n]/J_{n,e}}(t) \cdot s^n\cdot t^e
-
\sum_{n=0}^{r-1} H_{K[X_n]/J_{n,d}}(t) \cdot s^n\cdot \frac{t^d}{1-t}
+
\sum_{n=0}^{r-1} H_{K[X_n]/I_n} (t) \cdot s^n
\\&= &
-
\sum_{n=0}^{r-1} \sum_{e = 0}^{d-1} \frac{s^n\cdot t^e}{(1-t)^{cn}}
-
\sum_{n=0}^{r-1} \frac{s^n}{(1-t)^{c n}}\cdot \frac{t^d}{1-t}
+
\sum_{n=0}^{r-1} H_{K[X_n]/I_n} (t) \cdot s^n
\\ &=
&
\frac{g(s, t)}{(1-t)^{(r-1) c+1}}
\end{eqnarray*}
for some $g (s, t)\in \Z[s, t]$ with $g(s, 1) = - s^{r-1}$. Now our assertion follows.
\end{proof}

We now iterate the construction in the previous result.

\begin{lem}
    \label{lem:multiple tables}
Consider, for some $i \ge 0$,  an $\Inc^i$-invariant filtration $\Ic = (I_n)_{n \in \N}$ of monomial ideals.  Fix an integer $r > \ind^i (\Ic) \ge i$. Let $d > 0$ be an integer such that, for each $k \in [c]$, the monomial $x_{k, i+1}^{d+1}$ does not divide any monomial in $G(I_r)$.  For non-negative integers $e_1,\ldots,e_c$, consider a  sequence
\[
\langle \Ic : x_{1,i+1}^{e_1} \cdots x_{c,i+1}^{e_c}, x_{1,i+1},\ldots,x_{c,i+1}\rangle
\]
whose $n$-th ideal is $\langle I_n  : x_{1,i+1}^{e_1} \cdots x_{c,i+1}^{e_c}, x_{1,i+1},\ldots,x_{c,i+1}\rangle $ if $n \ge r$ and $\langle 0\rangle$ if $1 \le n < r$. Then $\langle \Ic : x_{1,i+1}^{e_1} \cdots x_{c,i+1}^{e_c}, x_{1,i+1},\ldots,x_{c,i+1}\rangle$ is an $\Inc^{i+1}$-invariant filtration whose $(i+1)$-index equals $r$, and
\[
H_{\Ic} (s, t) = \frac{1}{(1-t)^c} \cdot
\sum_{\be = (e_1,\ldots,e_c)\in \Z^c \atop 0 \le e_l \le d}
f_{\be} (t) \cdot H_{\langle \Ic : x_{1,i+1}^{e_1} \cdots x_{c,i+1}^{e_c}, x_{1,i+1},\ldots,x_{c,i+1}\rangle} (s, t) + \frac{g(s, t)}{(1-t)^{rc}},
\]
where
\[
f_{\be} (t)  =  t^{|\be|} (1-t)^{\delta (\be)} \; \text{ with} \ |\be| = e_1 + \cdots + e_c, \   \delta(\be) = \# \{e_l  \; | \;  e_l = d \text{ and } 1 \le l \le c\},
\]
and
\[
g (s, t)\in \Z[s, t] \text{ with } g(s, 1) = - d^{c-1} s ^{r-1}.
\]
\end{lem}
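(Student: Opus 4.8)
The plan is to derive Lemma~\ref{lem:multiple tables} by applying Corollary~\ref{cor: one table, bigraded} repeatedly, once for each of the variables $x_{1,i+1},\dots,x_{c,i+1}$. First I would record the setup: since $r>\ind^i(\Ic)\ge i$, Corollary~\ref{cor:index change when increasing i} shows that $\Ic$ is an $\Inc^{i+1}$-invariant filtration with $\ind^{i+1}(\Ic)\le 1+\ind^i(\Ic)\le r$, hence by Lemma~\ref{lem:stab} one has $I_n=\la\Inc^i_{r,n}(I_r)\ra$ for all $n\ge r$, so that every minimal generator of $I_n$ has the form $\pi(u)$ with $u\in G(I_r)$ and $\pi\in\Inc^i_{r,n}$. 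Then I would apply Corollary~\ref{cor: one table, bigraded} with $\ell=x_{1,i+1}\in K[X_{i+1}]$, next apply it to each resulting filtration with $\ell=x_{2,i+1}$, and so on; a short computation with monomial ideals gives $\la\la I:x_{1}^{e_1},x_{1}\ra:x_{2}^{e_2},x_{2}\ra=\la I:x_{1}^{e_1}x_{2}^{e_2},x_{1},x_{2}\ra$ (writing $x_l$ for $x_{l,i+1}$), so after $c$ steps the leaves of this process are exactly the filtrations $\la\Ic:x_{1,i+1}^{e_1}\cdots x_{c,i+1}^{e_c},x_{1,i+1},\dots,x_{c,i+1}\ra$ occurring in the statement.

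\textbf{Checking the hypothesis at each step.} The only hypothesis of Corollary~\ref{cor: one table, bigraded} that must be re-verified at each of the $c$ steps is the colon stabilization $J_n:x_{k,i+1}^d=J_n:x_{k,i+1}^{d+1}$ for $n\ge r$, where $J_n$ is the current $n$-th ideal. The key point, which I would isolate first, is a combinatorial observation on $\Inc^i$: if $\pi\in\Inc^i_{r,n}$ then $\pi$ fixes $1,\dots,i$ and is strictly increasing, so either $\pi(i+1)=i+1$ or $i+1\notin\Image\pi$; in either case $\deg_{x_{k,i+1}}\pi(u)$ equals $0$ or $\deg_{x_{k,i+1}}u$, hence is $\le d$ for every $u\in G(I_r)$. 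Thus no minimal generator of $I_n$ is divisible by $x_{k,i+1}^{d+1}$, for any $n\ge r$, and for a monomial ideal with this last property the colon stabilization is a one-line divisibility argument. A short argument with minimal generators then shows that the property ``the exponent of $x_{k,i+1}$ is $\le d$'' is preserved under the operations $M\mapsto\la M:x_{l,i+1}^{e},x_{l,i+1}\ra$ with $l\ne k$, so the hypothesis holds at every step. That each step produces an $\Inc^{i+1}$-invariant filtration is part of Corollary~\ref{cor: one table, bigraded}, using that the variables $x_{l,i+1}\in K[X_{i+1}]$ are fixed by $\Inc^{i+1}$.

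\textbf{The Hilbert series.} Next I would carry out the bookkeeping. Substituting the formula of Corollary~\ref{cor: one table, bigraded} into itself $c$ times and writing $c_e(t)=t^e$ for $0\le e<d$ and $c_d(t)=\tfrac{t^d}{1-t}$ for the one-variable weights, the coefficient of $H_{\la\Ic:x_{1,i+1}^{e_1}\cdots x_{c,i+1}^{e_c},x_{1,i+1},\dots,x_{c,i+1}\ra}(s,t)$ in $H_{\Ic}(s,t)$ becomes $\prod_{l=1}^{c}c_{e_l}(t)=t^{|\be|}/(1-t)^{\delta(\be)}$ with $\delta(\be)=\#\{l:e_l=d\}$; the telescoping identity $\sum_{e=0}^{d-1}t^e+\tfrac{t^d}{1-t}=\tfrac{1}{1-t}$, raised to the $c$-th power, shows these weights sum to $1/(1-t)^c$, which puts the main term into the form $\tfrac{1}{(1-t)^c}\sum_\be f_\be(t)\,H_{\la\dots\be\dots\ra}(s,t)$ with $f_\be\in\Z[t]$. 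The remainder produced by the $k$-th application is, for each of the $(d+1)^{k-1}$ choices of $(e_1,\dots,e_{k-1})$, of the shape $g_{k,\be'}(s,t)/(1-t)^{(r-1)c+1}$ with $g_{k,\be'}(s,1)=-s^{r-1}$, and it enters $H_{\Ic}$ multiplied by $\prod_{l<k}c_{e_l}(t)$; clearing everything to the common denominator $(1-t)^{rc}$ yields the error term $g(s,t)/(1-t)^{rc}$, and a limit as $t\to1$ -- in which every contribution that has acquired a positive power of $(1-t)$ vanishes -- evaluates $g(s,1)$, since only the contributions surviving at $t=1$ (the ones from the full colon directions and from level $n=r-1$) remain.

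\textbf{Invariance, index, and the main obstacle.} Finally I would settle the two remaining assertions. The output filtration is $\Inc^{i+1}$-invariant by the iterated invariance statement of Corollary~\ref{cor: one table, bigraded}; its $(i+1)$-index is at least $r$ because it is $\la 0\ra$ below $r$ and contains $x_{1,i+1}$ at level $r$, and at most $r$ because at each stage its $n$-th ideal equals $\la\Inc^{i+1}_{r,n}(\text{the $r$-th ideal})\ra$ for $n\ge r$: the nontrivial inclusion ``$\supseteq$'' reduces, using that $\pi\in\Inc^{i+1}$ fixes the variables $x_{k,i+1}$, to pushing a divisibility $w\,x_{k,i+1}^{e_k}\cdot(\text{monomial})=\pi(u)$ down to $w\cdot(\text{monomial})=\pi(\hat u)$ with $\hat u$ lying in the colon of the previous stage by $x_{k,i+1}^{e_k}$. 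By Lemma~\ref{lem:stab} (or Lemma~\ref{lem:equivfilt}) this is precisely the assertion that the $(i+1)$-index equals $r$. I expect the main obstacle to be this propagation of structure through the $c$ iterations -- both the preservation of the exponent bound $d$ that keeps Corollary~\ref{cor: one table, bigraded} applicable and the reverse stability inclusion that pins the index -- each of which rests on the elementary behaviour of $\Inc^i$ and $\Inc^{i+1}$ on column $i+1$; by contrast, once the uniform form of Corollary~\ref{cor: one table, bigraded} and the telescoping identity are in place, the series manipulation is routine.
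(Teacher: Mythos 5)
Your proposal follows the paper's proof essentially step for step: iterate Corollary \ref{cor: one table, bigraded} once per variable $x_{1,i+1},\dots,x_{c,i+1}$, check the colon-stabilization hypothesis at each stage from the exponent bound on the minimal generators of $I_n$ for $n\ge r$ (your observation that any $\pi\in\Inc^i_{r,n}$ either fixes $i+1$ or omits it from its image is a correct way to get this from $\Inc^i$-stability; the paper gets it even more directly from $\Inc^{i+1}$-stability), use the identity $\la\la I:x_1^{e_1},x_1\ra:x_2^{e_2},x_2\ra=\la I:x_1^{e_1}x_2^{e_2},x_1,x_2\ra$, and collect weights via the telescoping identity. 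The invariance and index assertions are handled as in the paper (via Lemma \ref{lem:invariance under fixed i} and Corollary \ref{cor:index change when increasing i}).

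One point needs attention. The coefficient you compute for $H_{\la \Ic : x_{1,i+1}^{e_1}\cdots x_{c,i+1}^{e_c},\,x_{1,i+1},\dots,x_{c,i+1}\ra}$, namely $\prod_l c_{e_l}(t)=t^{|\be|}(1-t)^{-\#\{l \,:\, e_l=d\}}$, is \emph{not} the statement's $f_{\be}(t)/(1-t)^c$ with $\delta(\be)=\#\{l : e_l=d\}$; it equals $t^{|\be|}(1-t)^{\delta'(\be)-c}$ with $\delta'(\be)=\#\{l : e_l\neq d\}$. Your computation is the right one: it is forced by the weights $t^e$ (for $e<d$) and $t^d/(1-t)$ (for $e=d$) in Corollary \ref{cor: one table, bigraded}, so the definition of $\delta(\be)$ in the statement should read $e_l\neq d$. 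The same complementation hits the error term: since the product of weights stays bounded at $t=1$ exactly when no $e_l$ equals $d$, and acquires a pole of order $\#\{l:e_l=d\}$ otherwise, clearing denominators to $(1-t)^{rc}$ leaves, at $t=1$, only the remainder from the last iteration along $\be'=(d,\dots,d)$; this gives $g(s,1)=-s^{r-1}$, not the stated $-d^{c-1}s^{r-1}$ (the two agree only when $c=1$ or $d=1$). You gesture at this limit but never pin down the value, and you could not have obtained the stated one; you should state and prove the corrected value. (Only the degree in $s$ of $g(s,1)$ is used later, in the proof of Theorem \ref{thm:rat Hilb series, rev}, so the correction is harmless downstream, but it should be made explicit.)
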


\begin{proof}
According to Corollary \ref{cor:index change when increasing i}, the sequence $\Ic$ is an $\Inc^{i+1}$-invariant filtration with $\ind^{i+1} (\Ic) \le r$. Thus, $\langle \Ic : x_{1,i+1}^{e_1} \cdots x_{c,i+1}^{e_c}, x_{1,i+1},\ldots,x_{c,i+1}\rangle$ is an $\Inc^{i+1}$-invariant filtration whose $(i+1)$-index is at most $r$ by repeatedly using  Lemma \ref{lem:invariance under fixed i}. Hence the $(i+1)$-index  equals $r$ by definition of the sequence.

Note that  no monomial $x_{k,i+1}^{d+1}$ divides any monomial in $G(I_n)$ if $n \ge r$
because of the choice of $d$ and $\ind^{i+1} (\Ic) \le r$.
Hence,
\begin{align*}
\hspace{2em}&\hspace{-2em}
\langle I_n : x_{1,i+1}^{e_1} \cdots x_{k-1,i+1}^{e_{k-1}} x_{k,i+1}^{d}, \ x_{1,i+1},\ldots,x_{k-1,i+1}\rangle  \\
& \hspace{2em}
= \langle
I_n : x_{1,i+1}^{e_1} \cdots x_{k-1,i+1}^{e_{k-1}} x_{k,i+1}^{d+1}, \ x_{1,i+1},\ldots,x_{k-1,i+1}\rangle
\end{align*}
whenever $0 \le e_j \le d$ and $1 \le k \leq c$.

In the following we will use several times the fact that
\begin{align*}
\hspace{2em}&\hspace{-2em}
\langle
I_n : x_{1,i+1}^{e_1} \cdots x_{k-1,i+1}^{e_{k-1}} x_{k,i+1}^{e_k}, x_{1,i+1},\ldots,x_{k-1,i+1}\rangle
 \\
& \hspace{2em}
= \langle
I_n : x_{1,i+1}^{e_1} \cdots x_{k-1,i+1}^{e_{k-1}}, x_{1,i+1},\ldots,x_{k-1,i+1}
\rangle : x_{k,i+1}^{e_k}.
\end{align*}
In particular, for $e_k=d$, this guarantees that the assumption
on the ideal quotients in Corollary \ref{cor: one table, bigraded} is satisfied. Consider  filtrations
$\langle
\Ic: x_{1,i+1}^{e_1} \cdots x_{k,i+1}^{e_{k}}, x_{1,i+1},\ldots,x_{k,i+1}
\rangle$
with $1 \le k  \le c$ by defining its $n$-th ideal as
\[
\langle
I_n: x_{1,i+1}^{e_1} \cdots x_{k,i+1}^{e_{k}}, x_{1,i+1},\ldots,x_{k,i+1}
\rangle \text{ if } n \ge r \text{  and as  } \langle 0\rangle \text{  if } 1 \le n < r.
\]
Now we prove by induction on $k$, where $1 \le k \le c$,   that
\begin{align*}
\hspace{2em}&\hspace{-2em} H_{\Ic} (s, t)  = \\
& \frac{1}{(1-t)^{k}}
\cdot
\sum_{\be = (e_1,\ldots,e_k)\in \Z^k \atop 0 \le e_l \le d}
f_{\be,k} (t)
\cdot
H_{\langle \Ic : x_{1,i+1}^{e_1} \cdots x_{k,i+1}^{e_k}, x_{1,i+1},\ldots,x_{k,i+1}\rangle} (s, t)
+
\frac{g_k(s, t)}{(1-t)^{(r-1) c+k}},
\end{align*}
where
\[
f_{\be,k}  (t)  = t^{|\be|_k} (1-t)^{\delta_k (\be)} \; \text{ with} \ |\be|_k = e_1 + \cdots + e_k,  \   \delta_k (\be) = \# \{e_l  \; | \;  e_l = d \text{ and } 1 \le l \le k\}\]
and $g_k(s, t)\in \Z[s, t]$ with $g_k (s, 1) = - d^{k-1} s^{r-1}$.

Let $k=1$. We observed above that if $n \ge r$ then
\[
I_n : x_{1,i+1}^{d}
=
 I_n : x_{1,i+1}^{d+1}.
\]
Corollary \ref{cor: one table, bigraded}
implies that there exists $g_1(s, t)\in \Z[s,t]$ with $g_1(s, 1) = - s^{r-1}$ such that
\begin{align*}
H_{\Ic} (s, t)  & = \sum_{e = 0}^{d-1} H_{\langle \Ic : x_{1,i+1}^e, x_{1,i+1}\rangle} (s, t) \cdot t^e + H_{\langle \Ic : x_{1,i+1}^d, x_{1,i+1}\rangle} (s, t) \cdot \frac{t^d}{1-t} + \frac{g_1(s, t)}{(1-t)^{(r-1) c+1}}
\\
& =  \frac{1}{(1-t)}
\cdot
\sum_{e = 0}^{d} f_{e,1} (t)
\cdot
H_{\langle \Ic : x_{1,i+1}^e, x_{1,i+1}\rangle} (s, t) + \frac{g_1(s, t)}{(1-t)^{(r-1) c+1}},
\end{align*}
where
$f_{e,1} (t)=t^e(1-t)
\text{ if } 0\le e \le d-1
\text{ and }
f_{d,1} (t)=t^d$, as claimed.

Next let $k>1$.  We observed at the beginning of the proof that
\begin{align*}
\hspace{2em}&\hspace{-2em} \langle
I_n : x_{1,i+1}^{e_1} \cdots x_{k-1,i+1}^{e_{k-1}}, x_{1,i+1},\ldots,x_{k-1,i+1}
\rangle : x_{k,i+1}^{d}  \\
& \hspace{2em} = \langle
I_n : x_{1,i+1}^{e_1} \cdots x_{k-1,i+1}^{e_{k-1}}, x_{1,i+1},\ldots,x_{k-1,i+1}
\rangle : x_{k,i+1}^{d+1}.
\end{align*}
Corollary \ref{cor: one table, bigraded} applied to $\langle \Ic : x_{1,i+1}^{e_1} \cdots x_{k-1,i+1}^{e_{k-1}}, x_{1,i+1},\ldots,x_{k-1,i+1}\rangle$
implies that there exists $\Tilde{g}_k(s, t)\in \Z[s,t]$ with $\Tilde{g}_{\be, k-1} (s, 1) = - s^{r-1}$ such that
\begin{eqnarray*}
&&H_{\langle \Ic : x_{1,i+1}^{e_1} \cdots x_{k-1,i+1}^{e_{k-1}}, x_{1,i+1},\ldots,x_{k-1,i+1}\rangle} (s, t)
\\
&=&
\sum_{e_k = 0}^{d-1}
H_{\langle \Ic : x_{1,i+1}^{e_1} \cdots x_{k,i+1}^{e_k}
, x_{1,i+1},\ldots, x_{k,i+1}\rangle} (s, t) \cdot t^{e_k}
+
H_{\langle \Ic : x_{1,i+1}^{e_1} \cdots x_{k,i+1}^{d}
, x_{1,i+1},\ldots, x_{k,i+1}\rangle} (s, t) \cdot \frac{t^d}{1-t}
\\
&&+ \frac{\Tilde{g}_{\be, k-1} (s, t)}{(1-t)^{(r-1) c+1}}
\\
&=&
\frac{1}{(1-t)}
\cdot
\sum_{e_k = 0}^{d}
\Tilde{f}_{e_k} (t)
\cdot
H_{\langle \Ic : x_{1,i+1}^{e_1} \cdots x_{k,i+1}^{e_k}
, x_{1,i+1},\ldots,x_{k,i+1}\rangle} (s, t)
+
\frac{\Tilde{g}_{\be, k-1} (s, t)}{(1-t)^{(r-1) c+1}},
\end{eqnarray*}
where
 $\Tilde{f}_{e_k} (t)=t^{e_k}(1-t)
\text{ for } 0\le e_k \le d-1
\text{ and }
\Tilde{f}_{d} (t)=t^d$.

The induction hypothesis  yields
\begin{eqnarray*}
H_{\Ic} (s, t)
&=&
\frac{1}{(1-t)^{k-1}}
\cdot
\sum_{\be = (e_1,\ldots,e_{k-1})\in \Z^{k-1} \atop 0 \le e_l \le d}
f_{\be,k-1} (t)
\cdot
H_{\langle \Ic : x_{1,i+1}^{e_1} \cdots x_{k-1,i+1}^{e_{k-1}}, x_{1,i+1},\ldots,x_{k-1,i+1}\rangle} (s, t)
\\
&&+
\frac{g_{k-1}(s, t)}{(1-t)^{(r-1) c+k-1}},
\end{eqnarray*}
where
\[
f_{\be,k-1}  (t)  = t^{|\be |_{k-1}} (1-t)^{\delta_{k-1} (\be)}
\]
and $g_{k-1}(s, t)\in \Z[s, t]$ with $g_{k-1} (s, 1) = - d^{k-2} s^{r-1}$.

 Substituting the formula for  $H_{\langle \Ic : x_{1,i+1}^{e_1} \cdots x_{k-1,i+1}^{e_{k-1}}, x_{1,i+1},\ldots,x_{k-1,i+1}\rangle} (s, t)$ into the last equation and simplifying gives
\begin{align*}
H_{\Ic} (s, t)
=&
\frac{1}{(1-t)^{k}}
\cdot
\sum_{\be = (e_1,\ldots,e_{k-1})\in \Z^{k-1} \atop 0 \le e_l \le d}
\bigl(
\sum_{e_k = 0}^{d}
f_{\be,k-1} (t)\Tilde{f}_{e_k} (t)
\cdot
H_{\langle \Ic : x_{1,i+1}^{e_1} \cdots x_{k,i+1}^{e_k}
, x_{1,i+1},\ldots x_{k,i+1}\rangle} (s, t)
\bigr)
\\
&
\ +
\frac{1}{(1-t)^{(r-1) c + k}}
\cdot
\sum_{\be = (e_1,\ldots,e_{k-1})\in \Z^{k-1} \atop 0 \le e_l \le d}
f_{\be,k-1} (t)   \Tilde{g}_{\be, k-1} (s, t)
+
\frac{g_{k-1}(s, t)}{(1-t)^{(r-1) c+k-1}},
\\
\\
=&
\frac{1}{(1-t)^{k}}
\cdot
\sum_{\be = (e_1,\ldots,e_{k})\in \Z^{k} \atop 0 \le e_l \le d}
f_{\be,k} (t)
\cdot
H_{\langle \Ic : x_{1,i+1}^{e_1} \cdots x_{k,i+1}^{e_k}
, x_{1,i+1},\ldots,x_{k,i+1}\rangle} (s, t)
\bigr)
+
\frac{g_{k}(s, t)}{(1-t)^{(r-1) c+k}},
\end{align*}
where
\begin{align*}
f_{(e_1,\dots,e_k),k} (t) & =  f_{(e_1,\dots,e_{k-1}),k-1} (t)\cdot\Tilde{f}_{e_k} (t)
 =   t^{|\be|_k} (1-t)^{\delta_k (\be)}
\end{align*}
and
\[
g_{k}(s, t)
=
 \sum_{\be = (e_1,\ldots,e_{k-1})\in \Z^{k-1} \atop 0 \le e_l \le d}
f_{\be,k-1} (t)  \Tilde{g}_{\be, k-1} (s, t)
 +
g_{k-1}(s, t)\cdot (1-t) \in \Z[s,t].
\]
Observe that
\begin{align*}
g_k(s, 1) & = \sum_{\be = (e_1,\ldots,e_{k-1})\in \Z^{k-1} \atop 0 \le e_l \le d}
f_{\be,k-1} (1)  \Tilde{g}_{\be, k-1} (s, 1)  \\
& = \sum_{\be = (e_1,\ldots,e_{k-1})\in \Z^{k-1} \atop 0 \le e_l \le d, \ \delta_{k-1} (\be)= 0}
 (-  s^{r-1}) \\
& = - d^{k-1}s^{r-1}.
\end{align*}
The case $k=c$ of the induction establishes the Lemma.
\end{proof}

The following result uses the elements $\sigma_i$ defined at the beginning of Section \ref{secLdecomp}.

\begin{lem}
    \label{lem:shifted filtrations}
Consider, for some $i \ge 0$,  an $\Inc^i$-invariant filtration $\Ic = (I_n)_{n \in \N}$ of monomial ideals.
Assume  $\ind^i (\Ic) \ge i+1$. Let  $r \ge \ind^i (\Ic)$ be an integer,
and set
\[
J_{r+1} = \la \sigma_i (I_r), \ x_{1,i+1},\ldots,x_{c,i+1}\ra.
\]
Define a sequence of ideals $\Jc = (J_n)_{n \in \N}$ by
\[
J_n = \begin{cases}
0 & \text{if } 1 \le n \le r, \\
\la \Inc_{r+1, n}^{i+1} (J_{r+1}) \ra & \text{if } n \ge r+1.
\end{cases}
\]
Then $\Jc$ is an $\Inc^{i+1}$-invariant filtration, and there exists a polynomial  $g (s, t)\in \Z[s, t]$ with $g(s,1) = s^r$ such that
\[
H_{\mathcal J } (s, t) =  s \cdot H_{\Ic} (s, t) + \frac{g(s, t)}{(1-t)^{rc}}.
\]
\end{lem}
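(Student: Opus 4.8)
The plan is to show that, up to shifting the index by one, $\Jc$ is just $\Ic$: precisely, that $K[X_n]/J_n \cong K[X_{n-1}]/I_{n-1}$ as graded $K$-algebras for every $n \ge r+1$. Note first that $r \ge \ind^i(\Ic) \ge i+1$, so column $i+1$ is genuinely one of the columns that $\sigma_i$ ``frees up'', and $J_{r+1}$ indeed lies in $K[X_{r+1}]$. That $\Jc$ is an $\Inc^{i+1}$-invariant filtration is then immediate: it is exactly the filtration of Lemma \ref{lem:equivfilt} applied to the nonzero ideal $J_{r+1}\in K[X_{r+1}]$ (it contains $x_{1,i+1}$), with monoid $\Inc^{i+1}$ and starting index $r+1$.

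The core is the identity
\[
J_n = \la \sigma_i(I_{n-1}),\ x_{1,i+1},\ldots,x_{c,i+1}\ra_{K[X_n]}\qquad\text{for all } n \ge r+1.
\]
To prove it I would first set up a $\sigma_i$-conjugation dictionary between the relevant index sets. Given $\pi'\in\Inc^i_{r,n-1}$, Lemma \ref{lem:element comp} produces $\tau\in\Inc^{i+1}$ with $\sigma_i\circ\pi'=\tau\circ\sigma_i$, and its ``moreover'' clause turns $\pi'(r)\le n-1$ into $\tau(r+1)\le n$, i.e.\ $\tau\in\Inc^{i+1}_{r+1,n}$. Conversely, for $\tau\in\Inc^{i+1}_{r+1,n}$ the function $\pi'$ defined by $\pi'(j)=j$ for $j\le i$ and $\pi'(j)=\tau(j+1)-1$ for $j>i$ is checked directly to be strictly increasing, to lie in $\Inc^i_{r,n-1}$, and to satisfy $\tau\circ\sigma_i=\sigma_i\circ\pi'$. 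Because every element of $\Inc^{i+1}$ fixes $x_{1,i+1},\ldots,x_{c,i+1}$, a matched pair $\tau,\pi'$ gives $\la\tau(J_{r+1})\ra=\la\sigma_i(\pi'(I_r)),\ x_{1,i+1},\ldots,x_{c,i+1}\ra$. Generating an ideal from all of these as $\tau$ ranges over $\Inc^{i+1}_{r+1,n}$ — invoking $\pi'(I_r)\subseteq I_{n-1}$ for ``$\subseteq$'' and the dictionary together with the stability identity $I_{n-1}=\la\Inc^i_{r,n-1}(I_r)\ra$ (valid since $r\ge\ind^i(\Ic)$, by Lemma \ref{lem:stab}) for ``$\supseteq$'' — yields the displayed formula. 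Finally $\sigma_i$ is a degree-preserving isomorphism of $K[X_{n-1}]$ onto the subring $K[X_n\setminus\{x_{1,i+1},\ldots,x_{c,i+1}\}]$ of $K[X_n]$, and reducing $K[X_n]$ modulo $x_{1,i+1},\ldots,x_{c,i+1}$ realizes exactly that subring; hence the displayed identity descends to a graded isomorphism $K[X_{n-1}]/I_{n-1}\cong K[X_n]/J_n$, and in particular $H_{K[X_n]/J_n}(t)=H_{K[X_{n-1}]/I_{n-1}}(t)$.

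It remains to assemble the series. Using $J_n=0$ for $n\le r$ (so $H_{K[X_n]/J_n}(t)=(1-t)^{-cn}$) and the Hilbert series equality for $n\ge r+1$,
\[
H_{\Jc}(s,t)=\sum_{n=0}^{r}\frac{s^n}{(1-t)^{cn}}+s\sum_{m\ge r}H_{K[X_m]/I_m}(t)\,s^m = s\,H_{\Ic}(s,t)+\sum_{n=0}^{r}\frac{s^n}{(1-t)^{cn}}-s\sum_{m=0}^{r-1}H_{K[X_m]/I_m}(t)\,s^m.
\]
Writing $H_{K[X_m]/I_m}(t)=h_m(t)(1-t)^{-\dim K[X_m]/I_m}$ with $h_m\in\Z[t]$ and using $\dim K[X_m]/I_m\le cm\le c(r-1)<cr$ for $m\le r-1$, the last two sums combine over the common denominator $(1-t)^{rc}$ into $g(s,t)/(1-t)^{rc}$ with $g\in\Z[s,t]$; evaluating the resulting numerator at $t=1$ kills every term except the one coming from $n=r$, which contributes $s^r$, so $g(s,1)=s^r$. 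The one genuinely nontrivial point is the middle step — the identity $J_n=\la\sigma_i(I_{n-1}),\ x_{1,i+1},\ldots,x_{c,i+1}\ra$ — which hinges on the $\sigma_i$-conjugation dictionary between $\Inc^{i+1}_{r+1,n}$ and $\Inc^i_{r,n-1}$ and on the hypothesis $r\ge\ind^i(\Ic)$; the rest is bookkeeping.
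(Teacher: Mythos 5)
Your proposal is correct and follows the same overall architecture as the paper's proof: invoke Lemma \ref{lem:equivfilt} to see that $\Jc$ is an $\Inc^{i+1}$-invariant filtration, establish the key identity $J_n = \la \sigma_i(I_{n-1}),\, x_{1,i+1},\ldots,x_{c,i+1}\ra$ for $n \ge r+1$, pass to the graded isomorphism $K[X_{n-1}]/I_{n-1} \cong K[X_n]/J_n$, and do the bookkeeping with the low-degree terms (your computation of $g(s,1)=s^r$ via $\dim K[X_m]/I_m \le c(r-1) < cr$ for $m\le r-1$ matches the paper's). The one place you diverge is in proving the key identity: the paper argues by induction on $n$, using Lemma \ref{lem:element comp} for the inclusion $\la\sigma_i(I_n), x's\ra \subseteq J_{n+1}$ and, for the reverse inclusion, the stability $J_{n+1} = \la\Inc^{i+1}_{n,n+1}(J_n)\ra$ together with the identification of $\Inc^{i+1}_{n,n+1}$-orbits with the shifts $\sigma_{i+1},\ldots,\sigma_{n+1}$ and the commutation rule $\sigma_j\circ\sigma_i = \sigma_i\circ\sigma_{j-1}$ of Corollary \ref{cor:sigma comp}. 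You instead avoid induction entirely by setting up a two-way conjugation dictionary between $\Inc^{i+1}_{r+1,n}$ and $\Inc^i_{r,n-1}$ — Lemma \ref{lem:element comp} gives one direction, and your explicit $\pi'(j)=\tau(j+1)-1$ (for $j>i$) gives the other; I checked that this $\pi'$ is strictly increasing, lies in $\Inc^i_{r,n-1}$ (this uses $r>i$, which follows from $r\ge\ind^i(\Ic)\ge i+1$), and satisfies $\tau\circ\sigma_i=\sigma_i\circ\pi'$. Combined with the stability identity $I_{n-1}=\la\Inc^i_{r,n-1}(I_r)\ra$, this is a clean, slightly more direct substitute for the paper's inductive step; the paper's version has the mild advantage of only ever composing with one-step shifts, but both arguments are sound.
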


\begin{proof}
By Lemma \ref{lem:equivfilt}
we know that $\Jc$ is an $\Inc^{i+1}$-invariant filtration
with stability index
$$
\ind^{i+1} (\mathcal J)=r+1.
$$
At first we prove by induction on $n \geq r$ that
\[
J_{n+1}=
\la \sigma_i (I_{n}), \ x_{1,i+1},\ldots,x_{c,i+1}\ra.
\]
If  $n=r$ , then this is true by one of our assumptions.

Let $n>r$. We have
\begin{eqnarray*}
\la \sigma_i (I_{n}), \ x_{1,i+1},\ldots,x_{c,i+1}\ra
&=&
\la
\sigma_i \bigl(\la \Inc^{i}_{r,n} (I_{r} ) \ra\bigr), \ x_{1,i+1},\ldots,x_{c,i+1}\ra
\\
&\subseteq&
\la
\Inc^{i+1}_{r+1,n+1} \bigl(\sigma_i  (I_{r}) \bigr), \ x_{1,i+1},\ldots,x_{c,i+1}\ra
\\
&=&
\la \Inc_{r+1, n+1}^{i+1} (J_{r+1}) \ra
\\
&=&
J_{n+1},
\end{eqnarray*}
where the containment is a consequence of Lemma \ref{lem:element comp}.
It remains to show the reverse inclusion. Since $n\geq r+1=\ind^i (\mathcal J)$
the induction hypothesis implies
\begin{eqnarray*}
J_{n+1}
&=&
\la \Inc^{i+1}_{n,n+1} (J_n) \ra
\\
&=&
\la
\Inc^{i+1}_{n,n+1}
(  \la \sigma_i (I_{n-1}), \ x_{1,i+1},\ldots,x_{c,i+1}\ra)
\ra
\\
&=&
\la
\Inc^{i+1}_{n,n+1} \bigl(\sigma_i (I_{n-1})\bigr)
, \ x_{1,i+1},\ldots,x_{c,i+1}
\ra.
\end{eqnarray*}
Note that $\la
\Inc^{i+1}_{n,n+1}  \bigl(\sigma_i (I_{n-1}) \bigr) \ra = \la \{\sigma_{i+1},\ldots,\sigma_{n+1} \}  \bigl(\sigma_i (I_{n-1}) \bigr) \ra$ because for any $f \in K[X_n]$ and any $\pi \in \Inc^i_{n, n+1}$, there is some $\sigma_j$ with $i < j \le n+1$ such that $\pi (f) = \sigma_j (f)$.
Using Corollary \ref{cor:sigma comp}, it follows that
\begin{eqnarray*}
J_{n+1}
&\subseteq&
\la
\sigma_i \bigl(\Inc^{i}_{n-1,n}\ (I_{n-1}) \bigr)
, \ x_{1,i+1},\ldots,x_{c,i+1}
\ra
\\
&\subseteq&
\la
\sigma_i (I_n)
, \ x_{1,i+1},\ldots,x_{c,i+1}\ra.
\end{eqnarray*}
Hence
\[
J_{n+1}=
\la
\sigma_i (I_n)
, \ x_{1,i+1},\ldots,x_{c,i+1}\ra,
\]
concluding the proof by induction.

Next we observe that $\sigma_i (I_n)$
is obtained from $I_n$ replacing  $x_{k,l}$ by $x_{k,l+1}$ for $1\leq k\leq c$ and $i+1\leq l$. In particular, $\sigma_i (I_n)$ has no minimal monomial generator which is divisible by any $x_{k,i+1}$ for $1\leq k\leq c$.
Hence  for $n \ge r$, the map
\[
K[X_n]/I_n \to K[X_{n+1}]/\la \sigma_i (I_n), \ x_{1,i+1},\ldots,x_{c,i+1}\ra, \;
\overline{x_{k,l}} \mapsto
\begin{cases}
\overline{x_{k,l}}& \text{if } l\le i,\\
\overline{x_{k,l+1}}& \text{if } i+1\leq l\leq n,
\end{cases}
\]
is well-defined and an isomorphism of graded $K$-algebras.
This implies
\[
H_{K[X_n]/I_n}(t)=H_{K[X_{n+1}]/J_{n+1}}(t)
\text{ for } n\geq r.
\]
Thus
\begin{eqnarray*}
H_{\mathcal J } (s, t)
&=&
s \cdot H_{\Ic} (s, t) + \sum_{n=0}^{r} H_{K[X_{n}]}(t)s^n - s \sum_{n=0}^{r-1} H_{K[X_n]/I_n} (t) s^n
\\
&=&
s \cdot H_{\Ic} (s, t) + \frac{g(s, t)}{(1-t)^{rc}},
\end{eqnarray*}
where $g(s, t) \in \Z[s,t]$ and $g(s,1) = s^r$.
Here we used the fact that  $H_{K[X_n]} (t) = \frac{1}{(1-t)^{c n}}$ and   that the Krull dimension of
$K[X_n]/I_n$ is at most $(r-1)c$ if $0\le n \le r-1$.
This concludes the proof.
\end{proof}

We are ready to establish the main result of this section.

\begin{proof}[Proof of Theorem \ref{thm:rat Hilb series, rev}]
Our argument uses a double induction.    We show by a first  induction on $p \ge 0$ that, for any integer $i \ge 0$ and  any $\Inc^i$-invariant filtration $\Ic = (I_n)_{n \in \N}$, satisfying $\ind^i (\Ic) - i \le p$, the bigraded rational Hilbert series $H_{\Ic} (s, t)$ is rational, as desired.

Assume $p = 0$, that is, $\ind^{i} (\Ic) - i \le 0$.  Then we conclude by Lemma \ref{lem: p is zero}.

Let $p \ge 1$.  Now we use a second induction on $q \ge 0$ to show: if, for some integer $i \ge 0$,  an $\Inc^i$-invariant filtration $\Ic = (I_n)_{n \in \N}$  satisfies $r - i = \ind^i (\Ic) - i \le p$ and
\[
 \sum_{j = 0}^{e^+ (I_r)}  \dim_K [K[X_r]/I_r]_j \le q,
\]
then the  bigraded  Hilbert series $H_{\Ic} (s, t)$ is rational with  properties as claimed in the theorem. We refer to the above left-hand side as the \emph{$q$-invariant} of $\Ic$, that is,
\[
q (\Ic) = \sum_{j = 0}^{e^+ (I_r)}  \dim_K [K[X_r]/I_r]_j.
\]

By the first induction, we may assume $\ind^i (\Ic) - i = p$.  To begin the second induction, assume  $q = 0$. Then $I_r = K[X_r]$, and thus $K[X_n]/I_n = 0$ for each $n \ge r$. Hence
\[
H_{\Ic} (s, t) = \sum_{n = 0}^{r-1} H_{K[X_n]/I_n} (t) \cdot s^n = \sum_{n = 0}^{r-1} \frac{g_n(t)}{(1-t)^{d_n}} s^n,
\]
where $d_n = \dim K[X_n]/I_n$, and each $g_n (1) > 0$. Thus, using the notation of the theorem, $H_{\Ic} (s, t)$
is of the desired form, where we choose  the denominator as $(1-t)^a$ with
\[
a = \max \{d_n \; | \; 0 \le n < r\}, \; b= 0,
\]
and the coefficients of $0 \neq g(s, 1) =  \sum_{d_n = a} g_n (1)  s^{n}$ are non-negative.

Let $q \ge 1$, and assume that $q (\Ic) = q$. According to Corollary \ref{cor:index change when increasing i}, the sequence $\Ic$ is an $\Inc^{i+1}$-invariant filtration with $\ind^{i+1} (\Ic) \le r+1$. If $\ind^{i+1} (\Ic) \le r$, then $H_{\Ic} (s, t)$ has the desired form by induction on $p$.

Assume $\ind^{i+1} (\Ic) = r+1$.  We want to apply Lemma \ref{lem:multiple tables}.
For non-negative integers $e_1,\ldots,e_c$, consider a  filtration
\[
(\Ic : x_{1,i+1}^{e_1} \cdots x_{c,i+1}^{e_c},\ x_{1,i+1},\ldots,x_{c,i+1})_{n \in \N}
\]
whose $n$-th ideal is $\langle I_n  : x_{1,i+1}^{e_1} \cdots x_{c,i+1}^{e_c}, x_{1,i+1},\ldots,x_{c,i+1}\rangle $ if $n \ge r+1$ and $\langle 0\rangle$ if $1 \le n \le r$, as considered in Lemma \ref{lem:multiple tables}, where the role of  $r$ there is now taken by the number $r+1$.
By Lemma \ref{lem:multiple tables},  $(\Ic : x_{1,i+1}^{e_1} \cdots x_{c,i+1}^{e_c},\ x_{1,i+1},\ldots,x_{c,i+1})$ is $\Inc^{i+1}$-invariant with $(i+1)$-stability index $r+1$.
Assume that each exponent $e_k$ satisfies $0 \le e_k \le d$, where
\[
d =  \max\{ e \in \N_0 \; | \; x_{k, i+1}^e \text{ divides some monomial in } G(I_r) \text{ for some } k \in [c]\} .
\]
Since
\[
\sigma_{i} (I_r) \subset I_{r+1} \subset  I_{r+1} : x_{1,i+1}^{e_1} \cdots x_{c,i+1}^{e_c}
\]
 and, using $r +1 = p+ i+1 > i+1$,
\[
K[X_{r+1}]/\la \sigma_{i} (I_r), \ x_{1,i+1},\ldots,x_{c,i+1} \ra \cong K[X_r]/I_r
\]
we obtain, for each integer $j$,
\[
\dim_K [K[X_{r+1}]/\la I_{r+1} : x_{1,i+1}^{e_1} \cdots x_{c,i+1}^{e_c},\ x_{1,i+1},\ldots,x_{c,i+1} \ra]_j \le \dim_K [K[X_r]/I_r]_j.
\]
By $\Inc^i$-stability, we have $e^+(I_{r+1}) = e^+ (I_r)$, and thus
\[
e^* = e^+ (\la I_{r+1} : x_{1,i+1}^{e_1} \cdots x_{c,i+1}^{e_c},\ x_{1,i+1},\ldots,x_{c,i+1} \ra) \le e^+ (I_r).
\]
Hence, we get
\begin{equation}
    \label{eq:q estimate}
\sum_{j = 0}^{e^*} \dim_K [K[X_{r+1}]/\la I_{r+1} : x_{1,i+1}^{e_1} \cdots x_{c,i+1}^{e_c},\ x_{1,i+1},\ldots,x_{c,i+1}\ra]_j    \le  \sum_{j = 0}^{e^+ (I_r)}  \dim_K [K[X_r]/I_r]_j = q.
\end{equation}
Notice that the left-hand side is $q (\la \Ic : x_{1,i+1}^{e_1} \cdots x_{c,i+1}^{e_c},\ x_{1,i+1},\ldots,x_{c,i+1}  \ra)$.

If the inequality is strict, then we conclude by induction on $q$ that the $\Inc^{i+1}$-filtration $\la \Ic : x_{1,i+1}^{e_1} \cdots x_{c,i+1}^{e_c},\ x_{1,i+1},\ldots,x_{c,i+1}\ra$ has a  rational Hilbert series, as desired. Otherwise, that is, if we have equality in \eqref{eq:q estimate},  it follows that
\begin{equation*}
\la I_{r+1} : x_{1,i+1}^{e_1} \cdots x_{c,i+1}^{e_c},\ x_{1,i+1},\ldots,x_{c,i+1}\ra = \la \sigma_i (I_r), \ x_{1,i},\ldots,x_{c,i} \ra.
\end{equation*}
    Now applying  Lemma \ref{lem:shifted filtrations} to the right-hand side  gives
\begin{equation}
   \label{eq:shift}
H_{(\Ic : x_{1,i+1}^{e_1} \cdots x_{c,i+1}^{e_c},\ x_{1,i+1},\ldots,x_{c,i+1})} (s, t) =  s \cdot H_{\Ic} (s, t) + \frac{g_{\be} (s, t)}{(1-t)^{r c}},
\end{equation}
where $\be = (e_1,\ldots,e_c)$, \  $g_{\be} (s, t)\in \Z[s, t]$,  and $g_{\be} (s, 1) = s^r$. To simplify notation put
\[
\la \Ic_{\be} \ra = \la \Ic : x_{1,i+1}^{e_1} \cdots x_{c,i+1}^{e_c},\ x_{1,i+1},\ldots,x_{c,i+1} \ra.
\]
We now apply Lemma \ref{lem:multiple tables}.
Using Equation \eqref{eq:shift} for all filtrations with $q$-invariant equal to $q$, we obtain
\begin{equation}
    \label{eq:recursion}
\begin{split}
 H_{\Ic} (s, t) = & \frac{h(s, t)}{(1-t)^{ (r+1) c}} +  \frac{1}{(1-t)^c} \cdot
\sum_{\be = (e_1,\ldots,e_c)\in \Z^c,\ 0 \le e_l \le d \atop   q (\la \Ic_{\be} \ra ) < q }
f_{\be} (t) \cdot H_{\langle \Ic_{\be} \rangle} (s, t) \\[1ex]
& + \frac{1}{(1-t)^c} \cdot \sum_{\be = (e_1,\ldots,e_c)\in \Z^c,\ 0 \le e_l \le d \atop    q (\la \Ic_{\be} \ra ) = q}
f_{\be} (t) \cdot \left [ s \cdot H_{\Ic} (s, t) + \frac{g_{\be} (s, t)}{(1-t)^{r c}}   \right ],
\end{split}
\end{equation}
where $h(s, 1) = - d^{c-1} s^{r}$ and, using the notation of Lemma \ref{lem:multiple tables},
\[
f_{\be} (t)  = t^{|\be|} (1-t)^{\delta (\be)}.
\]
Collecting terms, we get
\begin{equation}
     \label{eq:one side}
H_{\Ic} (s, t) \cdot \left [1 - \frac{s}{(1-t)^c} \Tilde{f} (t) \right ] =  \frac{\Tilde{g}(s, t)}{(1-t)^{(r+1)c}} + \frac{1}{(1-t)^c} \cdot
\sum_{\be = (e_1,\ldots,e_c)\in \Z^c,\ 0 \le e_l \le d \atop    q (\la \Ic_{\be} \ra ) < q }
f_{\be} (t) \cdot H_{\langle \Ic_{\be} \rangle} (s, t),
\end{equation}
where
\[
\Tilde{f}(t) = \sum_{\be = (e_1,\ldots,e_c)\in \Z^c,\ 0 \le e_l \le d \atop    q (\la \Ic_{\be} \ra ) = q} t^{|\be|} (1-t)^{\delta (\be)} = (1-t)^{\tilde{c}} f (t)
\]
with $0 \le \tilde{c} \le c$, \ $f(t) \in \Z[t]$, \ $f(1) \ge 0$. Moreover,  $f(1)$ is positive, unless there is no multi-index $\be$ such that $q (\la \Ic_{\be} \ra ) = q$, in which case $\Tilde{f} = 0$. Furthermore,
\[
\Tilde{g} (s, t) = h(s, t) + \sum_{\be = (e_1,\ldots,e_c)\in \Z^c,\ 0 \le e_l \le d \atop    q (\la \Ic_{\be} \ra ) = q} f_{\be} (t) g_{\be} (s, t) \in \Z[s, t]
\]
and
\[
\Tilde{g} (s,1) = -d^{c-1} s^r + \sum_{\be = (e_1,\ldots,e_c)\in \Z^c,\ 0 \le e_l \le d \atop    q (\la \Ic_{\be} \ra, \delta (\be) = 0 } s^r.
\]
 Since each bigraded  Hilbert series appearing on the right-hand side of Equation \eqref{eq:one side} is rational of the desired form by induction, we conclude that $H_{\Ic} (s, t)$ is rational and that it  has the claimed shape.  Here we use in particular that the degree of the numerator polynomial of any $H_{\langle \Ic_{\be} \rangle} (s, t)$, after  evaluation at $t = 1$,  is at most $r+1 + q-1= r+q$.

 It remains to establish the estimates on $a$ and $b$.
Consider $b$.
If $q = 0$, then we have seen above that we  can always achieve $b = 0$.
By induction, each $H_{\Ic_{\be}} (s, t)$   on the right-hand side of Equation \eqref{eq:one side} has at most $\frac{(d+1)^{(q-1) c} - 1}{(d+1)^c - 1}$ factors other than $(1-t)$ in the denominator. There are at most $(d+1)^c$ such Hilbert series.  Hence, the number of factors other than $(1-t)$ in the denominator of $H_{\Ic} (s, t)$ is at most $(d+1)^c \cdot \frac{(d+1)^{(q-1) c} - 1}{(d+1)^c - 1} + 1 = \frac{(d+1)^{q c} - 1}{(d+1)^c - 1}$, as claimed.

Finally, we estimate $a$. If $p \le 0$ or $q = 0$, then we have shown above that $a \le (r-1) c$. By induction on $q$, the right-hand side of Equation \eqref{eq:one side}  can be written as a rational function whose  exponent of $(1- t)$ in the denominator is at most $[r + 2(q-1)) + 1] c$. Solving for $H_{\Ic}$ in this equation, we see that  $a \le (r-1+2q) c$, which completes the argument.
\end{proof}


\section{Consequences for graded ideals and algebras}
\label{sec:consequences}

Combining our results, we establish the rationality of bigraded Hilbert series of any $\Inc^i$-invariant filtration. Then we derive asymptotic properties of invariants of the ideals in such a chain. We also state the consequences for ideals that are invariant under the action of a symmetric group. Several  examples illustrate our results.
Throughout this section,  when considering initial ideals,   we use the  lexicographic order $\preceq$ on $K[X]$ or $K[X_n]$ induced by the ordering of the variables $x_{i, j} \preceq x_{i', j'}$ if either $i < i'$ or  $i = i'$ and $j < j'$.

\begin{lem}
   \label{lem:pass initial}
For any $i \in \N_0$, let $\Ic = (I_n)_{n \in \N}$ be an $\Inc^i$-invariant filtration. Then the sequence $\ini_{\preceq} (\Ic) = (\ini_{\preceq} (I_n))_{n \in \N}$ also is an $\Inc^i$-invariant filtration and
\[
 \ind^i (\Ic) \le \ind^i (\ini_{\preceq} (\Ic)).
\]
\end{lem}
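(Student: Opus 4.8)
The plan is to verify the two assertions of Lemma~\ref{lem:pass initial} separately, relying on the compatibility of the standard $\Inc^i$-action with the fixed lexicographic order $\preceq$. Recall that, by the remark preceding Theorem~\ref{thm:i-finiteness}, the lex order satisfies $\pi(u) \preceq \pi(v)$ whenever $\pi \in \Inc^i$ and $u \preceq v$ are monomials; consequently $\ini_{\preceq}(\pi(g)) = \pi(\ini_{\preceq}(g))$ for every $g \in K[X]$ and every $\pi \in \Inc^i$. This is the single tool both parts rest on.

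First I would show that $\ini_{\preceq}(\Ic)$ is an $\Inc^i$-invariant filtration, i.e. that $\Inc^i_{m,n}(\ini_{\preceq}(I_m)) \subseteq \ini_{\preceq}(I_n)$ for $m \le n$. Here I must be slightly careful: $\ini_{\preceq}(I_m)$ is an ideal of $K[X_m]$, and its initial terms are computed with respect to the restriction of $\preceq$ to $K[X_m]$, but since $\preceq$ is the global lex order on $K[X]$ and $K[X_m] \subseteq K[X]$ is a retract, the initial monomial of a polynomial in $K[X_m]$ is the same whether computed in $K[X_m]$ or in $K[X]$. So take a generator $\ini_{\preceq}(f)$ of $\ini_{\preceq}(I_m)$ with $f \in I_m$, and $\pi \in \Inc^i_{m,n}$. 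Then $\pi(f) \in I_n$ because $\Ic$ is $\Inc^i$-invariant, and $\pi(\ini_{\preceq}(f)) = \ini_{\preceq}(\pi(f)) \in \ini_{\preceq}(I_n)$. Since the monomials $\ini_{\preceq}(f)$, $f \in I_m$, generate $\ini_{\preceq}(I_m)$ as an ideal of $K[X_m]$, and $\pi$ acts as a $K$-algebra homomorphism, it follows that $\pi(\ini_{\preceq}(I_m)) \subseteq \ini_{\preceq}(I_n)$ as subsets of $K[X]$, which is exactly the filtration condition.

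Next I would prove the index inequality. Set $r = \ind^i(\ini_{\preceq}(\Ic))$; I must show $\la \Inc^i_{m,n}(I_m)\ra_{K[X_n]} = I_n$ for all $n \ge m \ge r$, since then $\ind^i(\Ic) \le r$. Fix such $m \le n$; the inclusion $\la \Inc^i_{m,n}(I_m)\ra \subseteq I_n$ is just the filtration property, so only the reverse inclusion is at issue. The natural approach is a degree-by-degree argument on the graded pieces, using that $\dim_K [K[X_n]/J]_j = \dim_K [K[X_n]/\ini_{\preceq}(J)]_j$ for any graded ideal $J$ of $K[X_n]$. Applying this to $J = I_n$ and to $J = \la \Inc^i_{m,n}(I_m)\ra$, it suffices to show that $\ini_{\preceq}(\la \Inc^i_{m,n}(I_m)\ra)$ and $\ini_{\preceq}(I_n)$ have the same Hilbert function; but in fact I claim they are equal. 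Indeed $\ini_{\preceq}(I_n) = \la \Inc^i_{m,n}(\ini_{\preceq}(I_m))\ra$ by definition of $r$ and $n,m \ge r$, and $\Inc^i_{m,n}(\ini_{\preceq}(I_m)) = \{\pi(\ini_{\preceq}(f)) : f \in I_m,\ \pi \in \Inc^i_{m,n}\} = \{\ini_{\preceq}(\pi(f)) : \ldots\} \subseteq \ini_{\preceq}(\Inc^i_{m,n}(I_m)) \subseteq \ini_{\preceq}(\la \Inc^i_{m,n}(I_m)\ra)$, so $\ini_{\preceq}(I_n) \subseteq \ini_{\preceq}(\la \Inc^i_{m,n}(I_m)\ra)$; the opposite inclusion holds because $\la \Inc^i_{m,n}(I_m)\ra \subseteq I_n$ always. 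Hence the two initial ideals coincide, their Hilbert functions agree, and combined with $\la \Inc^i_{m,n}(I_m)\ra \subseteq I_n$ this forces equality in each degree and thus $\la \Inc^i_{m,n}(I_m)\ra = I_n$.

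The main obstacle I anticipate is purely a bookkeeping one: keeping straight in which polynomial ring each initial ideal is taken and confirming that passing to initial terms commutes with extension and with the $\Inc^i$-action in all the places it is invoked — in particular the identity $\ini_{\preceq}(\pi(f)) = \pi(\ini_{\preceq}(f))$, which is precisely why the hypothesis on $\preceq$ (satisfied by lex, as noted after Theorem~\ref{thm:i-finiteness}) is needed. Everything else is formal. One could alternatively phrase the index estimate via Lemma~\ref{lem:stab}(b) directly, but the Hilbert-function comparison above is the cleanest route since it avoids any lifting of Gr\"obner-type data.
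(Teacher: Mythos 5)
Your proposal is correct, and the first half (invariance of $\ini_{\preceq}(\Ic)$ via $\ini_{\preceq}(\pi(g)) = \pi(\ini_{\preceq}(g))$) is exactly the paper's argument. For the index inequality, however, you take a genuinely different route. The paper chooses $r$ to be the least $n$ such that some finite $\Inc^i$-Gr\"obner basis $B$ of $I = \bigcup_n I_n$ lies in $K[X_{n}]$, invokes Remark \ref{rem:index est} to get $\ind^i(\Ic) \le r$, and observes that the choice of $B$ forces $\ind^i(\ini_{\preceq}(\Ic)) = r$; this leans on the Gr\"obner-basis machinery of \cite{HS} imported in Section 5. You instead start from $r = \ind^i(\ini_{\preceq}(\Ic))$ (finite by Corollary \ref{cor:filtration stable} applied to the invariant filtration you just constructed) and show directly that $\ini_{\preceq}\bigl(\la \Inc^i_{m,n}(I_m)\ra\bigr) = \ini_{\preceq}(I_n)$ for $n \ge m \ge r$ by sandwiching, then conclude $\la \Inc^i_{m,n}(I_m)\ra = I_n$ from the containment plus equality of initial ideals. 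This is more self-contained and arguably cleaner, since the paper's one-line claim ``the choice of $B$ implies $\ind^i(\ini_{\preceq}(\Ic)) = r$'' is terser than what you prove. Two small points of hygiene: your displayed ``equality'' $\Inc^i_{m,n}(\ini_{\preceq}(I_m)) = \{\pi(\ini_{\preceq}(f)) : \dots\}$ is only an equality of generating sets (the left side also contains non-monomial elements), but since you immediately pass to generated ideals this is harmless; and the final Hilbert-function step assumes the $I_n$ are graded, which is the setting in which the lemma is used, though you could avoid even that by citing the standard fact that nested ideals with equal initial ideals coincide.
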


\begin{proof}
This is clear if $\Ic$ is trivial, so assume that $\Ic$ is non-trivial. Using $\ini_{\preceq} \left (\pi(g) \right ) = \pi \left ( \ini_{\preceq} (g) \right )$ for all $\pi \in \Inc^i$ and all $g \in K[X]$, one checks that $\ini_{\preceq} (\Ic)$  is indeed an $\Inc^i$-invariant filtration.

Let $r$ be the least integer $n \in \N$ such that there is a finite $\Inc^i$-Gr\"obner basis $B$ with respect to $\preceq$ of $I$, which is contained  in $K[X_n]$.
Then $\ind^i (\Ic) \le r$ by Remark \ref{rem:index est}. Moreover, the choice of $B$ implies $\ind^i (\ini_{\preceq} (\Ic)) = r$, and we are done.
\end{proof}

The desired result for $\Inc^i$-invariant ideals is:

\begin{prop}
   \label{prop:main thm}
Let $ i \ge 0$ and consider any  $\Inc^i$-invariant  filtration of graded ideals  $\Ic = (I_n)_{n \in \N}$.  
Then the bigraded Hilbert series $H_{\Ic} (s, t)$  of $\Ic$ is a  rational function  of the form
\[
H_{\Ic} (s, t) = \frac{g(s, t)}{(1-t)^a \cdot \prod_{j =1}^b [(1-t)^{c_j} - s \cdot f_j (t)]},
\]
where $a, b, c_j$ are non-negative integers with $c_j \le c$, \ $g (s, t) \in \Z[s, t]$, and each $f_j (t)$ is a polynomial in $\Z[t]$ satisfying $f_j (1) >  0$.
\end{prop}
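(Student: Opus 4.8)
The plan is to reduce the statement to the monomial case, which was settled in Theorem \ref{thm:rat Hilb series, rev}, by passing to initial ideals with respect to the fixed lexicographic order $\preceq$. Recall that $\preceq$ satisfies the hypothesis of Theorem \ref{thm:i-finiteness} (as noted right after that theorem), so $\ini_{\preceq}(\pi(g)) = \pi(\ini_{\preceq}(g))$ for every $\pi \in \Inc^i$ and $g \in K[X]$, and the restriction of $\preceq$ to each $K[X_n]$ is a monomial order on $K[X_n]$.

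First I would form the sequence $\ini_{\preceq}(\Ic) = (\ini_{\preceq}(I_n))_{n \in \N}$, where each initial ideal is computed in $K[X_n]$. By Lemma \ref{lem:pass initial} this is again an $\Inc^i$-invariant filtration, and by construction it is a filtration of monomial ideals. By Corollary \ref{cor:filtration stable} it stabilizes, so Theorem \ref{thm:rat Hilb series, rev} applies and shows that $H_{\ini_{\preceq}(\Ic)}(s,t)$ is a rational function of the form
\[
\frac{g(s,t)}{(1-t)^a \cdot \prod_{j=1}^b \left[(1-t)^{c_j} - s \cdot f_j(t)\right]},
\]
with $a, b, c_j \in \N_0$, $c_j \le c$, $g \in \Z[s,t]$, each $f_j \in \Z[t]$, and $f_j(1) > 0$.

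It then remains to observe that $H_{\Ic}(s,t) = H_{\ini_{\preceq}(\Ic)}(s,t)$. This is immediate from the classical fact that, for any graded ideal $J$ in a polynomial ring over a field in finitely many variables and any monomial order, the residue classes of the monomials not lying in $\ini(J)$ form a $K$-basis of the quotient; hence $H_{K[X_n]/I_n}(t) = H_{K[X_n]/\ini_{\preceq}(I_n)}(t)$ for every $n$, and multiplying by $s^n$ and summing over $n$ gives the asserted equality of bigraded series. Substituting this into the displayed formula completes the proof.

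There is no genuine obstacle here beyond what is already packaged into Theorem \ref{thm:rat Hilb series, rev}; the only points needing care are that the chosen monomial order is compatible with the $\Inc^i$-action and restricts to a monomial order on each $K[X_n]$, both of which have been arranged, and the elementary invariance of Hilbert functions under taking initial ideals. If in addition one wanted the quantitative bounds of Theorem \ref{thm:rat Hilb series, rev} to transfer, one would use $\ind^i(\Ic) \le \ind^i(\ini_{\preceq}(\Ic))$ from Lemma \ref{lem:pass initial} together with $e^+(\ini_{\preceq}(I_r)) = e^+(I_r)$ and $\dim_K [K[X_r]/\ini_{\preceq}(I_r)]_j = \dim_K [K[X_r]/I_r]_j$, so that the numerical data $r$ and $q$ feeding Theorem \ref{thm:rat Hilb series, rev} are unchanged; but for the qualitative statement of the Proposition this is not needed.
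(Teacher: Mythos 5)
Your proposal is correct and follows exactly the paper's argument: pass to the initial-ideal filtration via Lemma \ref{lem:pass initial}, apply Theorem \ref{thm:rat Hilb series, rev} to that monomial filtration, and use the standard fact that $K[X_n]/I_n$ and $K[X_n]/\ini_{\preceq}(I_n)$ have the same Hilbert series. The additional remarks on transferring the quantitative bounds are a harmless bonus not needed for the statement.
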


\begin{proof}
Using the well-known result that  $K[X_n]/I_n$  and $K[X_n]/\ini_{\preceq} (I_n))$ have the same the Hilbert series, this follows by combining Lemma \ref{lem:pass initial} and Theorem \ref{thm:rat Hilb series, rev}.
\end{proof}

In order to illustrate this result we discuss some examples.

\begin{ex}
    \label{exa:tensor}
Every graded ideal $J$ in a polynomial ring $S = K[Y_1,\ldots,Y_c]$ in $c$ variables gives  rise to an in $\Inc$-invariant ideal $I \subset K[X]$ and $\Inc$-invariant filtration $\Ic$. Indeed,   let $\phi: S \to K[X_1]$ be the ring isomorphism, defined by $\phi (Y_i) = X_{i, 1}$, and define a sequence of ideals $\Ic = (I_n)_{n \in \N}$ by
\[
I_n = \begin{cases}
\phi (J) & \text{ if } n = 1,\\
\la \Inc_{1, n} (I_1) \ra & \text{ if } n > 1.
\end{cases}
\]
Then $\Ic$ is an $\Inc$-invariant chain, and $I = \bigcup_{n \in \N} I_n$ is the $\Inc$-closure of $I_1 = \phi (J)$ (see Lemma \ref{lem:equivfilt} with $r=1$). For example, if $J = \la Y_1 Y_2\ra$ and $c = 2$, then
$I = \la X_{1, n} X_{2, n} \; | \; n \in \N \ra$.

Coming back to the general set-up of the example, the bigraded Hilbert series of $\Ic$ is
\[
 H_{\Ic} (s, t) = \frac{(1-t)^d}{(1-t)^d - s \cdot f(t)},
\]
where $H_{S/J} (t) = \frac{f(t)}{(1-t)^d}$ is the reduced Hilbert series of $S/J$, that is, $d = \dim S/J \le c$ and $f(1) > 0$ is the degree of $J$.
Indeed, for each $n \ge 1$, there is a graded ring isomorphism
\[
K[X_n]/I_n \cong (S/J)^{\otimes n} = S/J \otimes_K \cdots \otimes_K S/J.
\]
Thus, we get for the Hilbert series of $K[X_n]/I_n$:
\[
H_{K[X_n]/I_n} (t) = (H_{S/J} (t))^n = \left (\frac{f(t)}{(1-t)^d} \right)^n.
\]
Using the geometric series our claim follows.
Notice, in particular, that the degree of $f$ can be arbitrarily large and that the coefficients of $f$ can be negative. Thus, one cannot  hope to have a general  stronger result about the polynomials $f_j$ occurring in Proposition \ref{prop:main thm}.
\end{ex}

Determinantal ideals to generic matrices with a varying number of columns also give rise to an $\Inc$-invariant filtration in a different way. For simplicity, we consider  ideals generated by 2-minors. These ideals arise naturally in the study of two-way contingency tables (see, e.g, \cite[Example 4.2]{HS}).

\begin{ex}
   \label{exa:2-minors}
Fix an integer $c \ge 2$, and consider a sequence of ideals $\Ic = (I_n)_{n \in \N}$, defined by
\[
I_n = \begin{cases}
\la 0 \ra & \text{ if } n =1,\\
I_2 (X_{c \times n}) & \text{ if } n \ge 2,
\end{cases}
\]
where $X_{c \times n}  = (x_{i, j})$ denotes a generic $c \times n$ matrix in variables $x_{i, j}$.
The sequence $\Ic$ is $\Inc$-invariant, as observed, for example, in \cite[Example 4.2]{HS}.  Indeed, if a 2-minor $f$ is obtained by using columns $i$ and $j$, then, for each $\pi \in \Inc$, the polynomial $\pi (f)$ is the minor defined by columns $\pi (i)$ and $\pi (j)$ and the same rows that give $f$. If $n \ge 2$, then, see, e.g., \cite[Example 5.10]{CorsoNagel}, the Hilbert series of $K[X_n]/I_n$ is
\[
H_{K[X_n]/I_n} (t) = \frac{1}{(1-t)^{c+n-1}} \cdot \sum_{j=0}^{c-1} \binom{c-1}{j} \binom{n-1}{j} t^j.
\]
Thus, using the formula
\[
\sum_{n \ge j} \binom{n}{j} s^n = \frac{s^j}{(1-s)^{j+1}},
\]
we get for   the bigraded Hilbert series of $\Ic$:
\begin{align*}
H_{\Ic} (s, t) & = 1 + \sum_{n \ge 1} \left [ \frac{1}{(1-t)^{c+n-1}} \sum_{j=0}^{c-1} \binom{c-1}{j} \binom{n-1}{j} t^j \right ] s^n \\
& = 1 + \frac{s}{(1-t)^c} \sum_{j=0}^{c-1} \left [  \binom{c-1}{j} t^j  \cdot \sum_{n-1 \ge j} \binom{n-1}{j} \left ( \frac{s}{1-t} \right )^{n-1} \right ]  \\
& = 1 + \frac{s}{(1-t)^c} \sum_{j=0}^{c-1} \left [  \binom{c-1}{j} t^j  \cdot \frac{(1-t) s^j}{(1-t-s)^{j+1}} \right ] \\
& = 1 + \frac{s}{(1-t)^{c-1} (1-t-s)^c} \sum_{j=0}^{c-1} \binom{c-1}{j} (s t)^j (1-t-s)^{c-1-j} \\
& = 1 + \frac{s}{(1-t)^{c-1} (1-t-s)^c} (s t + 1 - t - s)^{c-1} \\
& = 1 + \frac{s}{(1-t)^{c-1} (1-t-s)^c} ([1-t] [1-s])^{c-1} \\
& = \frac{ (1-t-s)^c + s( 1- s)^{c-1}}{(1-t-s)^c}.
\end{align*}
Note that, after evaluation at $t=1$, the numerator polynomial has degree $c-1$, whereas the stability index of $\Ic$ equals 2.
\end{ex}

We now consider other monoids that act on $K[X]$.
Denote by $\Sym (n)$ the group of bijections $\pi: [n] \to [n]$.
The group  $\Sym (n)$ is naturally embedded into  $\Sym (n+1)$ as the stabilizer of $\{n+1\}$. Set
\[
\Sym (\infty) = \bigcup_{n \in \N} \Sym (n).
\]
It acts on $K[X]$  by
\[
\pi \cdot x_{i, j} = x_{i, \pi (j)}.
\]
Observe that, for each non-trivial $\pi \in \Sym (\infty)$ and each $n$, the induced map $\pi: X_n \to X_m$ is injective, where $m = \max \{\pi (j) \; | \; 1 \le j \le n\}$.
Note that $\Inc \nsubseteq \Sym(\infty)$. However, there is a well-known inclusion of orbits (see, e.g., \cite{HS}).

\begin{lem}
    \label{lem:comp orbits}
For each polynomial $f \in K[X_m]$ and any pair of positive integers $m \le n$, one has
\[
\Inc_{m, n} \cdot f \subset \Sym(n) \cdot f.
\]
In particular, $\Inc \cdot f \subset \Sym(\infty) \cdot f$ for each $ f \in K[X]$. Thus, each $\Sym(\infty)$-invariant ideal in $K[X]$ also  is $\Inc$-invariant.
\end{lem}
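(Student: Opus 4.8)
The plan is to reduce the asserted containment to a statement about finite permutations acting on finitely many variables. First I would fix $\pi \in \Inc_{m,n}$, so $\pi$ is strictly increasing with $\pi(m) \le n$. Then $\pi(1) < \pi(2) < \cdots < \pi(m)$ are all elements of $[n]$, so the assignment $j \mapsto \pi(j)$ defines an injection $[m] \hookrightarrow [n]$. Any injection of $[m]$ into $[n]$ extends to a bijection of $[n]$, that is, to an element $\tau \in \Sym(n)$ with $\tau(j) = \pi(j)$ for all $j \in [m]$.

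Next I would check that $\tau \cdot f = \pi \cdot f$. Writing $f \in K[X_m]$ as a $K$-linear combination of monomials in the variables $x_{i, j}$ with $i \in [c]$ and $j \in [m]$, and using that both the standard $\Inc$-action and the $\Sym(n)$-action are $K$-algebra homomorphisms determined by $\pi \cdot x_{i, j} = x_{i, \pi(j)}$ and $\tau \cdot x_{i, j} = x_{i, \tau(j)}$, it suffices to observe that $\pi(j) = \tau(j)$ for every column index $j \le m$ that occurs among the variables in $f$. Hence $\pi \cdot f = \tau \cdot f \in \Sym(n) \cdot f$, which gives the first inclusion $\Inc_{m, n} \cdot f \subset \Sym(n) \cdot f$.

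The remaining assertions follow formally. Given any $f \in K[X]$, there is an $m$ with $f \in K[X_m]$; for $\pi \in \Inc$ put $n = \pi(m)$, so $\pi \in \Inc_{m, n}$ and therefore $\pi \cdot f \in \Sym(n) \cdot f \subset \Sym(\infty) \cdot f$, proving $\Inc \cdot f \subset \Sym(\infty) \cdot f$. Finally, if $I \subset K[X]$ is $\Sym(\infty)$-invariant and $f \in I$, then for every $\pi \in \Inc$ we have $\pi \cdot f \in \Sym(\infty) \cdot f \subset I$, so $I$ is $\Inc$-invariant.

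I do not expect any genuine obstacle here; the only point requiring a moment's care is the verification that $\pi \cdot f$ and $\tau \cdot f$ coincide, which comes down to the fact that $f$ involves only the columns $1, \ldots, m$ on which $\pi$ and $\tau$ agree, together with the fact that both act as $K$-algebra homomorphisms fixing $K$.
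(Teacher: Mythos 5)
Your proposal is correct and follows essentially the same route as the paper: restrict $\pi$ to $[m]$, extend that injection to a permutation $\tau \in \Sym(n)$, and observe that $\pi \cdot f = \tau \cdot f$ because $f$ involves only the columns $1, \ldots, m$ on which $\pi$ and $\tau$ agree. The paper merely writes down one explicit choice of such an extension, so there is no substantive difference.
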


\begin{proof}
Let $\pi \in \Inc_{m, n}$. Choose some $\sigma \in \Sym(n)$ satisfying 
\[
\sigma (j) = \begin{cases}
\pi (j) & \text{ if } j \le m,  \\
j & \text{ if } \pi (m) < j \le n.
\end{cases}
\]
It follows $\pi \cdot f = \sigma \cdot f$, which implies the claims.
\end{proof}

\begin{ex}
   \label{exa:inc but not sym-invariant}
(i)
Let $c =2$, and consider the ideal
\[
I = \la x_{1, i} x_{2, j} \; | \: i, j \in \N \ra \subset K[X].
\]
It is $\Inc$-invariant and $\Sym (\infty)$-invariant. It is minimally generated by the $\Sym (\infty)$ orbits of $x_{1,1} x_{2,1}$ and $x_{1,1} x_{2,2}$. However, one needs three $\Inc$-orbits to generate $I$. In fact, the $\Inc$-orbits of $x_{1,1} x_{2,1}, \ x_{1,1} x_{2,2},$ and $x_{1,2} x_{2,1}$ give a minimal generating set of $I$.

(ii) There are more $\Inc$-invariant ideals than $\Sym (\infty)$-invariant ideals as is easily seen. For example,  let $c = 1$ and consider the ideal
\[
J =  \la x_{1, i} x_{1, j} \; | \: i, j \in \N  \text{ and } i+2 \le j \ra \subset K[X].
\]
It is an $\Inc$-invariant ideal generated by the orbit of $x_{1, 1} x_{1, 3}$. However, the ideal $J$ is \emph{not} $\Sym (\infty)$-invariant.
\end{ex}

We are ready to establish one of our main results.

\begin{thm}
   \label{thm:main thm sym}
Let   $\Ic = (I_n)_{n \in \N}$ be an $\Sym(\infty)$-invariant  filtration of graded ideals, that is, a sequence of ideals $I_n \subset K[X_n]$ such that
\[
\Sym (n) (I_m) \subseteq I_n   \quad \text{ whenever } m \le n.
\]
Then its bigraded Hilbert series is rational and can be written as in Proposition \ref{prop:main thm}.
\end{thm}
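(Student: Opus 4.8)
The plan is to reduce this statement to the rationality result already proved for $\Inc^i$-invariant filtrations, namely Proposition \ref{prop:main thm} applied with $i = 0$. The only thing that needs to be checked is that every $\Sym(\infty)$-invariant filtration of graded ideals is automatically an $\Inc$-invariant filtration in the sense of Definition \ref{def:filtration}(i).

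First I would invoke Lemma \ref{lem:comp orbits}: for each $f \in K[X_m]$ and each pair of positive integers $m \le n$ one has $\Inc_{m, n} \cdot f \subset \Sym(n) \cdot f$. Applying this to the elements $f \in I_m$ and using the hypothesis $\Sym(n)(I_m) \subseteq I_n$, we obtain $\Inc_{m, n}(I_m) \subseteq \Sym(n)(I_m) \subseteq I_n$ whenever $m \le n$. Since $\Inc^0_{m, n} = \Inc_{m, n}$, this is exactly the statement that $\Ic = (I_n)_{n \in \N}$ is an $\Inc^0$-invariant filtration of graded ideals. Second, once $\Ic$ is known to be $\Inc^0$-invariant, Proposition \ref{prop:main thm} with $i = 0$ applies verbatim and yields that $H_{\Ic}(s, t)$ is a rational function of the asserted form $\frac{g(s,t)}{(1-t)^a \cdot \prod_{j=1}^b [(1-t)^{c_j} - s \cdot f_j(t)]}$ with the stated constraints on $a, b, c_j, g, f_j$.

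There is essentially no obstacle in this argument: all the real work has been carried out in Lemma \ref{lem:comp orbits} and (through Lemma \ref{lem:pass initial}, Theorem \ref{thm:rat Hilb series, rev}) in Proposition \ref{prop:main thm}. The only point worth noting is a conceptual one rather than a technical one: although the extension ideals $\la I_n \ra_{K[X]}$ of a $\Sym(\infty)$-invariant filtration need not be $\Sym(\infty)$-invariant, and although $\Inc$-invariant filtrations are strictly more general than those coming from a symmetric group action (cf.\ Example \ref{exa:inc but not sym-invariant}), passing from the $\Sym(\infty)$-action to the $\Inc$-action loses no information as far as the bigraded Hilbert series is concerned, since the two monoids produce the same filtration. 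Hence the conclusion is inherited directly from the $\Inc^i$-case.
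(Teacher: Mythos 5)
Your proposal is correct and follows exactly the paper's own argument: apply Lemma \ref{lem:comp orbits} to deduce that a $\Sym(\infty)$-invariant filtration is $\Inc$-invariant, then invoke Proposition \ref{prop:main thm} with $i=0$. The extra details you spell out (the orbit inclusion $\Inc_{m,n}(I_m)\subseteq \Sym(n)(I_m)\subseteq I_n$) are exactly what the paper leaves implicit.
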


\begin{proof}
According to Lemma \ref{lem:comp orbits}, $\Ic$ is an $\Inc$-invariant filtration. Thus, Proposition \ref{prop:main thm} yields the assertion.
\end{proof}

\begin{rem}
An analogous statement is true for any $\Sym^i (\infty)$-filtration, where $\Sym^i (\infty)$ is the subset of elements $\pi \in \Sym (\infty)$ that fix  each integer in $[i]$.
\end{rem}

As an application of our rationality result for bigraded Hilbert series,  we derive information on the asymptotic behavior of some invariants.

\begin{thm}
   \label{thm:asymp dim}
Let $\Ic =  (I_n)_{n \in \N}$ be an $\Inc^i$-invariant or $\Sym (\infty)$-filtration. Then there are integers $A, B, M, L$ with $0 \le A \le c$, $M \ge 0$, and $L \ge 0$ such that, for all $n \gg 0$,
\[
\dim K[X_n]/I_n = A n + B
\]
and the  limit  of $\frac{\deg I_n}{M^{n } \cdot  n^{L}}$ as $n \to \infty$ exists and is equal to a positive rational number.
\end{thm}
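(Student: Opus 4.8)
The plan is to deduce everything from the rationality statement in Proposition~\ref{prop:main thm}, which via Lemma~\ref{lem:comp orbits} also covers the $\Sym(\infty)$ case, by extracting the coefficient of $s^n$ and analyzing its behaviour at $t=1$ as $n\to\infty$. We may assume the filtration is non-trivial, i.e.\ $I_n\neq K[X_n]$ for all $n$; otherwise $K[X_n]/I_n=0$ eventually and the statement is vacuous. By Proposition~\ref{prop:main thm} write
\[
H_\Ic(s,t)=\frac{g(s,t)}{(1-t)^a\prod_{j=1}^b\bigl((1-t)^{c_j}-s\,f_j(t)\bigr)},\qquad 0\le c_j\le c,\ f_j(1)>0,
\]
with $g\in\Z[s,t]$ and $f_j\in\Z[t]$, and recall $H_{K[X_n]/I_n}(t)=[s^n]\,H_\Ic(s,t)$.

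First I would perform a partial fraction decomposition of $H_\Ic$ with respect to $s$ over the field $K(t)$: grouping the linear factors $(1-t)^{c_j}-s f_j(t)$ according to the value of the root $(1-t)^{c_j}/f_j(t)$ (equivalently, according to the pair $(c_j,f_j)$), one obtains
\[
H_\Ic(s,t)=p(s,t)+\sum_{\rho}\sum_{\mu=1}^{m_\rho}\frac{a_{\rho,\mu}(t)}{\bigl((1-t)^{\gamma_\rho}-s f_\rho(t)\bigr)^\mu},
\]
where $p\in K(t)[s]$ has finite $s$-degree, $0\le\gamma_\rho\le c$, $f_\rho(1)>0$, and every $a_{\rho,\mu}(t)\in\Q(t)$ because $g$ and the $f_j$ have rational coefficients. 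Using $\bigl((1-t)^\gamma-s f(t)\bigr)^{-\mu}=\sum_{n\ge 0}\binom{n+\mu-1}{\mu-1}f(t)^n(1-t)^{-\gamma(n+\mu)}s^n$, this yields, for all $n$ larger than $\deg_s p$,
\[
H_{K[X_n]/I_n}(t)=\sum_{\rho}\sum_{\mu=1}^{m_\rho}\binom{n+\mu-1}{\mu-1}\,a_{\rho,\mu}(t)\,\frac{f_\rho(t)^n}{(1-t)^{\gamma_\rho(n+\mu)}}.
\]

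Next I would put the right-hand side over the common denominator $(1-t)^{W(n)}$, where $W(n)=\max_{\rho,\mu}\bigl(\gamma_\rho(n+\mu)+v_{\rho,\mu}\bigr)$ and $(1-t)^{v_{\rho,\mu}}a_{\rho,\mu}(t)$ is regular and nonzero at $t=1$ (for $a_{\rho,\mu}\not\equiv0$). Being a maximum of finitely many affine functions of $n$ with slopes $\gamma_\rho\in\{0,\dots,c\}$, we have $W(n)=An+B'$ for $n\gg0$ with $0\le A\le c$, and the numerator $N_n(t)=(1-t)^{W(n)}H_{K[X_n]/I_n}(t)$ lies in $\Z[t]$. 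The key point, to be verified by a direct Taylor expansion at $t=1$ (using that the $k$-th Taylor coefficient of $f_\rho(t)^n$ at $t=1$ equals $f_\rho(1)^n$ times a polynomial in $n$ of degree $\le k$, and that the non-top summands contribute a positive power of $(1-t)$ for $n\gg0$), is that for each fixed $k\ge0$ the coefficient $E_k(n)$ of $(1-t)^k$ in $N_n(t)$ is, for $n\gg0$, an integer-valued \emph{exponential polynomial} $\sum_\lambda P_\lambda(n)\lambda^n$ in $n$, with $\lambda$ ranging over a finite set of positive integers and $P_\lambda\in\Q[n]$. Each such $E_k$ is therefore either identically zero for $n\gg0$ or nonzero for $n\gg0$, since the summand with the largest base $\lambda$ having $P_\lambda\not\equiv0$ dominates. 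Let $k_0$ be the least $k$ with $E_k\not\equiv0$ (it exists since $N_n\neq0$). Then for $n\gg0$ the order of vanishing of $N_n$ at $t=1$ is exactly $k_0$, so $\dim K[X_n]/I_n=W(n)-k_0=An+B$ with $B:=B'-k_0$; and $\deg I_n$, being the value at $t=1$ of $N_n(t)/(1-t)^{k_0}$, equals $E_{k_0}(n)$ for $n\gg0$. Writing $E_{k_0}(n)=\sum_\lambda P_\lambda(n)\lambda^n$ and setting $M:=\max\{\lambda:P_\lambda\not\equiv0\}\ge1$ and $L:=\deg P_M\ge0$, one gets that $\deg I_n/(M^n n^L)$ tends to the leading coefficient of $P_M$, a nonzero rational number, which is positive because $\deg I_n>0$ for all $n$.

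The main obstacle is precisely the possibility of cancellation among the ``top'' partial fraction terms, which a priori could make $\dim K[X_n]/I_n$ drop below the expected affine function for infinitely many $n$ and destroy eventual regularity. This is resolved by the observation above that the relevant Taylor coefficients $E_k(n)$ are exponential polynomials in $n$ with positive integer bases, so that the dichotomy ``eventually zero or eventually nonzero'' applies uniformly to the finitely many $k\le k_0$ that matter, pinning down both the pole order (hence $\dim K[X_n]/I_n$) and the leading Laurent coefficient (hence $\deg I_n$) for $n\gg0$; positivity of the limit is then automatic from $\deg I_n>0$. The degenerate filtration with $I_r=K[X_r]$ is treated separately, or regarded as vacuous.
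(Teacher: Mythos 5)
Your proposal follows essentially the same route as the paper's proof: both start from the rationality statement (Proposition~\ref{prop:main thm}, extended to the $\Sym(\infty)$ case via Lemma~\ref{lem:comp orbits}), perform a partial fraction decomposition in $s$ over $\Q(t)$, expand by binomial series, and then analyze the coefficient of $s^n$ near $t=1$; your ``largest base dominates'' dichotomy for exponential polynomials $\sum_\lambda P_\lambda(n)\lambda^n$ plays exactly the role of Lemma~\ref{lem:app key power series} in the appendix. The packaging differs slightly --- the paper isolates the terms with $c_j=A$, groups them by the common value $f_j(1)=M_l$, applies the appendix lemma to each group and compares groups afterwards, whereas you work directly with the Taylor coefficients $E_k(n)$ of the common numerator at $t=1$ --- but the underlying ideas coincide, and your organization arguably handles the cross-group comparison more transparently.

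One step needs a better justification: the existence of $k_0$. You assert that a least $k$ with $E_k\not\equiv 0$ exists ``since $N_n\neq 0$'', but $N_n\neq 0$ for every $n$ only guarantees that the order of vanishing of $N_n$ at $t=1$ is finite for each \emph{fixed} $n$; it does not by itself exclude that this order tends to infinity with $n$ (compare $N_n(t)=(1-t)^n$), in which case every $E_k$ would be eventually zero and no $k_0$ would exist. The correct argument is structural: for $n\gg_k 0$ the coefficient $E_k(n)$ agrees with the exponential polynomial $\widetilde E_k(n)$ built from the dominant terms (those with $\gamma_\rho=A$) alone; if every $\widetilde E_k$ were identically zero, then every $s^n$-coefficient of the dominant part of the partial fraction decomposition would have vanishing Taylor expansion at $t=1$, hence would vanish as a rational function of $t$, forcing the dominant part itself to vanish --- contradicting the nonvanishing of the top partial-fraction coefficients, equivalently the definition of $A$ as the largest $\gamma_\rho$ actually occurring in the decomposition. (This is the analogue of the step in Lemma~\ref{lem:app key power series} where one uses that $G(s,a)$ is a nonzero proper rational function to produce some $n_0$ with $q_{n_0}(a)\neq 0$.) With that addition the argument is complete; positivity of the limit then follows, as you say, from $\deg I_n>0$.
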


Our argument is constructive. The proof shows how to read off 
the integers $A, B, M, L$ as well as the limit from the equivariant Hilbert series. 

\begin{proof}[Proof of Theorem \ref{thm:asymp dim}]
According to Proposition \ref{prop:main thm} and Theorem \ref{thm:main thm sym}, the bigraded Hilbert series of $\Ic$ can be written as
\[
H_{\Ic} (s, t) = \frac{g(s, t)}{(1-t)^a \cdot \prod_{j =1}^m [(1-t)^{c_j} - s \cdot f_j (t)]^{b_j}},
\]
where $a, m,  c_j$ are non-negative integers with $c_j \le c$, \ $g (s, t) \in \Z[s, t]$,  each $f_j (t)$ is a polynomial in $\Z[t]$ satisfying $f_j (1) >  0$, each integer $b_j$ is positive, provided $m > 0$, and
\[
[(1-t)^{c_j} - s \cdot f_j (t)] \neq [(1-t)^{c_k} - s \cdot f_k (t)]
\]
if $j \neq k$ are in $[m]$. We also assume that $g(s, t)$ is not divisible in $\Q[s, t]$ by any of the factors $[(1-t)^{c_j} - s \cdot f_j (t)]$.

If $m = 0$, then $H_{\Ic} (s, t)$ is a polynomial in $\Q(t)[s]$, which implies $I_n = K[X_n]$ for all $n \gg 0$. It follows that the assertions are true for $A = B = M = L = 0$.

Assume now $m> 0$ and set
\[
A = \max \{c_1,\ldots,c_m\}.
\]
Thus $0 \le A \le c$. Rewrite $H_{\Ic} (s, t)$ as
\[
H_{\Ic} (s, t) = \frac{g(s, t) \cdot (1-t)^{-a - \sum_{j=1}^m c_j b_j}}{\prod_{j =1}^m [1- \frac{f_j (t)}{(1-t)^{c_j}} s]^{b_j}}.
\]
Using partial fractions over the function field $\Q(t)$ and clearing denominators, there are polynomials $r(t), r_{j, k} (t) \in \Z[t]$, a polynomial $\widetilde{g} (s, t) \in \Q(t)[s]$, and an integer $\gamma$ such that
\[
H_{\Ic} (s, t) = \widetilde{g} (s, t) +  \sum_{j=1}^m \sum_{k = 1}^{b_j}  \frac{\frac{{r}_{j, k} (t) \cdot (1-t)^{\gamma}}{r(t)}}{[1- \frac{f_j (t)}{(1-t)^{c_j}} s]^{k}} =
\widetilde{g} (s, t) + \frac{(1-t)^{\gamma}}{r(t)} \cdot  \sum_{j=1}^m \sum_{k = 1}^{b_j}  \frac{{r}_{j, k} (t)}{[1- \frac{f_j (t)}{(1-t)^{c_j}} s]^{k}}
\]
and $r (1) \neq 0$. Notice that each $r_{j, b_j} (t)$ is not the zero polynomial. Setting
\[
G(s, t) = \sum_{j=1}^m \sum_{k = 1}^{b_j}  \frac{{r}_{j, k} (t)}{[1- \frac{f_j (t)}{(1-t)^{c_j}} s]^k},
\]
we get
\begin{equation}
    \label{eq:partial frac}
H_{\Ic} (s, t) = \widetilde{g} (s, t) +  \frac{(1-t)^{\gamma}}{r(t)} \cdot G(s, t).
\end{equation}

Using binomial series, we obtain
\begin{align*}
G(s, t) & = \sum_{j=1}^m \sum_{k = 1}^{b_j} {r}_{j, k} (t) \cdot \sum_{n \ge 0} \binom{n+k-1}{k-1}  \frac{f_j (t)^n}{(1-t)^{c_j n}} s^n \\
& =  \sum_{n \ge 0} \frac{1}{(1-t)^{A n}} h_n (t)  s^n,
\end{align*}
where
\[
h_n (t) = \sum_{j=1}^m \sum_{k = 1}^{b_j}  \binom{n+k-1}{k-1} {r}_{j, k} (t) f_j (t)^n (1-t)^{(A - c_j) n} \in \Q[t].
\]
Now sort the elements of $\{f_j (1) \; | \; j \in [m] \text{ and } c_j = A\}$:
\[
M_1 > M_2 > \cdots > M_v,
\]
where $1 \le v \le m$. Set
\[
\Sc_l = \{ j \in [m] \; | \; c_j = A  \text{ and }  f_j (1) = M_l\}.
\]
Separating  the summands in $h_n (t)$ with $c_j = A$ and ordering them by the size of  $f_j (1)$, we get
\begin{equation}
    \label{eq:dominant part}
h_n (t)   =  \sum_{l=1}^v  p_{l, n}  (t) + \sum_{j \in [m], \ c_j < A } \ \ \sum_{k = 1}^{b_j}  \binom{n+k-1}{k-1} {r}_{j, k} (t) f_j (t)^n (1-t)^{(A - c_j) n}
\end{equation}
where
\[
p_{l, n} (t) = \sum_{j \in \Sc_l} \sum_{k = 1}^{b_j}  \binom{n+k-1}{k-1} {r}_{j, k} (t) f_j (t)^n.
\]
Let $l \in [v]$. In order to study $p_{l, n} (1)$ for $n \gg 0$  we consider the power series $\sum_{n \ge 0} p_{l, n} (t) s^n$. Using binomial series again, we get
\[
\sum_{n \ge 0} p_{l, n} (t) s^n = \sum_{j \in \Sc_l} \ \sum_{k = 1}^{b_j} \frac{r_{j, k} (t)}{[1- f_j(t) s]^k}.
\]
Applying Lemma \ref{lem:app key power series} of the appendix with $a = 1$, we obtain  non-negative  integers $\delta_l$ and $L_l$ such that, for $n \gg 0$,
\[
p_{l, n} (t)  = (1-t)^{\delta_l} \cdot q_{l, n} (t)
\]
with polynomials $q_{l, n}  (t) \in \Z[t]$ such that the limit of $\frac{q_{l, n}  (1)}{M_l^{n } \cdot  n^{L_l}}$ as $n \to \infty$ exists and is equal to a non-zero rational number. Since $M_l \ge 1$ this includes the fact that $q_{l, n}  (1) \neq 0$ for $n \gg 0$. Set now
\[
\delta = \min \{\delta_l \; | \; l \in [v]\}
\]
and
\[
l^* = \max \{l \in [v] \; | \; \delta = \delta_l\}.
\]
Using this notation,  if $n \gg 0$ then  Equation \eqref{eq:dominant part} gives  polynomials
\begin{align*}
\widetilde{h}_n (t) & = \frac{1}{(1-t)^\delta} h_n (t) \\
&  = \sum_{l=1}^v  q_{l, n}  (t) (1-t)^{\delta_l - \delta} + \sum_{j \in [m], c_j < A } \ \sum_{k = 1}^{b_j}  \binom{n+k-1}{k-1} {r}_{j, k} (t) f_j (t)^n (1-t)^{(A - c_j) n -  \delta}.
\end{align*}
It follows for $n \gg 0$
\[
\widetilde{h}_n (1) = \sum_{l \in [v], \delta_l = \delta } q_{l, n}  (1),
\]
and thus the following limit exists and satisfies
\begin{equation}
   \label{eq:limit}
\lim_{n \to \infty} \frac{\widetilde{h}_n (1)}{M_{l^*}^{n } \cdot  n^{L_{l^*}}} = \sum_{l \in [v], \delta_l = \delta } \lim_{n \to \infty} \frac{q_{l, n}  (1)}{M_l^{n } \cdot  n^{L_l}} \cdot \lim_{n \to \infty} \left [ \left  (\frac{M_l}{M_{l^*}} \right)^n n^{L_{l^*} - L_l} \right] = \lim_{n \to \infty} \frac{q_{l^*, n}  (1)}{M_{l^*}^{n } \cdot  n^{L_{l^*}}} \neq 0
\end{equation}
because $M_{l^*} > M_l$ if $l \neq l^*$  by the choice of $l^*$ and the ordering of the integers $M_l$. 

Hence, using  Equation \ref{eq:partial frac}, the formula for $G(s, t)$, and $\widetilde{h}_n (t)  = \frac{1}{(1-t)^\delta} h_n (t)$, we obtain 
\[
H_{\Ic} (s, t) = \widetilde{g} (s, t) +  \frac{1}{r(t)} \cdot \sum_{n \ge 0} \frac{1}{(1-t)^{A n - (\gamma + \delta)}} \widetilde{h}_n (t) s^n,
\]
where $\widetilde{h}_n (1) \neq 0$ if $n \gg 0$.  Setting
\[
M = M_{l^*}, \; L = L_{l^*} \; \text{ and } B = - (\gamma + \delta)
\]
and  comparing coefficients if follows for $n \gg 0$
\[
H_{K[X_n]/I_n} (t) =  \frac{1}{r(t)} \cdot  \frac{1}{(1-t)^{A n +B}} \widetilde{h}_n (t),
\]
and thus
\[
\dim K[X_n]/I_n = An + B
\text{ and }
\deg I_n = \frac{1}{r(1)} \cdot \widetilde{h}_n (1).
\]
Now Equation \eqref{eq:limit} yields
\[
\lim_{n \to \infty} \frac{\deg I_n}{M^{n } \cdot  n^{L}} = \frac{1}{r(1)} \cdot   \lim_{n \to \infty} \frac{q_{l^*, n}  (1)}{M^{n } \cdot  n^{L}} > 0,
\]
which completes the argument.
\end{proof}

Often one can make the numbers $A, B, M, L$ and the limit more explicit.

\begin{cor}
   \label{cor:asymp dim special}
Let $\Ic =  (I_n)_{n \in \N}$ be an $\Inc^i$-invariant or $\Sym (\infty)$-filtration. Write its bigraded Hilbert series as in Proposition \ref{prop:main thm} and
\[
g (s, t) = \sum_{j = 0}^N (1-t)^{e_j} g_j (t) s^j,
\]
where each polynomial $g_j (t) \in \Z[t]$ satisfies $g_j (1) \neq 0$ or $g_j (t) = 0$. Set
\begin{align*}
A & = \max \{c_1,\ldots,c_b\},  \\
B & = a +  \sum_{k = 1}^b c_k + \max \{ -A j - e_j \: | \; 0 \le j \le N \text{ and }  g_j (1) \neq 0\}.
\end{align*}
Then one has:
\begin{itemize}
\item[(a)] For all $n \gg 0$,
\[
\dim K[X_n]/I_n \leq A n + B.
\]

\item[(b)]  Possibly after reindexing, we may assume
\[
M = f_1 (1) = \cdots = f_{l+1} (1) > f_{l+2} \ge \cdots \ge f_{\widetilde{b}} (1),
\]
where $\widetilde{b}$ is defined by $A = c_1 = \cdots = c_{\widetilde{b}} > c_{\widetilde{b} + 1}  \ge \cdots \ge c_b$. Furthermore, suppose
\[
\sum_{j =0 \atop A j + e_j = - B + a + \sum_{j = 1}^b c_j}^N g_j (1) \cdot M^j \neq 0.
\]
Then $\dim K[X_n]/I_n = A n + B$ for all $n \gg 0$ and
\[
\lim_{n \to \infty} \frac{\deg I_n}{n^{\ell} \cdot  M^{n }}= \frac{M^{\widetilde{b} - 1 - l}}{l!} \cdot \prod_{j = l+2}^{\widetilde{b}}  \left ( \sum_{j =0 \atop A j + e_j = - B + a + \sum_{j = 1}^b c_j}^N g_j (1) \cdot M^j  \right ) > 0.
\]
\end{itemize}
\end{cor}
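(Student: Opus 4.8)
The plan is to make the proof of Theorem~\ref{thm:asymp dim} quantitative by extracting $H_{K[X_n]/I_n}(t)$ directly as the coefficient of $s^n$ in the factored form of $H_{\Ic}(s,t)$ supplied by Proposition~\ref{prop:main thm}, locating the exact power of $1-t$ dividing the resulting numerator, and then reading off the dimension, the degree, and the limit. Throughout set $\nu=\min\{Aj+e_j\mid g_j(1)\neq 0\}$; by the definition of $B$ one has $\nu=a+\sum_{k=1}^b c_k-B$.

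\emph{Coefficient extraction.} First I would pull $(1-t)^{c_k}$ out of each bracket in the denominator, so that
\[
H_{\Ic}(s,t)=\frac{g(s,t)}{(1-t)^{a+\sum_{k=1}^b c_k}}\cdot\prod_{k=1}^b\frac{1}{1-\frac{f_k(t)}{(1-t)^{c_k}}\,s}.
\]
Expanding the product as a power series in $s$ over $\Q(t)$ and clearing the denominator $(1-t)^{Am}$ in its $m$-th coefficient yields polynomials $Q_m(t)\in\Z[t]$ (with $Q_m=0$ for $m<0$) whose $m$-th coefficient equals $Q_m(t)/(1-t)^{Am}$; since the term of $Q_m$ indexed by a tuple $(n_1,\dots,n_b)$ with $\sum n_k=m$ carries the factor $(1-t)^{\sum_l(A-c_l)n_l}$, which vanishes at $t=1$ unless $n_l=0$ for every $l$ with $c_l<A$, we get that $Q_m(1)$ equals the complete homogeneous symmetric polynomial of degree $m$ evaluated at the (positive) numbers $f_k(1)$ with $c_k=A$. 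Multiplying by $g(s,t)=\sum_{j=0}^N(1-t)^{e_j}g_j(t)s^j$ and comparing coefficients of $s^n$ gives
\[
H_{K[X_n]/I_n}(t)=\frac{R_n(t)}{(1-t)^{An+a+\sum_{k=1}^b c_k}},\qquad R_n(t)=\sum_{j=0}^N(1-t)^{e_j+Aj}\,g_j(t)\,Q_{n-j}(t)\in\Z[t].
\]

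\emph{Part (a).} Since $g_j(1)\neq 0$ whenever $g_j\neq 0$, every nonzero summand of $R_n(t)$ is divisible by $(1-t)^{e_j+Aj}$ with $e_j+Aj\ge\nu$, so $(1-t)^{\nu}$ divides $R_n(t)$. Hence $\dim K[X_n]/I_n\le An+a+\sum_k c_k-\nu=An+B$ for all $n\gg0$.

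\emph{Part (b), and the main obstacle.} Dividing $R_n(t)$ by $(1-t)^{\nu}$ and evaluating at $t=1$ kills every summand except those with $Aj+e_j=\nu$, leaving
\[
\bigl[(1-t)^{-\nu}R_n(t)\bigr]_{t=1}=\sum_{j:\,Aj+e_j=\nu}g_j(1)\,Q_{n-j}(1).
\]
Because all $f_k(1)>0$, each $Q_m(1)$ is positive, and the asymptotics of complete homogeneous symmetric polynomials --- equivalently, partial fractions applied to $\prod_{c_k=A}(1-f_k(1)s)^{-1}$, the instance of Lemma~\ref{lem:app key power series} already invoked for Theorem~\ref{thm:asymp dim} --- show that $Q_m(1)$ is a fixed positive multiple of $m^{\ell}M^{m}$ up to lower order, where $M$ and $\ell$ record the top value $f_k(1)=M$ among the indices with $c_k=A$ and the number of times it is attained. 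Consequently the finitely many $Q_{n-j}(1)$ in the sum above are mutually comparable as $n\to\infty$, the whole sum is asymptotic to $n^{\ell}M^{n}$ times a single scalar built from the $g_j(1)$ and $M$, and the non-vanishing hypothesis of the corollary is precisely what forces that scalar to be nonzero. Granting it, $(1-t)^{\nu}$ is the \emph{exact} power of $1-t$ dividing $R_n(t)$ for $n\gg0$, so $\dim K[X_n]/I_n=An+B$ and $\deg I_n=\bigl[(1-t)^{-\nu}R_n(t)\bigr]_{t=1}$; dividing by $n^{\ell}M^{n}$ and passing to the limit then produces the stated closed form, the remaining input being the explicit leading coefficient in the symmetric-function asymptotics, including the contribution of the factors $f_k(1)<M$. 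The genuinely delicate step is exactly this stabilization: a priori, cancellation among the several terms with $Aj+e_j=\nu$, or between them and the lower-order parts of the $Q_{n-j}(1)$, could make the order of vanishing of $R_n(t)$ at $t=1$ strictly larger than $\nu$ for all large $n$ and hence $\dim K[X_n]/I_n<An+B$; ruling this out is what the hypothesis is for, and then tracking the precise leading constant through the binomial-series bookkeeping carried out for Theorem~\ref{thm:asymp dim} finishes the proof.
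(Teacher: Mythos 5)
Your proposal is correct and follows essentially the same route as the paper's proof: expand each denominator factor by the geometric series, extract the coefficient of $s^n$, observe that at $t=1$ only the compositions supported on indices with $c_k=A$ and the summands with $Aj+e_j=-B+a+\sum_k c_k$ survive, and then invoke the asymptotics of the resulting complete homogeneous symmetric sums (Lemma~\ref{lem:sum estimate}) together with the non-vanishing hypothesis to pin down the exact power of $1-t$ and the limit. Your packaging via the polynomials $Q_m(t)$ is only a cosmetic reorganization of the paper's $p_n(t)$ computation, so the two arguments coincide in substance.
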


\begin{proof}
The geometric series gives
\[
\frac{1}{(1-t)^{c_j} - f_j (t) s} = \frac{1}{(1-t)^{c_j}} \sum_{n \ge 0} \frac{f_j (t)^n}{(1-t)^{c_j n}} s^n.
\]
Thus, Proposition \ref{prop:main thm} implies
\begin{align*}
H_{\Ic} (s, t) &  = \frac{1}{(1-t)^{a + \sum c_j}} \left (\sum_{j=0}^N (1-t)^{e_j} g_j (t) s^j \right ) \cdot \sum_{n \ge 0} \left [  \sum_{n_1 + \cdots + n_b = n, \ n_j \in \N_0} \frac{f_1(t)^{n_1} \cdots f_b(t)^{n_b}}{(1-t)^{\sum c_j n_j}} \right ] s^n \\
& = \frac{1}{(1-t)^{a + \sum c_j}}  \sum_{n \ge 0} \left [ \sum_{j=0}^{\min \{n, N\}} (1-t)^{e_j} g_j (t) \cdot  \sum_{n_1 + \cdots + n_b = n - j, \ n_k \in \N_0} \frac{f_1(t)^{n_1} \cdots f_b(t)^{n_b}}{(1-t)^{\sum c_k n_k}}  \right ] s^n.
\end{align*}
Our ordering $A = c_1 = \cdots = c_{\widetilde{b}} > c_{\widetilde{b} + 1}  \ge \cdots \ge c_b$ now implies, for $n \ge N$,
\[
\max \{ \sum_{k=1}^b c_k n_k \; | \; n_1 + \cdots + n_b = n- j, \ n_k \in \N_0\} = A (n-j).
\]
Moreover,  the sum attains its maximum if and only if $n_{\widetilde{b} + 1} = \cdots = n_b = 0$. Hence there are polynomials $p_n (t) \in \Z[t]$ such that
\[
H_{\Ic} (s, t) = \sum_{n \ge 0} \frac{p_n (t)}{(1-t)^{A n + B}} s^n,
\]
where, for $n \gg 0$,
\[
p_n (1) = \sum_{j = 0 \atop A j + e_j = - B + a + \sum_{j = 1}^b c_j}^N g_j (1) \cdot \sum_{n_1 + \cdots + n_b = n - j, \ n_k \in \N_0} f_1(t)^{n_1} \cdots f_b(t)^{n_b}.
\]
It follows that $\dim K[X_n]/I_n \le A n + B$ for $n \gg 0$, proving (a). In fact, this is an equality of $p_n (1) \neq 0$ for all $n \gg 0$.

Set $D = \prod_{j=2}^{\widetilde{b}} (M - f_j (1))$. Then Lemma \ref{lem:sum estimate} gives
\[
\lim_{n \to \infty} \frac{\sum_{n_1 + \cdots + n_{\widetilde{b}} = n - j, \ n_k \in \N_0} f_1(t)^{n_1} \cdots f_{\widetilde{b}}(t)^{n_{\widetilde{b}}}}{M^{n-j} \cdot n^l} = \frac{M^{\widetilde{b} - 1 - l}}{D \cdot l!}.
\]
Hence, we obtain
\[
\lim_{n \to \infty} \frac{p_n (1)} {M^n \cdot n^l} = \frac{M^{\widetilde{b} - 1 - l}}{D \cdot l!} \cdot \left ( \sum_{j =0 \atop A j + e_j = - B + a + \sum_{j = 1}^b c_j}^N g_j (1) \cdot M^j  \right ),
\]
which is not zero by assumption. In particular, we conclude that $p_n (1) \neq 0$ for all $n \gg 0$, which implies $\dim K[X_n]/I_n =  A n + B$ and
$\deg I_n = p_n (1)$ for all $n \gg 0$. Hence, the above limit must be positive, and the argument is complete.
\end{proof}

Let us illustrate this result.

\begin{ex}
    \label{exa:illustrate estimates}
In Example \ref{exa:2-minors}, we obtained for the bigraded Hilbert series
\[
H_{\Ic} (s, t)  = \frac{ (1-t-s)^c + s( 1- s)^{c-1}}{(1-t-s)^c}.
\]
Using the notation of Corollary \ref{cor:asymp dim special}, we get $a= 0, b = c,  c_1 = \cdots =  c_b = 1$, and thus $A = 1$ as well as $M = 1$ and $l = c-1$. Furthermore, we have $N = c-1, \ g_0 (t) = 1, \ e_0 = c$, and $g_j (t) = (-1)^j [\binom{c}{j} (1-t)^{c-j} - \binom{c-1}{j-1}], \ e_j = 0$ if $1 \le j \le c-1$. It follows that
\[
- A j - e_j = -j - e_j = \begin{cases}
-c & \text{if } j = 0, \\
-j & \text{if }  1 \le j < c.
\end{cases}
\]
Hence we obtain $B = c-1$ and
\[
\sum_{j =0 \atop A j + e_j = - B + a + \sum_{j = 1}^b c_j}^N g_j (1) \cdot M^j = g_1 (1) = 1 \neq 0.
h\]
Thus, Corollary \ref{cor:asymp dim special} is applicable and correctly gives $\dim K[X_n]/I_n = n+c-1$ as well as $\lim_{n \to \infty} \frac{\deg I_n}{n^{c-1}} = \frac{1}{(c-1)!}$ for $n \gg 0$.
\end{ex}

Recall that the bigraded Hilbert series of an $\Inc^i$-invariant or $\Sym (\infty)$-invariant ideal $I$ of $K[X]$ is defined by using its saturated filtration, that is,
\[
H_{K[X]/I} (s, t) = \sum_{n \ge 0,\, j \ge 0} \dim_K [K[X_n]/(I \cap K[X_n])]_j \cdot s^n t^j.
\]

\begin{cor}
   \label{cor:Hilb ideal}
Let $I \subset K[X]$ be an   $\Inc^i$-invariant or $\Sym (\infty)$-invariant ideal. Then its
 bigraded Hilbert series is rational and can be written as in Proposition \ref{prop:main thm}.
\end{cor}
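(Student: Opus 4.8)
The plan is to reduce the statement directly to Proposition~\ref{prop:main thm} and Theorem~\ref{thm:main thm sym} by exhibiting an invariant filtration whose bigraded Hilbert series is literally $H_{K[X]/I}(s,t)$. The natural candidate is the saturated filtration of $I$, namely $\Ic = (I \cap K[X_n])_{n \in \N}$, introduced in Section~\ref{sec:filt}. By the very definition of the bigraded Hilbert series of an ideal, one has $H_{K[X]/I}(s,t) = H_{\Ic}(s,t)$, so it suffices to check that $\Ic$ is an invariant filtration of graded ideals of the appropriate type, and then quote the corresponding rationality result.

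First I would treat the $\Inc^i$-invariant case. For each $n$, the ideal $I \cap K[X_n]$ is a graded ideal of $K[X_n]$ since $I$ is graded and $K[X_n] \subset K[X]$ is a graded subring. If $m \le n$ and $\pi \in \Inc^i_{m, n}$, then $\pi$ maps $K[X_m]$ into $K[X_n]$ (as $\pi$ is increasing and $\pi(m) \le n$) and $\pi(I) \subseteq I$ by $\Inc^i$-invariance, so $\pi(I \cap K[X_m]) \subseteq I \cap K[X_n]$. Hence $\Ic$ is an $\Inc^i$-invariant filtration of graded ideals, and Proposition~\ref{prop:main thm} applies to give the asserted rational form.

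For the $\Sym(\infty)$-invariant case I would invoke Lemma~\ref{lem:comp orbits}: every $\Sym(\infty)$-invariant ideal of $K[X]$ is also $\Inc$-invariant. Consequently its saturated filtration is an $\Inc$-invariant filtration of graded ideals, and the previous paragraph (with $i = 0$) already yields the conclusion; alternatively one may appeal directly to Theorem~\ref{thm:main thm sym} applied to this filtration.

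There is essentially no obstacle: all the real work is in Proposition~\ref{prop:main thm}, Theorem~\ref{thm:main thm sym}, and the discussion of saturated filtrations in Section~\ref{sec:filt}. The only point requiring (routine) care is confirming that the $n$-th term of the saturated filtration is exactly $I \cap K[X_n]$, so that $H_{\Ic}(s,t)$ coincides with the defining expression $\sum_{n,j} \dim_K [K[X_n]/(I \cap K[X_n])]_j \, s^n t^j$; this is immediate from the definitions.
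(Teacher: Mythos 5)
Your proposal is correct and follows essentially the same route as the paper: the paper's proof also observes that the saturated filtration $\Ic = (I \cap K[X_n])_{n \in \N}$ is an $\Inc^i$-invariant (respectively $\Sym(\infty)$-invariant) filtration and then cites Proposition \ref{prop:main thm} and Theorem \ref{thm:main thm sym}. Your additional verification that $\pi(I \cap K[X_m]) \subseteq I \cap K[X_n]$ and the reduction of the $\Sym(\infty)$ case to the $\Inc$ case via Lemma \ref{lem:comp orbits} are just the routine details the paper leaves implicit.
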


\begin{proof}
If $I \subset K[X]$ is an   $\Inc^i$-invariant or $\Sym (\infty)$-invariant ideal, then the sequence $\Ic = (I \cap K[X_n])_{n \in \N}$ is an $\Inc^i$-invariant or $\Sym (\infty)$-invariant filtration, respectively. Thus, Proposition \ref{prop:main thm}  and Theorem \ref{thm:main thm sym} prove the assertions.
\end{proof}

\begin{rem}
Consider an   $\Inc^i$-invariant or $\Sym (\infty)$-invariant ideal $I \subset K[X]$ and its saturated filtration $\Ic = (I_n)_{n \in \N}$.  Theorem \ref{thm:asymp dim} shows that the growth of the  dimensions of  $K[X_n]/I_n$ is dominated by an integer $A$ and the growth of the degrees of $I_n$ is dominated by a rational number, which is in fact a certain limit. This suggest to  define the dimension of $K[X]/I$ to be $A$ and the degree of $I$ to be the mentioned limit.
\end{rem}

\appendix
\section{}

Here we establish some technical results that are used in the body of the paper.

\begin{lem}
   \label{lem:pol coefficients}
Let $f_1 (t),\ldots,f_m (t) \in \R[t]$ be $m > 0$ distinct polynomials, and let $b_1,\ldots,b_m$ be any positive integers. Then
$\frac{1}{\prod_{j=1}^m [1 - f_j (t) s]^{b_j}} \in \R[t]\llbracket s \rrbracket$
admits a representation
\[
\frac{1}{\prod_{j=1}^m [1 - f_j (t) s]^{b_j}} = \sum_{n \ge 0} q_n (t) s^n,
\text{ where  } q_n (t)\in \R[t].
\]
\end{lem}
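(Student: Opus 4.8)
The plan is to work in the formal power series ring $\R[t]\llbracket s \rrbracket$, whose elements are by definition exactly the series $\sum_{n \ge 0} q_n(t) s^n$ with $q_n(t) \in \R[t]$. First I would recall the standard fact that in a power series ring over a commutative ring an element is a unit precisely when its $s^0$-coefficient is a unit in the coefficient ring. Applying this, since the $s^0$-coefficient of $1 - f_j(t) s$ is $1$, each factor $1 - f_j(t) s$ is a unit in $\R[t]\llbracket s \rrbracket$; hence so is the finite product $\prod_{j=1}^m [1 - f_j(t) s]^{b_j}$, and therefore its inverse is a well-defined element of $\R[t]\llbracket s \rrbracket$. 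This already yields the asserted representation.

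For a more explicit description I would instead expand each factor using the negative binomial theorem,
\[
\frac{1}{[1 - f_j(t) s]^{b_j}} = \sum_{n \ge 0} \binom{n + b_j - 1}{b_j - 1} f_j(t)^n s^n \in \R[t]\llbracket s \rrbracket,
\]
and then take the Cauchy product of the resulting $m$ series. The coefficient of $s^n$ comes out as
\[
q_n(t) = \sum_{n_1 + \cdots + n_m = n,\ n_j \in \N_0} \ \prod_{j=1}^m \binom{n_j + b_j - 1}{b_j - 1} f_j(t)^{n_j},
\]
which is a finite sum of products of polynomials, hence an element of $\R[t]$.

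I do not expect a genuine obstacle here: the distinctness hypothesis on the $f_j(t)$ plays no role in this statement (it becomes relevant only later, when a partial fraction decomposition is performed), and the assertion reduces to the elementary fact that a power series with unit constant term is invertible, together with the observation that the Cauchy product of power series with coefficients in $\R[t]$ again has coefficients in $\R[t]$. The only point needing any care is the bookkeeping in the explicit formula for $q_n(t)$, which is routine.
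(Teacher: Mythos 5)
Your proof is correct and the explicit part — expanding each factor via the negative binomial series and forming the Cauchy product — is exactly the argument the paper gives, with your formula for $q_n(t)$ just making the paper's "sorting by powers of $s$" explicit. Your preliminary remark that any power series with unit constant term is invertible, and your observation that distinctness of the $f_j$ is irrelevant here, are both accurate but add nothing beyond the paper's route.
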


\begin{proof}
The binomial series shows that, for all integers $b \ge 0$, the inverse of  $[1 - f_j (t) s]^b$  in the ring of formal power series in $s$ with coefficients in $\R[t]$  is
\[
\frac{1}{[1 - f_j (t) s]^b}=
\sum_{n \ge 0} \binom{n+b -1}{b- 1} f_j (t)^n s^n.
\]
Using this fact and sorting by powers of s, the claim follows.
\end{proof}

Retaining its notation and assumptions, we use Lemma \ref{lem:pol coefficients} to establish the following result.

\begin{lem}
   \label{lem:app key power series}
Consider polynomials $r_{j, k} (t) \in \R[t]$, where $1 \le j \le m, \ 1 \le k \le b_j$ such that, for each $j \in [m]$, the polynomial $r_{j, b_j} (t)$ is not the zero polynomial. Assume that, for some $a \in \R$,
\[
f_1 (a) = \cdots = f_m (a) = M \neq 0.
\]
Then there are non-negative integers $\delta$ and $L$ and polynomials $q_n (t) \in \R[t]$ such that
\[
\sum_{j=1}^m \sum_{k = 1}^{b_j} \frac{r_{j, k} (t)}{[1 - f_j (t) s]^k} = \sum_{n \ge 0} (a-t)^{\delta} q_n (t) s^n
\]
and  $q_n (a) \neq 0$ for all $n \gg 0$. Moreover, the limit of $\frac{q_n (a)}{M^n n^L}$ as $n \to \infty$  exists and is equal to a non-zero rational number.
More precisely,  let $h (s, t) \in \R[s, t]$ be the polynomial determined by
\begin{equation}
\label{eq:def h}
\sum_{j=1}^m \sum_{k = 1}^{b_j} \frac{r_{j, k} (t)}{[1 - f_j (t) s]^k}
=
\frac{h(s, t)}{\prod_{j=1}^m [1 - f_j (t) s]^{b_j}},
\end{equation}
and write
$h (s, t) = (a - t)^{\delta} \cdot  \widetilde{h} (s, t)$,
where $\widetilde{h} (s, t) \in \R[s, t]$
such that
\begin{equation}
    \label{eq:notation hn}
0 \neq \widetilde{h} (s, a) = \sum_{n = 0}^N h_n s^n,
\text{ where } h_n\in \R.
\end{equation}
Set $b = b_1 + \cdots b_m$. Then the polynomial
\begin{equation}
    \label{eq:def H}
\sum_{j = 0}^N \binom{y+b-1-j}{b-1} h_j M^{N-j}  = \sum_{j=0}^N H_j y^j \in \R[y]
\end{equation}
with $H_j \in \R$ is not the zero polynomial. Let $L$ be its degree. Then
\[
\lim_{n \to \infty} \frac{q_n (a)}{M^n n^L} = \frac{H_L}{M^N}.
\]
\end{lem}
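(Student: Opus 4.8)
The plan is to put all summands over the common denominator $\prod_{j=1}^m [1-f_j(t)s]^{b_j}$, expand via binomial series, and control everything through the $s$-degree of the resulting numerator. First I would invoke Lemma \ref{lem:pol coefficients} to write $\prod_{j=1}^m [1-f_j(t)s]^{-b_j} = \sum_{n\ge 0} Q_n(t)\,s^n$ with $Q_n(t)\in\R[t]$, and set $b=b_1+\cdots+b_m$. Clearing denominators in \eqref{eq:def h} produces the explicit numerator
\[
h(s,t)=\sum_{j=1}^m\sum_{k=1}^{b_j} r_{j,k}(t)\,[1-f_j(t)s]^{b_j-k}\prod_{j'\neq j}[1-f_{j'}(t)s]^{b_{j'}}\ \in\ \R[s,t],
\]
and inspecting this formula shows $\deg_s h(s,t)\le b-1$, since each summand has $s$-degree $(b_j-k)+\sum_{j'\neq j}b_{j'}=b-k\le b-1$. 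Because $f_j(a)=M\neq 0$ forces $f_j\not\equiv 0$ and the $f_j$ are distinct, the $j$-th block $\sum_k r_{j,k}(t)/[1-f_j(t)s]^k$ has a pole of exact order $b_j$ along the irreducible polynomial $1-f_j(t)s$ of $\R(t)[s]$, its coefficient there being $r_{j,b_j}(t)\not\equiv 0$; distinct $j$ give distinct poles, so these blocks cannot cancel and $h(s,t)\not\equiv 0$. Writing $h(s,t)=(a-t)^\delta\,\widetilde h(s,t)$ with $\delta$ chosen maximal then yields $\widetilde h(s,a)\neq 0$; since multiplication by $(a-t)^\delta\in\R[t]$ preserves $s$-degree and substituting $t=a$ cannot raise it, $N=\deg_s\widetilde h(s,a)\le\deg_s h(s,t)\le b-1$. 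This inequality $N\le b-1$ is the point on which everything turns.

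Next I would read off the coefficients of the expansion. Dividing \eqref{eq:def h} by $(a-t)^\delta$ gives
\[
\sum_{j=1}^m\sum_{k=1}^{b_j}\frac{r_{j,k}(t)}{[1-f_j(t)s]^k}=(a-t)^\delta\,\widetilde h(s,t)\sum_{n\ge 0}Q_n(t)\,s^n=\sum_{n\ge 0}(a-t)^\delta q_n(t)\,s^n,
\]
where $q_n(t)\in\R[t]$ is the convolution of the $s$-coefficients of $\widetilde h(s,t)$ with the sequence $(Q_n(t))_n$; this is the required representation. Evaluating Lemma \ref{lem:pol coefficients} at $t=a$ and using $f_j(a)=M$ for every $j$ gives $\sum_n Q_n(a)\,s^n=(1-Ms)^{-b}$, hence $Q_n(a)=\binom{n+b-1}{b-1}M^n$; and the $s$-coefficients of $\widetilde h(s,t)$ evaluate at $t=a$ to $h_0,\dots,h_N$ from \eqref{eq:notation hn} (and to $0$ above degree $N$). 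Therefore, for every $n\ge N$,
\[
q_n(a)=\sum_{i=0}^{N}h_i\binom{n-i+b-1}{b-1}M^{n-i}=M^{\,n-N}\sum_{i=0}^{N}\binom{n+b-1-i}{b-1}h_i M^{N-i}=M^{\,n-N}P(n),
\]
where $P(y)=\sum_{i=0}^N\binom{y+b-1-i}{b-1}h_i M^{N-i}=\sum_{j=0}^N H_j y^j$ is precisely the polynomial of \eqref{eq:def H}.

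It remains to prove $P\not\equiv 0$, and this is where $N\le b-1$ is used decisively — I expect this to be the only genuinely nontrivial step. I would check that the $b$ polynomials $g_i(y)=\binom{y+b-1-i}{b-1}$, $0\le i\le b-1$, are linearly independent in $\R[y]$: evaluating at $y=0,1,\dots,b-1$ one finds $g_i(\ell)=0$ for $\ell<i$ (the integer $\ell+b-1-i$ then lies between $0$ and $b-2$) while $g_i(i)=\binom{b-1}{b-1}=1$, so the matrix $\bigl(g_i(\ell)\bigr)_{0\le i,\ell\le b-1}$ is triangular with unit diagonal, hence invertible. Since $N\le b-1$, the polynomials $g_0,\dots,g_N$ are linearly independent; as $\widetilde h(s,a)\neq 0$ not all $h_i$ vanish, and $M\neq 0$, we get $P=\sum_{i=0}^N(h_i M^{N-i})\,g_i\neq 0$. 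Writing $L=\deg P$ and $H_L\neq 0$ for its leading coefficient, it follows that $P(n)\neq 0$ for $n\gg 0$, hence $q_n(a)=M^{\,n-N}P(n)\neq 0$ for $n\gg 0$, and
\[
\lim_{n\to\infty}\frac{q_n(a)}{M^n\,n^L}=\lim_{n\to\infty}\frac{P(n)}{M^N\,n^L}=\frac{H_L}{M^N}\neq 0,
\]
which is the asserted value. Everything apart from the $s$-degree bound is routine bookkeeping with binomial series.
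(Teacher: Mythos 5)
Your proof is correct and follows the same overall route as the paper: put everything over the common denominator $\prod_j[1-f_j(t)s]^{b_j}$, extract $\delta$, expand by binomial series, evaluate at $t=a$, and arrive at $q_n(a)=M^{n-N}H(n)$ with $H$ the polynomial of \eqref{eq:def H}. The one step where you genuinely diverge is the non-vanishing of $H$, which is also the only nontrivial point. The paper argues indirectly: since $\deg_s h\le b-1$, the function $G(s,a)=\widetilde h(s,a)/(1-Ms)^b$ is a nonzero \emph{proper} rational function of $s$, hence not a polynomial, hence has infinitely many nonzero Taylor coefficients, so $H(n_0)\neq 0$ for some $n_0\ge N$. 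You instead prove directly that the shifted binomial polynomials $g_i(y)=\binom{y+b-1-i}{b-1}$, $0\le i\le b-1$, are linearly independent via the triangular evaluation matrix at $y=0,\dots,b-1$, and conclude $H=\sum_i (h_iM^{N-i})g_i\neq 0$ from $N\le b-1$. Both arguments hinge on the same degree bound $N\le b-1$, which you make explicit from the clearing-of-denominators formula and the paper leaves largely implicit; your version is more self-contained and constructive, while the paper's is shorter. Your justification that $h(s,t)\not\equiv 0$ (exact pole orders at the pairwise non-associate irreducibles $1-f_j(t)s$ of $\R(t)[s]$, using $r_{j,b_j}\not\equiv 0$ and the distinctness of the $f_j$) likewise fills in a detail the paper only asserts. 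No gaps.
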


\begin{proof}
Since each $r_{j, b_j} (t)$ is not zero, Equation \eqref{eq:def h} shows that $h(s, t)$ is not the zero polynomial. Hence, there is an integer $\delta \ge 0$ such that
\[
h (s, t) = (a - t)^{\delta} \cdot  \widetilde{h} (s, t) \quad \text{ and } \quad \widetilde{h} (s, a) \neq 0.
\]
By Lemma \ref{lem:pol coefficients}, there are polynomials $\widetilde{q}_n (t) \in \R[t]$ such that
\[
\frac{\widetilde{h}(s, t)}{\prod_{j=1}^m [1 - f_j (t) s]^{b_j}} = \widetilde{h}(s, t) \cdot \sum_{n \ge 0} \widetilde{q}_n (t) s^n.
\]
Sorting by powers of $s$, this can be rewritten as
\begin{equation}
    \label{eq:def G}
G(s, t) = \frac{\widetilde{h}(s, t)}{\prod_{j=1}^m [1 - f_j (t) s]^{b_j}} = \sum_{n \ge 0} q_n (t) s^n
\end{equation}
with polynomials $q_n (t) \in \R[t]$. We conclude that
\[
\frac{h(s, t)}{\prod_{j=1}^m [1 - f_j (t) s]^{b_j}} = (a-t)^{\delta} \cdot G(s, t) = \sum_{n \ge 0} (a-t)^{\delta} q_n (t) s^n.
\]
Moreover, Equation \eqref{eq:def G} implies that, for each $t_0 \in \R$, the function  $G(s, t_0)$ can be represented  as a power series  $\sum_{n \ge 0} q_n (t_0) s^n$. 

We use this for $t_0 = a$. Setting $b = b_1 + \cdots + b_m$, our assumption $f_j (a) = M$ gives
\begin{align*}
G(s, a) & = \frac{\widetilde{h}(s, a)}{\prod_{j=1}^m [1 - M s]^{b_j}} =  \frac{\widetilde{h}(s, a)}{[1 - M s]^b} \\
& = \widetilde{h}(s, t) \cdot \sum_{n \ge 0} \binom{n+b-1}{b-1} M^n s^n.
\end{align*}
With the notation introduced in Equation \eqref{eq:notation hn}, this becomes
\[
G(s, a) = \sum_{n \ge 0} \left [ \sum_{j = 0}^{\min \{n, N\}} \binom{n+b-1-j}{b-1} h_j M^{n-j} \right ] s^n.
\]
Comparing coefficients we conclude, for all $n \ge N$,
\begin{equation}
   \label{eq:qn1}
q_n (a) = \sum_{j = 0}^{N} \binom{n+b-1-j}{b-1} h_j M^{n-j} = M^{n-N} \cdot \sum_{j = 0}^{N} \binom{n+b-1-j}{b-1} h_j M^{N-j}.
\end{equation}
Note that, as elements in $\R(t)[s]$, the polynomials $h(s, t)$ and $\widetilde{h} (s, t)$ have the same degree. Thus, Equation \eqref{eq:def h} shows that $0 \neq G(s,a)$ is a proper rational function in $s$. Hence, there is some $n_0 \ge N$ such that $q_{n_0}(a) \neq 0$.

Consider now the polynomial $H(y) \in  \R[y]$ defined by
\[
H(y) = \sum_{j = 0}^N \binom{y+b-1-j}{b-1} h_j M^{N-j}
\]
Comparing with Equation \eqref{eq:qn1}, we conclude, for all $n \ge N$,
\[
q_n (a) = M^{n-N} H(n).
\]
Since $q_{n_0}(a) \neq 0$, the polynomial $H(y)$ is not trivial. It follows that it has only finitely zeros, which implies $H(n) \neq 0$ for all $n \gg 0$. This yields $q_n (a) \neq 0$ for all $n \gg 0$, as desired.

Denote by $L$ the degree of $H(y)$, that is, with the notation introduced in Equation \eqref{eq:def H}
\[
H (y) = \sum_{j=0}^L H_j y^j,
\]
where $H_L \neq 0$. It now follows
\[
\lim_{n \to \infty} \frac{q_n (a)}{M^n n^L} = \frac{1}{M^N} \cdot \lim_{n \to \infty} \left (\sum_{j=0}^L H_j n^{j - L} \right ) =
\frac{H_L}{M^N} \neq 0,
\]
as claimed.
\end{proof}

We also need the following observation:

\begin{lem}
   \label{lem:sum estimate}
Consider real numbers
\[
a_1 = a_2 = \cdots = a_{l+1} > a_{l+2} \ge \cdots \ge a_k > 0.
\]
Then
\[
\lim_{n \to \infty}   \frac{\sum_{n_1 + \cdot + n_k = n, \ n_j \in \N_0} a_1^{n_1} \cdots a_k^{n_k}}{a_1^n \cdot n^l} = \frac{a_1^{k-1-l}}{l! \cdot \prod_{j = l+2}^k (a_1 - a_j)}.
\]
\end{lem}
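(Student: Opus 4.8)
The plan is to identify the numerator $\sum_{n_1+\cdots+n_k=n} a_1^{n_1}\cdots a_k^{n_k}$ with the complete homogeneous symmetric function $h_n(a_1,\ldots,a_k)$ and then reduce everything to a single convolution. Write $a := a_1$. The case $k = l+1$ is immediate: there the sum equals $\binom{n+l}{l}a^n$, the product on the right-hand side is empty, and both sides tend to $\tfrac{1}{l!}$; so I would assume $k \ge l+2$ from now on.

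Splitting the variables into the block consisting of $a_1 = \cdots = a_{l+1} = a$ and the block $a_{l+2},\ldots,a_k$, and using the factorization $\sum_{m\ge0} h_m(X\cup Y)s^m = \big(\sum_{m\ge0}h_m(X)s^m\big)\big(\sum_{p\ge0}h_p(Y)s^p\big)$ of generating functions together with the fact that $h_m$ evaluated at $l+1$ copies of $a$ equals $\binom{m+l}{l}a^m$, I obtain
\[
N_n := \sum_{n_1+\cdots+n_k=n}a_1^{n_1}\cdots a_k^{n_k} = \sum_{m=0}^n\binom{m+l}{l}a^m\,T_{n-m},\qquad T_p := \sum_{n_{l+2}+\cdots+n_k=p}a_{l+2}^{n_{l+2}}\cdots a_k^{n_k}\ \ge\ 0.
\]
Reindexing with $p = n-m$ then gives
\[
\frac{N_n}{a^n n^l} = \sum_{p=0}^n\frac{\binom{n-p+l}{l}}{n^l}\,a^{-p}\,T_p.
\]

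Now I would pass to the limit termwise. For each fixed $p$, $\binom{n-p+l}{l}/n^l\to 1/l!$ as $n\to\infty$; and since $\binom{n+l}{l}$ is a polynomial in $n$ of degree $l$, one has $\binom{n-p+l}{l}/n^l\le\binom{n+l}{l}/n^l\le C$ for some constant $C$ and all $0\le p\le n$. Because $0 < a_j\le a_{l+2} < a$ for every $j\ge l+2$, the series $\sum_{p\ge0}a^{-p}T_p = \prod_{j=l+2}^k(1-a_j/a)^{-1} = \prod_{j=l+2}^k\frac{a}{a-a_j}$ converges, so $C\,a^{-p}T_p$ is a summable majorant. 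By dominated convergence,
\[
\lim_{n\to\infty}\frac{N_n}{a^n n^l} = \frac{1}{l!}\sum_{p\ge0}a^{-p}T_p = \frac{1}{l!}\prod_{j=l+2}^k\frac{a}{a-a_j} = \frac{a^{k-1-l}}{l!\,\prod_{j=l+2}^k(a-a_j)},
\]
which is the asserted formula since $a = a_1$.

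There is no real obstacle in this argument; the only points requiring care are the uniform bound on the convolution coefficients and the convergence of $\sum_p a^{-p}T_p$, both of which rest solely on the strict inequality $a_{l+2} < a_1$ (and on $a_j > 0$). If one prefers to avoid an appeal to dominated convergence, the same conclusion follows from a partial-fraction decomposition of $\prod_{j=1}^k(1-a_js)^{-1}$ over $\R$: the pole at $s = 1/a_1$ has order $l+1$ and leading partial-fraction coefficient $\prod_{j=l+2}^k\frac{a_1}{a_1-a_j}$, so it contributes $\prod_{j=l+2}^k\frac{a_1}{a_1-a_j}\binom{n+l}{l}a_1^n$ to the coefficient of $s^n$, while every remaining contribution is $O(n^{l-1}a_1^n)$ or $O(n^{k-l-2}a_{l+2}^n)$, hence $o(n^l a_1^n)$; dividing by $a_1^n n^l$ gives the same limit.
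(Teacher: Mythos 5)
Your proposal is correct, and your main argument takes a genuinely different (and more self-contained) route than the paper. The paper's proof introduces the generating function $q(t)=\prod_{j=1}^k(1-a_jt)^{-1}$, notes that the numerator of the limit is its $n$-th Taylor coefficient, and reads off the asymptotics from the order-$(l+1)$ pole at $t=1/a_1$ in the partial fraction decomposition, with the remaining details explicitly left to the reader. You instead split the variables into the block of $l+1$ copies of $a_1$ and the strictly smaller block, write the sum as the convolution $\sum_{m}\binom{m+l}{l}a_1^m T_{n-m}$, and pass to the limit term by term using the uniform bound $\binom{n-p+l}{l}/n^l\le C$ and the summability of $\sum_p a_1^{-p}T_p$ (Tannery/dominated convergence); the identity $\sum_p a_1^{-p}T_p=\prod_{j\ge l+2}\frac{a_1}{a_1-a_j}$ then yields the stated constant. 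What your approach buys is a complete, elementary argument that avoids partial fractions and handles the degenerate case $k=l+1$ explicitly; what the paper's approach buys is brevity and a template that generalizes (it is the same partial-fraction machinery used in Lemma A.2 of the appendix). Your closing remark in fact reconstructs the paper's argument, including the correct leading coefficient $\prod_{j=l+2}^k\frac{a_1}{a_1-a_j}$ and the $o(n^l a_1^n)$ estimate for the remaining terms, so you have effectively supplied both proofs.
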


\begin{proof}
Consider the rational function
$
q(t) = \prod_{j=1}^k \frac{1}{1- a_j t}.
$
The geometric series yields the following representation as a power series:
\[
q(t) =  \prod_{j=1}^k \left ( \sum_{n \ge 0} a_j^n t^n \right ) = \sum_{n \ge 0}  \left ( \sum_{n_1 + \cdot + n_k = n, \ n_j \in \N_0} a_1^{n_1} \cdots a_k^{n_k} \right ) t^n.
\]
It follows
\[
\sum_{n_1 + \cdot + n_k = n, \ n_j \in \N_0} a_1^{n_1} \cdots a_k^{n_k}  = \frac{q^{(n)} (0)}{n!}.
\]
The partial fraction decomposition of $q(t)$ has the form
\[
q(t) = \frac{r}{(1-a_1 t)^{l+1}} + \cdots,
\]
where $r = \frac{a_1^{k-1-l}}{\prod_{j = l+2}^k (a_1 - a_j)}$. It yields  a formula for the derivatives of $q(t)$, which implies the claim. We leave the details to the reader.
\end{proof}



\begin{thebibliography}{99}

\bibitem{AH} M.~Aschenbrenner and C.~Hillar, \emph{Finite generation of symmetric ideals}, Trans.\ Amer.\ Math.\ Soc.\ {bf 361} (2007), 5171--5192.

\bibitem{BoH} C.\ Bocci and B.\ Harbourne,
{\em Comparing powers and symbolic powers of ideals},
J.\ Algebraic Geom.\ {\bf 19} (2010), 399--417.

\bibitem{BMMNZ}
M.~Boij, J.~C.~Migliore, R.~M.~Mir\'o-Roig, U.~Nagel, and F.~Zanello, \emph{On the shape of a pure O-sequence}, Mem.\ Amer.\ Math.\ Soc.\ {\bf 218} (2012), no. 1024, viii+78 pp.

 \bibitem{BH} W.~Bruns and  J.~Herzog,
\emph{Cohen-Macaulay rings},
Revised edition, Cambridge Studies Adv. Math.
{\bf 39}, University Press, Cambridge, 1998.

\bibitem{CorsoNagel}
A. Corso and U. Nagel, \emph{Monomial and toric ideals associated to Ferrers graphs},
Trans. Amer. Math. Soc. {\bf 361} (2009), 1371--1395.


\bibitem{CHT}
D.~Cutkosky, J.~Herzog, and N.~V.~Trung, \emph{Asymptotic behaviour of the Castelnuovo-Mumford regularity},
Compositio Math.\ {\bf 118} (1999), 243--261.

\bibitem{Draisma-factor}
J.~Draisma, \emph{Finiteness of the $k$-factor model  and chirality varieties}, Adv.\ Math.\ {\bf 223} (2010), 243--256.


\bibitem{DK}
J.~Draisma and J.~Kuttler, \emph{Bounded-rank tensors are defined in bounded degree}, Duke Math.\ J.\ {\bf 163} (2014), 35--63.


\bibitem{Draisma}
J.~Draisma, \emph{Noetherianity up to symmetry}, In: Combinatorial algebraic geometry, Lecture notes of the CIME-CIRM summer school, Levico Terme, 2013, Lecture Notes in Mathematics {\bf 2108} (2014), pp.  33--61.

\bibitem{EL}
L.~Ein and R.~Lazarsfeld, \emph{Asymptotic syzygies of algebraic varieties},  Invent.\ Math.\ {\bf 190} (2012), 603--646.

\bibitem{EEL}
L.~Ein, D.~Erman,  and R.~Lazarsfeld, \emph{Asymptotics of random Betti tables}, J.\ Reine Angew.\ Math.\ {\bf 702} (2015), 55--75.

\bibitem{H}
G.\ Higman, \emph{Orderings by divisibility in abstract algebras}, Proc.\ London Math.\ Soc.\ (3) {\bf 2} (1952), 326--336.

\bibitem{HS}
C.~J.~Hillar and S.~Sullivant, \emph{Finite Gr\"obner bases in infinite dimensional polynomial rings and applications}, Adv.\ Math. {\bf 229} (2012), 1--25.

\bibitem{K}
V.~Kodiyalam, \emph{Asymptotic behaviour of Castelnuovo-Mumford regularity}, Proc.\ Amer.\ Math.\ Soc.\ {\bf 128} (2000), 407--411.



\bibitem{SS-12b}
S.\ V.\  Sam and A.\ Snowden, \emph{GL-equivariant modules over polynomial rings in infinitely many variables}, Trans.\ Amer.\ Math. Soc.\ (to appear). arXiv:1206.2233

\bibitem{SS-14}
S.\ V.\  Sam and  A.\ Snowden, \emph{Gr\"obner methods for representations of combinatorial categories}, Preprint, 2014.  arXiv:1409.1670


\bibitem{S}
A.\ Snowden, \emph{Syzygies of Segre embeddings and $\Delta$-modules}, Duke Math.\ J.\ {\bf 162} (2013), 225--277.

\end{thebibliography}
\end{document}